\newcommand{\mysection}[1]{\section{#1}
      \setcounter{equation}{0}}
\newtheorem{theorem}{Theorem}[section]
\newtheorem{lemma}[theorem]{Lemma}
\newtheorem{corollary}[theorem]{Corollary}
\theoremstyle{definition}
\newtheorem{assumption}{Assumption}[section]
\newtheorem{example}{Example}[section]
\theoremstyle{remark}
\newtheorem{remark}{Remark}[section]
\newcommand\bR{\mathbb{R}}
\newcommand\frB{\mathfrak{B}}
\newcommand\frK{\mathfrak{K}}
\newcommand\cK{\mathcal{K}}
\newcommand\cL{\mathcal{L}}
\newcommand\cQ{\mathcal{Q}}
 \newcommand{\sumstar}%
 {\operatornamewithlimits{\sum@\kern-.2em\raise1ex\hbox{*}}}
\begin{document}

\title[Accelerated schemes]{
 Accelerated finite difference 
schemes for second order degenerate
elliptic and parabolic problems in the whole space}

\author[I. Gy\"ongy]{Istv\'an Gy\"ongy}
\address{School of Mathematics and Maxwell Institute,
University of Edinburgh,
King's  Buildings,
Edinburgh, EH9 3JZ, United Kingdom}
\email{gyongy@maths.ed.ac.uk}

\author[N.  Krylov]{Nicolai Krylov}%
\thanks{The work of the second author was partially supported
by NSF grant DMS-0653121}
\address{127 Vincent Hall, University of Minnesota,
Minneapolis,
       MN, 55455, USA}
\email{krylov@math.umn.edu}

\subjclass{65M15, 35J70, 35K65}
\keywords{Cauchy problem, 
finite differences, extrapolation to the limit, 
 Richardson's method }

\begin{abstract}
We give sufficient conditions under which the 
convergence of finite difference approximations 
in the space variable of possibly degenerate second 
order parabolic and elliptic equations can be accelerated 
to any given order of convergence by Richardson's method.  
\end{abstract}

\maketitle

\mysection{Introduction}                   \label{section02.04.06}

This is the third article of a series  
studying a class of finite difference equations,  
related to finite difference approximations in
{\em the space variable\/} 
of second order parabolic and elliptic PDEs in $\bR^d$. 
These PDEs are given on the whole $\bR^d$ in the space variable, 
and may degenerate and become first order PDEs. 
Denote by  $u_h$ the solutions of the finite 
difference equations
corresponding 
to a given grid with mesh-size $h$. 
By 
shifting  the grid 
so that $x$ becomes a grid point we 
define  $u_h$ for all $x\in\bR^d$ 
rather than only at the points of the original grid. 
In \cite{GK1} and \cite{GK2}, 
the first and second articles of the series,   
we focus on the smoothness in $x$ of
$u_h$, rather than their 
convergence for $h\to0$. The main results 
in \cite{GK1} and \cite{GK2} 
give estimates, independent of $h$, for the first order 
derivatives $Du_h$ and for derivatives 
$D^{k}u_h$ in $x$ of any order $k$, respectively. 

In the present paper one of our main concerns is  
the smoothness of the approximations $u_h$ in $(x,h)$.   
In particular, we are interested 
in the convergence of $u_h$,  and their {\em derivatives\/} 
in $x$,  
in the supremum norm, as $h\to0$.  
We give conditions ensuring 
that for any 
given integer $k\geq0$ the approximations $u_h$ admit 
power series expansions up to order $k+1$  
in $h$ near $0$
like
\begin{equation}
                                           \label{08.12.14.1}
u_{h}=\sum_{j=0}^{k}h^{j}u^{(j)}+h^{k+1}r_{h},
\end{equation} 
and such that the coefficients are bounded functions  
of $(t,x)\in[0,T]\times\bR^d$ for fixed  $T>0$ 
in the case of parabolic equations, 
and, with the exception of  $r_{h}$, 
are independent of $h$. This is Theorem   
\ref{theorem 6.25.08.08}, our first result on Taylor's formula 
for $u_h$ in $h$. 
We obtain it by proving first Theorems 
\ref{theorem 08.11.2.1} and 
\ref{theorem 08.11.7.2} 
 below on the solvability of the PDE that is being approximated, 
and of a system of degenerate 
parabolic PDEs, respectively,  for the coefficients   $u^{(j)}$,
$j=0,\dots, k$. 
Of course, $u^{(0)}$ is the true solution
of the corresponding PDE.
The remainder term $r_{h}$  satisfies a  
finite difference equation, with the same difference  
operator appearing in the equation for $u_h$, and we estimate 
$r_{h}$
by making use of the maximum principle enjoyed by this operator. 
This is a standard approach to get power series expansions 
for finite difference approximations in general, 
and it 
works well in many situations, when suitable results 
regarding the equations for the coefficients 
$u^{(j)}$ 
are
available.   In our situation it requires some facts either from the
theory  of diffusion processes or from the theory of degenerate 
parabolic equations. However, we do not use any facts 
from these theories. We prove 
Theorem \ref{theorem 08.11.2.1}, and hence 
Theorem \ref{theorem 08.11.7.2}, relying on results 
on finite difference schemes, obtained in 
\cite{GK2} by 
elementary techniques. 
It is worth saying that since long ago 
finite difference equations were already used
to prove the solvability of partial differential equations
(see, for instance, \cite{La73} and \cite{Li}). Our contribution
lies in considering  degenerate equations.

After establishing the expansions of $u_{h}$ in $h$
not only we obtain the possibility to prove the convergence
of $u_{h}$ to the true solution in the sup norm  as $h\to0$ but also
the possibility to accelerate it to any order
under appropriate assumptions.  
We prove the latter
  by taking linear combinations of finite 
difference approximations corresponding to different 
mesh-sizes. 
 This method is especially effective 
when many of the coefficients in the expansion of 
$u_h$ are zero. 
These results are given by Theorem 
\ref{11.25.11.08} and Corollary \ref{corollary 10.25.11.08}. 
Their counterparts in the elliptic case are presented 
by Corollary \ref{corollary 3.08.02.08}.  

The idea of accelerating the 
convergence of finite difference approximations  
 in the above way  is well-known in numerical analysis.  
It is due to L.F.~Richardson, who showed that it works  
in some cases and demonstrated its usefulness in 
\cite{R} and \cite{RG}. This method is often called 
{\em Richardson's method\/} or {\em extrapolation to the 
limit\/}, and is applied to various types of approximations. 
The reader is referred to the survey papers 
\cite{Br} and \cite{J} for a review  
on the history of the method and on the scope of its 
applicability and to the textbooks 
(for instance,  \cite{Ma} and \cite{MS})
concerning finite difference 
methods and their accelerations.

 We are interested  in approximating 
 in the sup norm  not only
the true solution but also its derivatives.
  Note  that even if the coefficients 
$u^{(j)}$ 
are bounded smooth functions of $(t,x)$, 
the derivatives 
$D^{k}u_{h}$ of $u_h$ in $x$ 
may not admit similar expansions, 
since the derivatives of $r_{h}$ 
  may not be bounded in $h$ near $0$. 
 Note also  that the bounds on 
the sups of $u^{(j)}$ and $r_{h}$ generally depend on $T$,
and  may  grow   exponentially in $T$.
This becomes a big obstacle on the way of
extending our results to the elliptic case. 

Our next result on power series 
expansions, Theorem \ref{theorem 1.25.11.08}, 
improves the previous theorem in two directions. 
It gives 
sufficient conditions such that for 
any given integer $k\geq0$

(a) $D^ku_h$  admits an expansion 
similar to \eqref{08.12.14.1},

(b) the bounds on the coefficients are independent of $T$.  

Having (a) we can approximate the $k$-th derivatives
of the true solution by $D^{k}u_{h}$ with   rate of order $h$
and accelerate the rate
under appropriate assumptions. We can also approximate 
the $k$-th derivatives of the true solution
with finite difference operators in place of $D^{k}$
applied to $u_{h}$, which is more convenient
in applications because it does not require computing
the derivatives of $u_{h}$.

We ensure (a) and (b) by relying heavily on 
derivative estimates, independent of $T$, 
obtained in \cite{GK1} and \cite{GK2} for 
solutions of finite difference equations.  
Property (b) of the expansions allows us to extend 
Theorem \ref{theorem 1.25.11.08} to the elliptic case. 
This extension is  
Theorem \ref{theorem 4.20.11.08}. 

As a consequence of the derivative estimates 
proved in \cite{GK2} we obtain also, see  
Theorem \ref{theorem 17.25.06.07} below,  
estimates, independent 
of $h$ and $T$, for the derivatives of $u_h$ in $x$ {\em and\/} $h$. 
Clearly, Theorem \ref{theorem 17.25.06.07} 
immediately implies Taylor's formula 
for $u_h$ 
in $h$, up to appropriate  
order, with bounded coefficients. It is interesting to 
notice that the converse implication 
does not hold: If for $k\geq1$ the 
function $u_h$ admits a
power series expansion up to order 
$k+1$ in $h$ near 0 with bounded coefficients, 
it does not imply, in general, that the derivative of 
$u_{h}$ in $h$ up to order $k+1$ are bounded functions. 
That is why Theorem \ref{theorem 1.25.11.08}
does not imply Theorem \ref{theorem 17.25.06.07}, and the latter
implies the former only if condition (i) 
in Theorem \ref{theorem 1.25.11.08} is satisfied. 
 Additional 
information on the behaviour of the derivatives of $u_h$ 
in $x$ and $h$ when $h$ is near $0$ is given by 
Theorem \ref{theorem 08.2.19.1}. The corresponding result 
in the elliptic case is Theorem~\ref{theorem 09.24.08.08}.

In this article we  are working with equations in the whole space
having in mind considering equations in bounded smooth domains
in a subsequent article. Still it may be worth noting
that the results of this article
 are applicable to the one dimensional ODE
$$
(1-x^{2})^{2}u''(x)-c(x)u(x)=f(x),\quad x\in(-1,1).
$$
The point is that one need not prescribe any boundary value
of $u$ at the points $\pm1$ and if one considers this equation
on all of $\bR$, the values of
its coefficients and $f$ outside $(-1,1)$ do not affect
the values of $u(x)$ for $|x|<1$.

\mysection{Formulation of the main results
for parabolic equations}

We fix  some numbers $h_{0},T\in(0,\infty)$ and
for  each number $h\in(0,h_{0}]$ 
we consider the integral equation 
\begin{equation}                              \label{equation}
u(t,x)=g (x)
+\int_0^t\big(L_hu(s,x)+f (s,x)\big)\,ds, 
\quad (t,x)\in H_T
\end{equation}
for $u$, where  $ g (x)$ and $ 
f (s,x)$  are
given  real-valued Borel functions of 
$x\in\bR^d$ and  
$(s,x)\in H_T=[0,T]\times\bR^d$,  
respectively, and 
 $L_h$ is a linear operator 
defined by 
\begin{equation}                          \label{1.24.11.08}
L_h\varphi(t,x)=L_h^0\varphi(t,x)
-c(t,x)\varphi(x), 
\end{equation}
\begin{equation}                          \label{08.12.16.1}
L_h^0\varphi(t,x)=\frac{1}{h}
\sum_{\lambda\in\Lambda_{1}}
q_{\lambda}(t,x )\delta_{h,\lambda}\varphi(x) 
+\sum_{\lambda\in\Lambda_{1}} 
p_{\lambda}(t,x )\delta_{h,\lambda}\varphi(x) , 
\end{equation}
for functions $\varphi$ on $\bR^d$. Here 
$\Lambda_{1}$ 
is a finite  subset of  $\bR^d$ 
such that
$0\not\in\Lambda_{1}$,    
$$
\delta_{h,\lambda}\varphi(x )=
\frac{1}{h}(\varphi(x +h\lambda)-\varphi(x )),
\quad\lambda\in\Lambda_{1}, 
$$  
$q_{\lambda}(t,x)\geq0$, $p_{\lambda}(t,x )$,
 and $c(t,x)$  are given 
real-valued Borel functions  of $(t,x)\in H_{\infty}
=[0,\infty)\times\bR^d$
for each 
$\lambda\in\Lambda_{1}$. 
Set $|\Lambda_1|^2=\sum_{\lambda\in\Lambda_1}|\lambda|^2$. 

As usual, we denote
$$
D^{\alpha}=D_{1}^{\alpha_{1}}...D_{d}^{\alpha_{d}},
\quad D_{i}=
\frac{\partial }{\partial x_i},\quad
|\alpha| =\sum_{i}\alpha_i, 
\quad
 D_{ij}=D_iD_j 
$$
for multi-indices $\alpha=(\alpha_1,\dots\alpha_d)$,
$\alpha_{i}\in\{0,1,\dots\}$.  
For smooth $\varphi$  
and integers $k\geq0$ we introduce
$D^{k}\varphi$ as the collection of partial derivatives
of $\varphi$ of order $k$, and define 
$$
|D^{k}\varphi|^{2}=\sum_{|\alpha|=k}|D^{\alpha}
\varphi|^{2},\quad
[\varphi]_{k}
=\sup_{x\in\bR^d}
|D^{k}\varphi(x)|,\quad |\varphi|_{k}=
\sum_{i\leq k}[\varphi]_{i}.
$$

 For functions $\psi_h$ depending on 
$h\in(0, h_0 ]$ 
the notation 
 $
D_h^k\psi_h 
 $  
 means the $k$-th 
derivative of $\psi$ in $h$. 
For Borel measurable bounded functions   
$\psi=\psi (t,x)$ on $  H_T$ 
we write   
$\psi\in \frB^{ m}=\frB^{ m}_{T}$  if, for each $t\in[0,T]$, 
$\psi (t,x)$ is continuous in $ \bR^{d}$
and for
all  multi-indices $\alpha$ with
 $|\alpha|\leq m$  
the generalized functions
$D^{\alpha} \psi (t,x)$  
are  
bounded on  $  H_T$.
In this case we use the notation 
$$
\|\psi\|^{2}_{m}= \sup_{H_{T}} 
\sum_{ |\alpha|\leq m}
| D^{\alpha}\psi (t,x)|^{2}. 
$$
This notation will be also used for functions $\psi$
independent of $t$.

Let $m\geq0$ be a fixed integer.
We make the following assumptions.

\begin{assumption}              \label{assumption 16.12.07.06}
For any $\lambda\in\Lambda_1$, 
 we have  $p_{\lambda},q_{\lambda},c,f , g\in\frB^{m}$
and, for $k=0,...,m$ and some  constants  $M_k$ 
we have
\begin{equation}
                                         \label{3.5.1}
\sup_{H_{T}} \big(
\sum_{\lambda\in\Lambda_{1}}
(|D^{k}q_{\lambda}|^{2}+|D^{k}p_{\lambda}|^{2}\big)
+|D^{k}c|^{2}\big)
\leq M^{2}_{k}.
\end{equation} 

\end{assumption}

 \begin{remark}                 \label{remark 9.19.01.08}
By Theorem 2.3 of  
\cite{GK1} under Assumption \ref{assumption 16.12.07.06}
for each $h\in(0,h_{0}]$,
there exists a unique bounded solution $u_{h}$
of \eqref{equation}, this solution  is
continuous in $H_{T}$, and
 all its derivatives in $x$ up to order
$m$ are bounded. Actually, in Theorem 2.3 of  
\cite{GK1} it is required that the derivatives
of the data up to order $m$ be continuous in $H_{T}$,
but its proof can be easily adjusted to 
include our case (see Remark \ref{remark 08.2.19.1} below).
 
\end{remark}
 
Naturally, we view \eqref{equation} as a
finite difference schemes for the problem  

\begin{equation}                               \label{08.11.2.1}
\frac{\partial}{\partial t}u(t,x)=
\cL u(t,x)+f (t,x), \quad t\in(0,T],\,x\in\bR^d,
\end{equation}
\begin{equation}                               \label{08.11.2.2}
u(0,x)=g (x),\quad x\in\bR^d,
\end{equation}
where 
\begin{equation}                               \label{08.11.3.6}
\cL:=\tfrac{1}{2}
\sum_{\lambda\in\Lambda_1}
\sum_{i,j=1}^dq_{\lambda}\lambda_i\lambda_jD_iD_j
+\sum_{\lambda\in\Lambda_1}
\sum_{i=1}^dp_{\lambda}\lambda_iD_i-c.
\end{equation}

By a solution of \eqref{08.11.2.1}-\eqref{08.11.2.2}
we mean a  bounded continuous 
 function $u(t,x)$ on $H_{T}$, such that 
it belongs to $\frB^{2}$
and satisfies
\begin{equation}                               \label{08.11.2.3}
u(t,x)=g (x)+\int_{0}^{t}[\cL u(s,x)+
f (s,x)]\,ds
\end{equation}
in $H_{T}$ in the sense of generalized functions, that is,
for any $t\in[0,T]$ and $\phi\in C^{\infty}_{0}(\bR^{d})$
$$
\int_{\bR^{d}}\phi(x)u(t,x)\,dx=
\int_{\bR^{d}}\phi(x)g (x)\,dx
+\int_{0}^{t}\int_{\bR^{d}}\phi(-cu+f)(s,x)\,dxds
$$
\begin{equation}                               \label{08.11.7.1}
+\int_{0}^{t}\int_{\bR^{d}}\phi
\sum_{\lambda\in\Lambda_1}\big(\tfrac{1}{2}
\sum_{i,j=1}^d 
q_{\lambda} \lambda_{i}\lambda_{j}D_{i}D_{j}u+ \sum_{i=1}^d
p_{\lambda}\lambda_{i}D_{i}u\big)(s,x)\,dxds.
\end{equation}
Observe that if $u\in\frB^{2}$, then \eqref{08.11.7.1}
implies that \eqref{08.11.2.3} holds almost everywhere
with respect to $x$ and if $u\in\frB^{3}$
then the second derivatives of $u$ in $x$ are continuous in $x$
and \eqref{08.11.2.3} holds everywhere.

The reader can find in \cite{Kr08} a discussion showing that
in all practically interesting cases of parabolic
equations like \eqref{08.11.2.3} the operator $\cL$ can be represented
as in \eqref{08.11.3.6}, so that considering operators $L_{h}^{0}$
in form \eqref{08.12.16.1} is rather realistic.

The 
following theorem on
existence and uniqueness of solutions
is a classical result (see, for instance, 
\cite{Ol1}, \cite{Ol}, \cite{OR}) which we are going to obtain
by using finite-difference approximations.

\begin{theorem}                     \label{theorem 08.11.2.1}
Let Assumption \ref{assumption 16.12.07.06} 
hold with $m\geq2$. Then equation 
\eqref{08.11.2.3} has a unique solution 
$u^{(0)}\in\frB^2=\frB^2_{T}$. 
Moreover, $u^{(0)}\in\frB^m_{T}$ and 
\begin{equation}
                                     \label{08.11.3.2}
\|u^{(0)}\|_m\leq N(\|f \|_m+\|g \|_m), 
\end{equation} 
where $N$ is a constant, depending only on 
$d$, $m$,   $ | \Lambda_1 |  $,   
$M_{ 0}$,\dots, $M_m$, and
 $T$.
\end{theorem}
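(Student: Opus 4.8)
The plan is to obtain the solution $u^{(0)}$ of the degenerate PDE \eqref{08.11.2.3} as a limit of the finite difference approximations $u_h$ solving \eqref{equation}, exploiting the $h$-independent derivative estimates from \cite{GK1}, \cite{GK2}. The key point is that although the operator $L_h$ only approximates $\cL$ to first order in $h$ in general, the derivative bounds on $u_h$ are uniform in $h$, so a compactness/Arzel\`a--Ascoli argument yields a limit that is smooth enough to be a classical-in-the-generalized-sense solution.

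\textbf{Step 1 (uniform estimates).} By Remark \ref{remark 9.19.01.08}, for each $h\in(0,h_0]$ there is a unique bounded continuous $u_h$ solving \eqref{equation}, with derivatives in $x$ up to order $m$ bounded. The essential input is the quantitative estimate from \cite{GK1}, \cite{GK2}: under Assumption \ref{assumption 16.12.07.06} with the given $m\geq2$, one has
\begin{equation*}
\sup_{h\in(0,h_0]}\|u_h\|_m\leq N(\|f\|_m+\|g\|_m)
\end{equation*}
with $N$ depending only on $d$, $m$, $|\Lambda_1|$, $M_0,\dots,M_m$, and $T$. Because $L_h u_h$ is then also bounded uniformly in $h$ (using $m\geq2$ to control the second-order-type finite differences by $\|u_h\|_2$ and the coefficient bounds), the integral equation \eqref{equation} shows that $u_h$ is also equicontinuous in $t$, uniformly in $h$, on $H_T$.

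\textbf{Step 2 (passage to the limit).} I would pick a sequence $h_n\downarrow 0$ and apply Arzel\`a--Ascoli together with a diagonal argument over an exhaustion of $\bR^d$ by balls to extract a subsequence along which $u_{h_n}$ and all its $x$-derivatives up to order $m-1$ converge locally uniformly on $H_T$ to a limit $u^{(0)}$ and its derivatives; the $m$-th derivatives converge weakly-$*$, so $u^{(0)}\in\frB^m$ with $\|u^{(0)}\|_m\leq\liminf\|u_{h_n}\|_m$, giving \eqref{08.11.3.2}. To see that $u^{(0)}$ solves \eqref{08.11.7.1}, test \eqref{equation} against $\phi\in C_0^\infty(\bR^d)$: the term $\tfrac1h\sum_\lambda q_\lambda\delta_{h,\lambda}u_h$, after moving one difference quotient onto $\phi q_\lambda$ by summation by parts (discrete integration by parts), converges to $\tfrac12\sum_\lambda\sum_{i,j}q_\lambda\lambda_i\lambda_j D_{ij}u^{(0)}$ — this is the standard Taylor expansion $\delta_{h,\lambda}\varphi = \sum_i\lambda_i D_i\varphi + O(h)$ applied twice — while the zeroth-order term $\sum_\lambda p_\lambda\delta_{h,\lambda}u_h$ converges to $\sum_\lambda\sum_i p_\lambda\lambda_i D_i u^{(0)}$, and the lower-order terms pass to the limit trivially. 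This yields \eqref{08.11.7.1}, hence existence.

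\textbf{Step 3 (uniqueness).} For uniqueness in $\frB^2$, suppose $v$ is the difference of two solutions, so $v\in\frB^2$, $v(0,\cdot)=0$, and $\partial_t v = \cL v$ in the generalized sense. Since $\cL$ in \eqref{08.11.3.6} has the form $\tfrac12\sum_{ij}a^{ij}D_{ij}+\sum_i b^i D_i - c$ with $a=\tfrac12\sum_\lambda q_\lambda\lambda\lambda^{\mathsf T}\geq0$ bounded and $b$, $c$ bounded, one argues by an energy/maximum-principle estimate: mollify $v$, test against $v$ itself (or use the bound on $[v]_2$ to justify $\int \phi\,\partial_t v = \int\phi\,\cL v$ with $\phi$ approximating a Gaussian), and derive $\sup_{\bR^d}|v(t,\cdot)|\leq C\int_0^t\sup_{\bR^d}|v(s,\cdot)|\,ds$ via Gr\"onwall, forcing $v\equiv0$. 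Alternatively, and more in the spirit of this paper, uniqueness follows because \emph{any} $\frB^2$ solution is itself a limit of its own finite difference approximations up to an error controlled by the consistency of $L_h$ with $\cL$ on $\frB^3$ functions — but since we only assume $\frB^2$, the direct energy argument is cleaner.

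\textbf{Main obstacle.} The delicate point is the degeneracy: we have no ellipticity, so we cannot invoke PDE regularity theory, and the limit $u^{(0)}$ is only as smooth as the uniform estimates on $u_h$ permit. Everything therefore rests on the $h$-independent derivative bounds of \cite{GK1}, \cite{GK2} — these are what make the Arzel\`a--Ascoli extraction produce a $\frB^m$ limit rather than merely a distributional one — and on being careful that the discrete summation by parts and the Taylor expansion of $\delta_{h,\lambda}$ are justified using only the bounds we actually have (order $m\geq2$ suffices precisely because the second-order term in $\cL$ needs $D^2u^{(0)}$ bounded). A secondary technical nuisance is justifying the generalized-function identity \eqref{08.11.7.1} in the limit uniformly on $[0,T]$, which the equicontinuity in $t$ from Step 1 handles.
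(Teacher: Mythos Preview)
Your outline has the right architecture --- extract a limit of $u_h$ via uniform bounds and compactness --- but Step~1 claims more than \cite{GK1}, \cite{GK2} actually deliver. The $h$-independent estimate $\|u_h\|_m \leq N(\|f\|_m+\|g\|_m)$ is Theorem~\ref{theorem 4.7.2} here, and it requires not just Assumption~\ref{assumption 16.12.07.06} but also Assumptions~\ref{assumption 1.26.11.06} through~\ref{assumption 18.12.07.06}: positivity of $\chi_{h,\lambda}$, a lower bound $c\geq c_0>0$, and the structural inequalities \eqref{3.24.1}--\eqref{3.24.01}. Under Assumption~\ref{assumption 16.12.07.06} alone, Remark~\ref{remark 9.19.01.08} gives bounded derivatives for each fixed $h$, but with bounds that may blow up as $h\to0$. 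There is a second, related problem in Step~2: without $\sum_\lambda \lambda q_\lambda=0$ (Assumption~\ref{assumption 14.23.01.08}), the Taylor expansion of $h^{-1}\sum_\lambda q_\lambda\delta_{h,\lambda}\varphi$ retains a term $h^{-1}\sum_\lambda q_\lambda\partial_\lambda\varphi$, so $L_h$ does not even formally converge to $\cL$, and your claimed limit for the second-order piece is unjustified.

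The paper closes both gaps by modifying the scheme without changing $\cL$: it replaces $\Lambda_1$ by $\hat\Lambda_1=\Lambda_1\cup(-\Lambda_1)$ and symmetrizes $q_\lambda$ (so condition~(S), hence Assumption~\ref{assumption 14.23.01.08}, holds automatically and $\hat L_h\to\cL$), shifts $p_\lambda$ by the constant $M_0$ so that $\hat p_\lambda\geq 0$ (forcing Assumption~\ref{assumption 1.26.11.06}), and uses the substitution $u\mapsto u e^{\lambda t}$ to make $c$ arbitrarily large. With these modifications Remark~\ref{remark 07.10.17.4} (via \cite{GK1}, \cite{GK2}) guarantees Assumptions~\ref{assumption 11.22.11.06} and~\ref{assumption 07.10.16.1}, and Theorem~\ref{theorem 17.25.06.07} then supplies the uniform-in-$h$ bound you need. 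After that, your compactness and weak-limit argument goes through essentially as written, and your uniqueness sketch is in the same spirit as the energy argument the paper gives in Section~\ref{section 08.12.17.1}.
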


Observe that this result is rather sharp in what concerns the smoothness
of solutions, which is seen if all the coefficients of $\cL$
are identically zero and $f$ is independent of $t$ in which case
the solution is $tf(x)+g(x)$.

The existence part in Theorem \ref{theorem 08.11.2.1} is proved
in Section \ref{section 1.25.11.08}
 and uniqueness in Section \ref{section 08.12.17.1}. 

In Section \ref{section 1.25.11.08} a 
repeated application  of this theorem allows us
to prove  
 a result on the solvability of   
\eqref{08.11.2.6} below.
First introduce 
\begin{equation}                        \label{08.11.2.5}
\cL^{(i)}:=
\tfrac{1}{(i+1)(i+2)}\sum_{\lambda\in\Lambda_1}
q_{\lambda}\partial_{\lambda}^{i+2}
+\tfrac{1}{i+1}\sum_{\lambda\in\Lambda_1}
p_{\lambda}\partial_{\lambda}^{i+1}, 
\end{equation}
  where 
\begin{equation}                             \label{08.12.16.2}
\partial_{\lambda}\varphi
:=\sum_{i}\lambda_iD_i\varphi
\end{equation}
is the derivative of $\varphi$ 
in the direction of $\lambda$. 
Consider the system of equations
\begin{equation}                             \label{08.11.2.6}
u^{(j)}(t,x)=
\int_{0}^{t}\big(\cL u^{(j)}(s,x)
+\sum_{i=1}^{j}C^{i}_{j}\cL^{(i)}u^{(j-i)}(s,x)
\big)\,ds,
\end{equation}
$(t,x)\in H_T$, $j=1,\dots,k$.
\begin{remark}
                                       \label{remark 08.11.6.1}
Quite often in the article we use the following 
symmetry condition: 
\smallskip
 
 (S) $\Lambda_{1}=-\Lambda_{1}$ and
$q_{\lambda}=q_{-\lambda}$ 
for all $\lambda\in\Lambda_1$.
\smallskip

\noindent
Notice that, if condition (S) holds, then 
$$
h^{-1}\sum_{\lambda\in\Lambda_{1}}q_{\lambda}(t,x)
\delta_{h,\lambda}\varphi(x)
=(1/2)\sum_{\lambda\in\Lambda_{1}}q_{\lambda}(t,x)
\Delta_{h,\lambda}\varphi(x),
$$
where
$$
\Delta_{h,\lambda}\varphi(x)
=h^{-2}
(\varphi(x+h\lambda)-2\varphi(x)+\varphi(x-h\lambda)). 
$$
\end{remark}

\begin{theorem}                     \label{theorem 08.11.7.2}
Let $k\geq1$ be an integer.
(i) If Assumption \ref{assumption 16.12.07.06} is 
satisfied with $m\geq 3k+2$, 
then \eqref{08.11.2.6} has 
a unique solution $\{u^{(j)}\}_{j=1}^k$, such that
\begin{equation}                             \label{08.11.7.3}
u^{(j)}\in \frB^{m-3j} ,\quad
\| u^{(j)}\|_{m-3j}\leq N(\|f\|_m+\|g\|_m)
\end{equation}
for $j=1,\dots,k$. 

(ii) If the symmetry condition (S) holds 
and Assumption
\ref{assumption 16.12.07.06} is 
satisfied with $m\geq 2k+2$, then 
\eqref{08.11.2.6} has 
a unique solution $\{u^{(j)}\}_{j=1}^k$, such that  
\begin{equation}                          \label{08.11.7.4} 
u^{(j)}\in \frB^{m-2j} , 
\quad 
\| u^{(j)}\|_{m-2j}\leq N(\|f\|_m+\|g\|_m)
\end{equation} 
for $j=1\dots,k$. 
In addition, if
\begin{equation}                          \label{08.11.7.5}
p_{-\lambda}=-p_{\lambda}, 
\quad \text{for $\lambda\in\Lambda_1$}, 
\end{equation} 
then  
\begin{equation}                          \label{08.11.7.7}
u^{(j)}=0,
\end{equation}
for odd numbers $j\leq k$. 

In all cases the constants $N$ depends only on $d$, $m$,   
$ | \Lambda_1 | $,  
$M_{ 0} , \dots,M_m$, and $T$.
\end{theorem}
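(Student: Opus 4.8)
The plan is to prove both parts by induction on $j$, treating the equation for $u^{(j)}$ in \eqref{08.11.2.6} at each step as a special case of \eqref{08.11.2.3} --- with the already-constructed lower-order functions supplying the source term and with zero initial data --- and then applying Theorem \ref{theorem 08.11.2.1}. All the analytic content is packaged in that theorem; the rest is bookkeeping of smoothness indices together with two elementary parity cancellations.

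For (i), the case $j=0$ is Theorem \ref{theorem 08.11.2.1} itself. Assuming $u^{(0)},\dots,u^{(j-1)}$ have been found with $u^{(i)}\in\frB^{m-3i}$ and $\|u^{(i)}\|_{m-3i}\le N(\|f\|_{m}+\|g\|_{m})$, I would note that $\cL^{(i)}$ in \eqref{08.11.2.5} is a differential operator of order $i+2$ whose coefficients are among the $q_{\lambda},p_{\lambda}\in\frB^{m}$, so by Leibniz the $i$-th summand $\cL^{(i)}u^{(j-i)}$ lies in $\frB^{m-3(j-i)-(i+2)}=\frB^{m-3j+2(i-1)}\subseteq\frB^{m-3j}$ for $i\ge1$, with $\frB^{m-3j}$-norm controlled by a constant of the required form (depending only on $d$, $m$, $|\Lambda_1|$, $M_0,\dots,M_m$) times $\|u^{(j-i)}\|_{m-3(j-i)}$. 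Hence the source $f^{(j)}:=\sum_{i=1}^{j}C^{i}_{j}\cL^{(i)}u^{(j-i)}$ lies in $\frB^{m-3j}$ with $\|f^{(j)}\|_{m-3j}\le N(\|f\|_{m}+\|g\|_{m})$. Since $m\ge3k+2$ forces $m-3j\ge2$ for every $j\le k$, Theorem \ref{theorem 08.11.2.1}, applied with $m$ replaced by $m-3j$, source $f^{(j)}$ and initial value $0$, yields a unique $u^{(j)}\in\frB^{m-3j}$ solving \eqref{08.11.2.6} with the asserted bound; as there are only finitely many steps the constants stay of the stated form. Uniqueness of the whole system I would obtain by the same induction: the difference of two solutions solves the homogeneous analogue of \eqref{08.11.2.6} with zero source (the lower-order coefficients being already unique) and zero initial data, hence vanishes by the uniqueness part of Theorem \ref{theorem 08.11.2.1}.

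For (ii), I would first observe that under (S), pairing $\lambda$ with $-\lambda$ in \eqref{08.11.2.5} kills the $q$-part of $\cL^{(i)}$ for every odd $i$ (since $\partial_{-\lambda}^{i+2}=(-1)^{i+2}\partial_{\lambda}^{i+2}$ while $q_{-\lambda}=q_{\lambda}$), so $\cL^{(i)}$ has order $i+1$ for odd $i$ and order $i+2$ for even $i$; in both cases its order is at most $2i$ when $i\ge1$. The induction of (i) then repeats verbatim with $m-3j$ replaced throughout by $m-2j$: the $i$-th summand of $f^{(j)}$ now has smoothness at least $m-2(j-i)-\mathrm{ord}(\cL^{(i)})\ge m-2j$, and $m\ge2k+2$ gives $m-2j\ge2$ for $j\le k$, yielding $u^{(j)}\in\frB^{m-2j}$ with the stated bound and uniqueness. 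Finally, if in addition \eqref{08.11.7.5} holds, then for odd $i$ the $p$-part of $\cL^{(i)}$ also vanishes ($\partial_{-\lambda}^{i+1}=(-1)^{i+1}\partial_{\lambda}^{i+1}=\partial_{\lambda}^{i+1}$ while $p_{-\lambda}=-p_{\lambda}$), so $\cL^{(i)}\equiv0$ for all odd $i$; only even $i$ then contribute in \eqref{08.11.2.6}, and an induction on odd $j$ gives \eqref{08.11.7.7}: for $j$ odd and $i$ even, $j-i$ is odd, so $u^{(j-i)}=0$ by the inductive hypothesis, the source in the equation for $u^{(j)}$ vanishes, and $u^{(j)}=0$ by uniqueness.

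I expect no genuine obstacle, since all the hard estimates are delegated to Theorem \ref{theorem 08.11.2.1}; the delicate point is purely the accounting --- verifying that the loss of $3$ (respectively $2$) derivatives per induction step is exactly absorbed by $m\ge3k+2$ (respectively $m\ge2k+2$), that the coefficients in $\frB^{m}$ carry enough derivatives for the Leibniz estimates on $\cL^{(i)}u^{(j-i)}$ to land in $\frB^{m-3j}$ (respectively $\frB^{m-2j}$), and that the two parity cancellations under (S) and \eqref{08.11.7.5} are exactly as stated.
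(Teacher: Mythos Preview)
Your proposal is correct and follows essentially the same route as the paper: induction on $j$, invoking Theorem \ref{theorem 08.11.2.1} at each step with the source $S_{j}=\sum_{i=1}^{j}C^{i}_{j}\cL^{(i)}u^{(j-i)}$, the same smoothness bookkeeping $m-3(j-i)-(i+2)\ge m-3j$ in case (i), and the same parity cancellations in case (ii). The only cosmetic difference is that the paper treats $i=1$ and $i\ge2$ separately in case (ii) rather than packaging the bound as $\mathrm{ord}(\cL^{(i)})\le 2i$, and it establishes uniqueness before existence; neither affects the argument.
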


The next series of results is related to 
the possibility of expansion
\begin{equation}                       \label{08.11.2.9}
u_h(t,x)=u^{(0)}(t,x)
+\sum_{1\leq j\leq k}\frac{h^j}{j!}u^{(j)}(t,x)
+ h^{k+1} r_h(t,x), 
\end{equation}
for all $(t,x)\in H_T$ and $h\in(0,h_{0}]$, where 
$u_{h}$ is 
 the  
unique bounded solution 
of \eqref{equation} 
(see Remark \ref{remark 9.19.01.08}) and $r_h$ 
is a function on $H_T$ 
defined for each $h\in(0,h_0]$ such that 
\begin{equation}                       \label{08.11.2.10}
|r_h(t,x)|
\leq N (\|f\|_m+\|g\|_m) 
\end{equation}
for all $(t,x)\in H_T$, $h\in(0,h_{0}]$.

Introduce
$$
\chi_{h,\lambda}=q_{\lambda}+hp_{\lambda}.
$$
\begin{assumption}  
                            \label{assumption 1.26.11.06}   
For all $(t,x)\in H_T$, $h\in(0,h_{0}]$,
and $\lambda\in\Lambda_{1}$,
\begin{equation}                     \label{1.10.02.08}
\chi_{h,\lambda}(t,x )\geq 0.
\end{equation}

\end{assumption}

\begin{assumption}            \label{assumption 14.23.01.08} 
We have 
\begin{equation*}
                                         \label{1.01.02.08}
\sum_{\lambda\in\Lambda_{1}}\lambda q_{\lambda}(t,x )=0
\quad \text{for all $(t,x)\in H_T$}.
\end{equation*}

\end{assumption}
Notice that condition (S) is stronger
than Assumption \ref{assumption 14.23.01.08}.

\begin{theorem}                    \label{theorem 6.25.08.08} 
Let Assumption  \ref{assumption 16.12.07.06} with $m\geq3$
and Assumption \ref{assumption 1.26.11.06} 
hold. 
Let $k\geq0$ be an integer. 
Then 
expansion \eqref{08.11.2.9}  
holds with
$r_{h}$ satisfying 
 \eqref{08.11.2.10}, 
provided one of the following 
  conditions 
is met:

(i) $m\geq3k+3$ and Assumption \ref{assumption 14.23.01.08} 
holds;
    
(ii) $m\geq2k+3$ and condition (S) holds;

(iii) $k$ is odd, $m\geq2k+2$, and conditions (S) 
and \eqref{08.11.7.5} are satisfied. 

\noindent
In each of the cases (i)-(iii) the constant  
  $N$  depends only   on $d$, $m$,   
$ | \Lambda_1 | $, 
  $M_{ 0},\dots,M_m$, and $T$. 
 In case (iii) we have $u^{(j)}=0$ 
for all odd $j$ in expansion 
\eqref{08.11.2.9}.
\end{theorem}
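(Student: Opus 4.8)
The plan is to substitute the Ansatz \eqref{08.11.2.9} into the finite difference equation \eqref{equation} and derive a finite difference equation for the remainder $r_h$ to which the maximum principle enjoyed by $L_h$ applies. First I would Taylor-expand the difference operator $L_h$ acting on a smooth function. Writing $\delta_{h,\lambda}\varphi(x)=\sum_{i\ge1}\tfrac{h^{i-1}}{i!}\partial_\lambda^i\varphi(x)$, one gets a formal expansion $L_h\varphi=\cL\varphi+\sum_{i\ge1}h^i\cL^{(i)}\varphi$ with $\cL^{(i)}$ as in \eqref{08.11.2.5} (this is exactly why the operators $\cL^{(i)}$ were introduced and why the system \eqref{08.11.2.6} has that particular combinatorial form — Leibniz's rule produces the binomial coefficients $C^i_j$). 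Under Assumption \ref{assumption 16.12.07.06} with the indicated smoothness $m$, this expansion is valid with a controlled Taylor remainder, because the coefficients $p_\lambda,q_\lambda$ and the data are smooth enough; under Assumption \ref{assumption 14.23.01.08} the leading $h^{-1}$ term of $L_h^0$ actually starts the expansion at order $h^0$ (the $h^{-1}$ part telescopes away), which is what makes the case-(i) bookkeeping work, and under (S) one gains an extra power of $h$ at each step because odd-order directional derivatives cancel in the symmetrized second-difference operator $\Delta_{h,\lambda}$ of Remark \ref{remark 08.11.6.1}, explaining the improved thresholds in (ii) and (iii).

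Next, plugging \eqref{08.11.2.9} into \eqref{equation} and collecting powers of $h$: the $h^0$ term reproduces \eqref{08.11.2.3}, so $u^{(0)}$ must be the solution furnished by Theorem \ref{theorem 08.11.2.1}; the $h^j$ terms for $1\le j\le k$ reproduce precisely the system \eqref{08.11.2.6}, so $u^{(1)},\dots,u^{(k)}$ are the functions furnished by Theorem \ref{theorem 08.11.7.2} (whose hypotheses on $m$ match — $m\ge 3k+2$ in case (i), $m\ge 2k+2$ in cases (ii)/(iii) — noting we assume one more derivative, $m\ge 3k+3$ or $m\ge 2k+3$, precisely to have the Taylor remainder of the operator land in $\frB^0$ with a bound; case (iii) gives $u^{(j)}=0$ for odd $j$ by \eqref{08.11.7.7}). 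What remains, after cancelling all these terms, is an identity of the form
\begin{equation*}
r_h(t,x)=\int_0^t\bigl(L_h r_h(s,x)+F_h(s,x)\bigr)\,ds,
\end{equation*}
where $F_h$ is an explicit finite linear combination, with coefficients that are powers of $h$ times the $\cL^{(i)}u^{(j)}$'s plus the Taylor remainders of $\delta_{h,\lambda}$ applied to the $u^{(j)}$'s. The crucial point is that, by the derivative bounds \eqref{08.11.3.2} and \eqref{08.11.7.3}/\eqref{08.11.7.4} together with the smoothness margin we have reserved, $\sup_{H_T}|F_h|\le N(\|f\|_m+\|g\|_m)$ with $N$ independent of $h$.

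Finally, I would invoke the maximum principle for the scheme \eqref{equation}: since $q_\lambda\ge0$ and (under Assumption \ref{assumption 1.26.11.06}) $\chi_{h,\lambda}=q_\lambda+hp_\lambda\ge0$, the operator $L_h$ has nonnegative off-diagonal weights, so $r_h$ obeys the same comparison principle used for $u_h$ in \cite{GK1}; comparing $r_h$ with $\pm t\,\sup_{H_T}|F_h|$ — or, more carefully, using that $r_h$ is itself a bounded solution of a scheme of the form \eqref{equation} with free term $F_h$ and zero initial data — and applying the a priori bound of Theorem 2.3 of \cite{GK1} (the same bound quoted in Remark \ref{remark 9.19.01.08}) yields $\sup_{H_T}|r_h|\le N T\,\sup_{H_T}|F_h|\le N(\|f\|_m+\|g\|_m)$, which is \eqref{08.11.2.10}. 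I expect the main obstacle to be the careful bookkeeping of the operator's Taylor expansion with rigorous remainder estimates in each of the three regimes — in particular verifying that in cases (ii) and (iii) the symmetry condition (S) really does produce the claimed extra factor of $h$ at every order so that the reduced smoothness threshold $m\ge 2k+3$ (resp. $m\ge 2k+2$) suffices — rather than the maximum-principle step, which is by now routine given \cite{GK1}.
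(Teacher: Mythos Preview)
Your proposal is correct and follows essentially the same route as the paper: define $w=u_h-\sum_{j\le k}\tfrac{h^j}{j!}u^{(j)}$, show it satisfies \eqref{equation} with zero initial data and free term $F=\sum_{j\le k}\tfrac{h^j}{j!}\mathcal{O}_h^{(k-j)}u^{(j)}$ (the paper packages your Taylor expansion of $L_h$ into the operators $\mathcal{O}_h^{(j)}=L_h-\cL-\sum_{i\le j}\tfrac{h^i}{i!}\cL^{(i)}$ and proves the remainder bound as a separate lemma), estimate $\|F\|_0\le Nh^{k+1}$ via Theorems~\ref{theorem 08.11.2.1} and~\ref{theorem 08.11.7.2}, and conclude by the maximum principle (the paper first reduces to $c\ge1$ so as to invoke its Lemma~\ref{lemma 3.6.1}). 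One small correction: the improved thresholds in cases (ii)/(iii) come not from an extra power of $h$ in the operator expansion but from the better regularity $u^{(j)}\in\frB^{m-2j}$ (rather than $\frB^{m-3j}$) furnished by Theorem~\ref{theorem 08.11.7.2} under (S), together with $u^{(k)}=0$ in case~(iii).
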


We prove this theorem in 
Section \ref{section 5.23.11.08}. 
  The following corollary is one of  
the results of \cite{DK}
proved there by using the theory of diffusion processes. 
We obtain it immediately from case (iii) with $k=1$. 
Of course, the result is well known
for uniformly nondegenerate equations but we do not assume
any nondegeneracy of $\cL$, which becomes just a zero operator at those
points where $q_{\lambda}=p_{\lambda}=c=0$.
 
\begin{corollary}
                                         \label{corollary 09.12.17.1}

Let conditions (S) and \eqref{08.11.7.5}
be satisfied.  Let Assumption  
\ref{assumption 16.12.07.06} with $m=4$
and Assumption \ref{assumption 1.26.11.06} 
 hold. Then  we have $|u_{h}-u_{0}|\leq Nh^{2}$. 
\end{corollary}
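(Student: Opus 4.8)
The plan is to read the claim off Theorem~\ref{theorem 6.25.08.08}, case (iii), applied with $k=1$. First I would check that the hypotheses of that case are met under the assumptions of the corollary: $k=1$ is odd; the requirement $m\ge 2k+2$ reads $m\ge 4$, which is exactly what is assumed; and conditions (S) together with \eqref{08.11.7.5} are assumed outright. The standing hypotheses of Theorem~\ref{theorem 6.25.08.08}, namely Assumption~\ref{assumption 16.12.07.06} with $m\ge 3$ and Assumption~\ref{assumption 1.26.11.06}, also hold, since here $m=4\ge 3$. (Condition (S) implies Assumption~\ref{assumption 14.23.01.08}, though for case (iii) the latter is not needed anyway.)

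Applying the theorem then produces expansion \eqref{08.11.2.9} with $k=1$,
\begin{equation*}
u_h(t,x)=u^{(0)}(t,x)+h\,u^{(1)}(t,x)+h^2 r_h(t,x),\qquad (t,x)\in H_T,\ h\in(0,h_0],
\end{equation*}
together with the estimate $|r_h(t,x)|\le N(\|f\|_m+\|g\|_m)$ on $H_T$, where $N$ depends only on $d$, $m$, $|\Lambda_1|$, $M_0,\dots,M_m$, and $T$. The decisive point is the final assertion of case (iii): since (S) and \eqref{08.11.7.5} hold, $u^{(j)}=0$ for every odd $j\le k$, so in particular $u^{(1)}=0$, and the expansion collapses to $u_h=u^{(0)}+h^2 r_h$.

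To finish I would identify the two objects named in the corollary. By Theorem~\ref{theorem 08.11.2.1} (whose requirement $m\ge 2$ is satisfied), $u^{(0)}$ is the unique solution in $\frB^2_T$ of \eqref{08.11.2.3}, i.e.\ the true solution, which is what $u_0$ denotes; hence $|u_h-u_0|=h^2|r_h|\le Nh^2(\|f\|_m+\|g\|_m)$ on $H_T$. Since $f$ and $g$ are fixed data, the factor $\|f\|_m+\|g\|_m$ may be absorbed into the constant, yielding $|u_h-u_0|\le Nh^2$. There is essentially no obstacle: the only thing requiring a moment's care is confirming that the margin $m=4$ is precisely the threshold $2k+2$ needed to run case (iii) at $k=1$, and that it is the vanishing of $u^{(1)}$ that upgrades the generic $O(h)$ rate of \eqref{08.11.2.9} to the claimed $O(h^2)$.
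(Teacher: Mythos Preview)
Your proposal is correct and follows exactly the paper's own argument: the paper states that the corollary is obtained ``immediately from case (iii) with $k=1$'' of Theorem~\ref{theorem 6.25.08.08}, and you have simply unpacked that sentence by verifying the hypotheses and reading off $u^{(1)}=0$ and the bound on $r_h$.
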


Actually, in \cite{DK} a full discretization in time and space
is considered for parabolic equations, so that, formally,
Corollary \ref{corollary 09.12.17.1} does not yield the
corresponding result of  \cite{DK}. On the other hand,
a similar corollary can be derived from
Theorem \ref{theorem 4.20.11.08} below which treats
elliptic equations and it does imply the corresponding result
of  \cite{DK}. It also generalizes it because in 
\cite{DK} one of the
assumptions,  unavoidable for the methods used there, 
is that 
$ q_{\lambda} =r_{\lambda}^{2}$ with 
functions $r_{\lambda}$
that have four bounded
derivatives in
$x$, which may easily be not the 
case under the assumptions of 
Theorem \ref{theorem 4.20.11.08}.

To formulate our main  result about
acceleration for parabolic equations
 we fix an integer $k\geq0$ and 
set  
\begin{equation}                       \label{11.25.11.08}
\bar u_h=\sum_{j=0}^{k}b_ju_{2^{-j}h}, , 
\end{equation}
where, naturally, $u_{2^{-j}h}$   are 
the solutions to \eqref{equation}, 
with $2^{-j}h$ in place of $h$, 
\begin{equation}                       \label{1.26.11.08}
(b_0,b_1,...,b_k)
:=(1,0,0,...,0)V^{-1}
\end{equation}
and $V^{-1}$ is the inverse of the 
Vandermonde matrix with entries
$$
V^{ij}:=2^{-(i-1)(j-1)}, \quad i,j=1,...,k+1.
$$

 The following result is 
a simple corollary
of Theorem \ref{theorem 6.25.08.08}.

\begin{theorem}                  \label{theorem 1.08.02.08} 
In each situation when Theorem \ref{theorem 6.25.08.08}
is applicable we have that the estimate
\begin{equation}                    \label{2.08.02.08}                
|\bar u_h(t,x)-u^{(0)}(t,x)|\leq N 
 (\|f\|_m+\|g\|_m) h^{k+1}
\end{equation}                  
holds for all $(t,x)\in H_T$, 
$h\in(0,h_0]$, where  
$N$ is a constant depending only on $d$, $m$,   
$ | \Lambda_1 | $, 
  $M_{ 0},\dots,M_m$, and $T$.
\end{theorem}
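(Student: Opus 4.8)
The plan is to reduce the claim directly to the expansion \eqref{08.11.2.9} provided by Theorem \ref{theorem 6.25.08.08}, using the defining property of the Richardson coefficients $b_j$. First I would note that in any of the cases (i)--(iii) of Theorem \ref{theorem 6.25.08.08} we have, for every $h\in(0,h_0]$ and every mesh-size $2^{-j}h$ with $0\le j\le k$,
\begin{equation*}
u_{2^{-j}h}(t,x)=u^{(0)}(t,x)+\sum_{1\le i\le k}\frac{(2^{-j}h)^{i}}{i!}\,u^{(i)}(t,x)+(2^{-j}h)^{k+1}r_{2^{-j}h}(t,x),
\end{equation*}
where the coefficients $u^{(i)}$ do not depend on $j$ and $|r_{2^{-j}h}(t,x)|\le N(\|f\|_m+\|g\|_m)$ uniformly, by \eqref{08.11.2.10}. (In case (iii) the odd coefficients vanish, but this is not needed for the argument.)

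Next I would form $\bar u_h=\sum_{j=0}^{k}b_j u_{2^{-j}h}$ and collect terms by powers of $h$. The coefficient of $h^{i}u^{(i)}/i!$ in $\bar u_h$ is $\sum_{j=0}^{k}b_j 2^{-ji}=\sum_{j=0}^{k}b_j (V^{T})^{i+1,j+1}$ in the notation of the theorem, i.e.\ the $(i+1)$-st entry of the row vector $(b_0,\dots,b_k)V^{T}$. Since $(b_0,\dots,b_k)=(1,0,\dots,0)V^{-1}$ by \eqref{1.26.11.08}, and $V$ is symmetric ($V^{ij}=2^{-(i-1)(j-1)}=V^{ji}$), we get $(b_0,\dots,b_k)V^{T}=(b_0,\dots,b_k)V=(1,0,\dots,0)$. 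Hence the $h^{0}$-term reproduces $u^{(0)}$ exactly, while the coefficients of $h^{i}u^{(i)}$ vanish for $i=1,\dots,k$. Therefore
\begin{equation*}
\bar u_h(t,x)-u^{(0)}(t,x)=h^{k+1}\sum_{j=0}^{k}b_j 2^{-j(k+1)}r_{2^{-j}h}(t,x).
\end{equation*}

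Finally I would estimate the remainder: the sum on the right is bounded in absolute value by $\big(\sum_{j=0}^{k}|b_j|\big)\sup_{h}|r_h|\le N'(\|f\|_m+\|g\|_m)$, where $N'$ absorbs the factor $\sum_j |b_j|$, which is a finite constant depending only on $k$ (hence, given the constraints on $m$ in each case, only on $d,m$). Combining with the bound $2^{-j(k+1)}\le 1$ gives \eqref{2.08.02.08} with $N$ depending only on $d$, $m$, $|\Lambda_1|$, $M_0,\dots,M_m$, and $T$, as claimed. The only mildly delicate point is the linear-algebra identity $(1,0,\dots,0)V^{-1}V^{T}=(1,0,\dots,0)$, which is immediate once one observes that the Vandermonde matrix $V$ here is symmetric; everything else is bookkeeping and the uniform bound \eqref{08.11.2.10} already supplied by Theorem \ref{theorem 6.25.08.08}.
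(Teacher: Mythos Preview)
Your proof is correct and follows essentially the same route as the paper's: apply the expansion of Theorem \ref{theorem 6.25.08.08} to each $u_{2^{-j}h}$, form the combination $\bar u_h$, and use the defining property of $(b_0,\dots,b_k)$ to annihilate the intermediate powers of $h$ and estimate the remainder via \eqref{08.11.2.10}. The only cosmetic difference is that the paper states the identities $\sum_{j}b_j=1$ and $\sum_{j}b_j2^{-ij}=0$ for $i=1,\dots,k$ directly, whereas you recover them by noting that $V$ is symmetric and computing $(b_0,\dots,b_k)V=(1,0,\dots,0)$; either way the argument is the same.
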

\begin{proof}
By  Theorem \ref{theorem 6.25.08.08}
$$
u_{2^{-j}h}=
u^{(0)}+\sum_{i=1}^k\frac{h^i}{i!2^{ji}}
u^{(i)}+ \bar r _{2^{-j}h}
 h^{k+1} ,
\quad j=0,1,...,k, 
$$
with $\bar r_{2^{-j}h}:=
2^{-j(k+1)}r_{2^{-j}h}$\,,
which gives 
$$
\bar u_h=\sum_{j=0}^kb_ju_{2^{-j}h}
=(\sum_{j=0}^{k}b_j)u^{(0)}
+\sum_{j=0}^{k}\sum_{i=1}^kb_j
\frac{h^i}{i!2^{ij}}u^{(i)}
+\sum_{j=0}^kb_j \bar r _{2^{-j}h}
 h^{k+1} 
$$
$$
=u^{(0)}+\sum_{i=1}^k\frac{h^i}{i!}u^{(i)}
\sum_{j=0}^{k}\frac{b_j}{2^{ij}}
+\sum_{j=0}^kb_j \bar  r_{2^{-j}h}
=u^{(0)}+\sum_{j=0}^kb_j \bar r _{2^{-j}h}
 h^{k+1} ,
$$
since 
$$
\sum_{j=0}^{k}b_j=1,\quad  
 \sum_{j=0}^{k} b_j 2^{-ij} =0,\quad  
i=1,2,...k
$$
 by the definition of $(b_0,...,b_k)$. 
Hence, 
$$
\sup_{H_T}|\bar u_h-u^{(0)}|
=\sup_{H_T}|
\sum_{j=0}^kb_j \bar r _{2^{-j}h}| h^{k+1} 
\leq N 
 (\|f\|_m+\|g\|_m) h^{k+1}, 
$$
and the theorem is proved.
 \end{proof}

Sometimes  it suffices  to   
combine fewer terms  $u_{2^{-j}h}$   
to get accuracy of order $k+1$. 
To consider such a case  
for odd integers $k\geq1$ define 
\begin{equation}                      \label{12.25.11.08}
\tilde{u}_{h}=\sum_{j=0}^{\tilde k}
\tilde b_ju_{2^{-j}h}\,,
\end{equation}
where  
\begin{equation}                       \label{9.25.11.08}
( \tilde b_0, \tilde b_1,...,
\tilde b_{\tilde k})
:=(1,0,0,...,0)\tilde V^{-1}, 
\quad \tilde k=\tfrac{k-1}{2}, 
\end{equation}
and  $\tilde V^{-1}$ is the inverse of the 
Vandermonde matrix with entries
$$
\tilde V^{ij}:=4^{-(i-1)(j-1)}, \quad i,j=1,...,\tilde k+1.
$$
 
\begin{theorem}
                                   \label{theorem 08.11.2.3}  

Suppose that the assumptions
of Theorem \ref{theorem 6.25.08.08} are satisfied
and   condition (iii) is  met. 
 Then for $\tilde{u}_{h}$ we have  
$$
\sup_{H_T}|u^{(0)}- \tilde u _h|\leq N 
 (\|f\|_m+\|g\|_m) h^{k+1} 
$$
for all $h\in(0,h_0]$, where $N$ depends only on 
$d$, $m$,   
$ |\Lambda_1|$, 
  $M_{ 0},\dots,M_m$, and~$T$.
\end{theorem}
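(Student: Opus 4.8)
The plan is to mimic the proof of Theorem~\ref{theorem 1.08.02.08} but exploit the extra structure coming from condition~(iii), namely that all the odd coefficients $u^{(j)}$ in the expansion~\eqref{08.11.2.9} vanish. Under the hypotheses of Theorem~\ref{theorem 6.25.08.08}(iii) the index $k$ is odd, $m\geq 2k+2$, and conditions (S) and \eqref{08.11.7.5} hold, so Theorem~\ref{theorem 6.25.08.08} gives, for each $j=0,1,\dots,\tilde k$ with $\tilde k=(k-1)/2$,
$$
u_{2^{-j}h}=u^{(0)}+\sum_{\substack{1\leq i\leq k\\ i\text{ even}}}\frac{h^{i}}{i!\,2^{ji}}u^{(i)}+h^{k+1}\bar r_{2^{-j}h},\qquad \bar r_{2^{-j}h}:=2^{-j(k+1)}r_{2^{-j}h},
$$
since $u^{(i)}=0$ for odd $i$. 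The first step is to rewrite the even part of this sum by setting $i=2\ell$, $\ell=1,\dots,\tilde k$, so that the coefficient of $u^{(2\ell)}$ involves $2^{-j\cdot 2\ell}=4^{-j\ell}$. Thus the relevant interpolation nodes are the powers $4^{-j}$, $j=0,\dots,\tilde k$, which is exactly why the Vandermonde matrix $\tilde V^{ij}=4^{-(i-1)(j-1)}$ appears in \eqref{9.25.11.08}.

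Next I would form the linear combination $\tilde u_h=\sum_{j=0}^{\tilde k}\tilde b_j u_{2^{-j}h}$ and substitute the expansions. Collecting terms,
$$
\tilde u_h=\Big(\sum_{j=0}^{\tilde k}\tilde b_j\Big)u^{(0)}+\sum_{\ell=1}^{\tilde k}\frac{h^{2\ell}}{(2\ell)!}\,u^{(2\ell)}\sum_{j=0}^{\tilde k}\tilde b_j 4^{-j\ell}+h^{k+1}\sum_{j=0}^{\tilde k}\tilde b_j \bar r_{2^{-j}h}.
$$
By the definition \eqref{9.25.11.08} of $(\tilde b_0,\dots,\tilde b_{\tilde k})$ as $(1,0,\dots,0)\tilde V^{-1}$, one has $\sum_{j=0}^{\tilde k}\tilde b_j=1$ and $\sum_{j=0}^{\tilde k}\tilde b_j 4^{-j\ell}=0$ for $\ell=1,\dots,\tilde k$; this is the same Vandermonde identity used in the proof of Theorem~\ref{theorem 1.08.02.08}, only with base $4$ instead of $2$ and with the reduced dimension $\tilde k+1$. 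Hence all the terms $u^{(0)}$ (coefficient $1$) and $u^{(2\ell)}$ (coefficient $0$) combine to leave exactly $u^{(0)}$, and
$$
\tilde u_h-u^{(0)}=h^{k+1}\sum_{j=0}^{\tilde k}\tilde b_j \bar r_{2^{-j}h}.
$$

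Finally I would take supremum over $H_T$ and use the bound \eqref{08.11.2.10} on $r_h$, together with $|\bar r_{2^{-j}h}|=2^{-j(k+1)}|r_{2^{-j}h}|\leq N(\|f\|_m+\|g\|_m)$, to conclude
$$
\sup_{H_T}|u^{(0)}-\tilde u_h|\leq |h|^{k+1}\sum_{j=0}^{\tilde k}|\tilde b_j|\cdot N(\|f\|_m+\|g\|_m)\leq N(\|f\|_m+\|g\|_m)h^{k+1},
$$
where the new $N$ absorbs the fixed finite quantity $\sum_j|\tilde b_j|$, which depends only on $k$ (hence on $m$) and not on $h$ or the data. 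The constant therefore depends only on $d$, $m$, $|\Lambda_1|$, $M_0,\dots,M_m$, and $T$, as claimed. I do not anticipate a genuine obstacle here: the entire content is that condition~(iii) kills the odd coefficients, which halves the number of mesh-sizes needed; the only point requiring a line of care is verifying that the rescaled nodes $4^{-j}$ together with the Vandermonde definition of $\tilde b_j$ indeed annihilate precisely the powers $h^2,h^4,\dots,h^{2\tilde k}=h^{k-1}$, leaving the next surviving power $h^{k+1}$ — which is immediate from the structure of $\tilde V$.
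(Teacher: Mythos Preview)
Your argument is correct and follows exactly the approach the paper indicates: a straightforward modification of the proof of Theorem~\ref{theorem 1.08.02.08}, using that under condition~(iii) the odd coefficients $u^{(j)}$ vanish so that only the even powers $h^{2},\dots,h^{k-1}$ need to be eliminated via the base-$4$ Vandermonde system. You have in fact written out the details that the paper leaves implicit.
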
 

 \begin{proof}
We obtain this result from 
Theorem \ref{theorem 6.25.08.08} by 
a straightforward modification of the proof 
of the previous result, taking into account 
that for odd $j$ the terms with 
$h^j$ vanish in expansion 
\eqref{08.11.2.9} when condition (iii) 
holds in Theorem \ref{theorem 6.25.08.08}. 
\end{proof} 
 
\begin{example}
Assume that in the situation
of Theorem \ref{theorem 08.11.2.3} we have
   $m=8$. Then
$$
 \tilde u _h:=\tfrac{4}{3}u_{h/2}-\tfrac{1}{3}u_h
$$ 
satisfies
$$
\sup_{H_T}|u^{(0)}- \tilde u _h|
\leq N h^{ 4}
$$
for all $h\in(0,h_0]$. 
\end{example}

The above results show that if the data 
in equation \eqref{08.11.2.3} 
are sufficiently smooth, then the order of accuracy 
in approximating the solution $u^{(0)}$ 
can be as high as we wish if we use suitable mixtures 
of finite difference approximations calculated along 
nested grids with different mesh-sizes. 
Assume now that we need to approximate not only 
$u^{(0)}$ but its derivative $D^{\alpha}u^{(0)}$ 
for some multi-index $\alpha$ as well. What 
accuracy can we achieve? 
 The answer is closely related to the question 
whether  the expansion 
\begin{equation}
                                               \label{6.25.11.08}                     
D^{\alpha}u_h(t,x)=D^{\alpha}u^{(0)}(t,x)
+\sum_{1\leq j\leq k}\frac{h^j}{j!}D^{\alpha}u^{(j)}(t,x)
+h^{k+1}D^{\alpha}r_h(t,x) 
\end{equation}
 holds for all $(t,x)\in H_T$ and $h\in(0,h_{0}]$,  such that  
\begin{equation}                       
                                                \label{7.25.11.08}
|D^{\alpha}r_h(t,x)|\leq N (\|f\|_m+\|g\|_m)
\end{equation}
for all $(t,x)\in H_T$, $h\in(0,h_{0}]$.

The result concerning this expansion and 
the  following series of results appeared
after the authors tried to extend the above 
theorems 
from the parabolic  to the elliptic case.
The main and rather hard obstacle is that
the constants in our estimates depend on $T$
and, actually, may grow exponentially in $T$.
By the way, this obstacle
is caused by possible degeneration
of our equations and
exists even if we consider equations
in bounded smooth domain. 

To be able to give some conditions
under which this does not happen, we introduce
new notation and investigate smoothness
properties of $u_{h}$ with respect to $x$.
As a simple byproduct of this investigation
we also obtain smoothness of $u_{h}$ with respect
to $h$, which, by the way, cannot be derived from
\eqref{08.11.2.9}.

Take a  function $\tau_{\lambda}$ defined on 
$\Lambda_1$ taking values in $[0,\infty)$  and 
for  $\lambda\in\Lambda_{1}$ 
introduce  
the operators
$$
T_{h,\lambda}\varphi(x)=\varphi(x+h\lambda),
\quad
\bar\delta_{h,\lambda}
=\tau_{\lambda}h^{-1}(T_{h,\lambda}-1).  
$$ 
 Set 
$$
\|\Lambda_1\|^2
=\sum_{\lambda\in\Lambda_1}|\tau_{\lambda}\lambda|^2. 
$$  
For uniformity of notation we also introduce
$\Lambda_{2}$ as the set of fixed
 distinct vectors $\ell^{1},...,
\ell^{d}$ none of which is in $ 
\Lambda_{1}$ and   define
$$
\bar{\delta}
_{h,\ell^{i}}=\tau_{0}D_{i},
\quad 
T_{h,\ell^{i}}=1,\quad
\Lambda=\Lambda_{1}\cup\Lambda_{2},
$$
where $\tau_0>0$ is a fixed parameter.  
For $\lambda=(\lambda^{1}, \lambda^{2})\in\Lambda^{2}
 $
introduce the operators
$$
T_{h,\lambda}=T_{h,\lambda^{1}} 
T_{h,\lambda^{2}},\quad
\bar\delta_{h,\lambda} 
= \bar\delta _{h,\lambda^{1}} 
 \bar\delta_{h,\lambda^{2}} .
$$

For $k=1,2$, $\mu\in\Lambda^{k}$  we set
$$
Q_{h,\mu}\varphi=h^{-1}\sum_{\lambda\in\Lambda_{1}}
( \bar\delta _{h,\mu}q_{\lambda})\delta_{\lambda}
\varphi,\quad
L^{0}_{h,\mu}\varphi=Q_{h,\mu}\varphi +\sum_{\lambda\in\Lambda_{1}}
( \bar\delta _{h,\mu}p_{\lambda})\delta_{\lambda}
\varphi  ,
$$
$$
A_{h}(\varphi)=2\sum_{\lambda\in\Lambda }
( \bar\delta _{h,\lambda}
\varphi)L^{0}_{h,\lambda}T_{h,\lambda}\varphi,\quad
\cQ_{h}(\varphi)=\sum_{\lambda\in\Lambda_{1} }
\chi_{h,\lambda}
(\delta_{h,\lambda}\varphi)^{2}.
$$

Below $B(\bR^{d})
$ is the set of  bounded Borel 
functions on
$\bR^{d}$ and $\frK$ is 
the set of bounded operators
$\cK_{h}=\cK_{h}(t)$ 
mapping $B(\bR^{d})
$ into itself preserving  the cone
of nonnegative functions 
and satisfying $\cK_{h}1\leq1$.

Finally, fix   some constants 
$\delta\in(0,1]$ and   $K\in[1,\infty)$. 

\begin{assumption}
                              \label{assumption 08.11.2.1} 
There exists a constant $c_{0}>0$ such that $c\geq c_{0}$.
\end{assumption}

\begin{remark}
                                     \label{remark 07.9.18.8}
The above assumption  is almost irrelevant
if we only consider \eqref{equation} on a finite time interval.
Indeed,  if $c$ is just bounded, say $|c|\leq C=\text{const}$, by 
introducing a new function $v(t,x)=u(t,x)e^{-2Ct}$
we will have an equation for  $v$ similar to 
\eqref{equation} with $L_{h}^{0}v-(c+2C)v$ and $fe^{-2Ct}$
in place of $L_{h}u$ and $f$, respectively. 
Now for the new $c$ we have
  $c+2C\geq C$.
\end{remark}

\begin{assumption}
                              \label{assumption 11.22.11.06} 
 
We have $m\geq1$ and 
for any $h\in(0,h_{0}]$, there
exists  an operator $\cK_{h}=\cK_{h,m}\in\frK$,
 such that  
\begin{equation}
                                             \label{3.24.1}
m A_{h}(\varphi)
\leq(1- \delta)\sum_{\lambda\in\Lambda}
\cQ_{h}( \bar\delta _{h,\lambda}\varphi)
+K\cQ_{h}(\varphi)
+2(1-\delta)c\cK_{h}\big(\sum_{\lambda\in\Lambda }
| \bar\delta _{h,\lambda}\varphi|^{2}\big) 
\end{equation}
on $H_{T}$
for all smooth functions $\varphi $.
\end{assumption}

\begin{assumption}         \label{assumption 07.10.16.1}  
We have $m\geq2$ and, for any $h\in(0,h_{0}]$
and $n=1,...,m$, there
exists  an operator $\cK_{h}=\cK_{h,n}\in\frK$,
 such that  
$$
n \sum_{\nu\in\Lambda}A_{h}( \bar\delta _{h,\nu}\varphi)
+ n(n-1)\sum_{\lambda\in\Lambda^{2}}
( \bar\delta _{h,\lambda}\varphi)
Q_{h,\lambda}T_{h,\lambda}\varphi
\leq (1- \delta)\sum_{\lambda\in\Lambda^{2}}
\cQ_{h}( \bar\delta _{h,\lambda}
\varphi)
$$
\begin{equation}
                                           \label{3.24.01}
+K
\sum_{\lambda\in\Lambda}
\cQ_{h}( \bar\delta _{h\lambda}\varphi)
+2(1-\delta)c\cK_{h}\big(\sum_{\lambda\in\Lambda ^{2}}
| \bar\delta _{h,\lambda}\varphi|^{2}\big) 
+K\cK\big(\sum_{\lambda\in\Lambda}
| \bar\delta _{h,\lambda}\varphi|^{2}
\big)
\end{equation}
on $H_{T}$
for all smooth functions $\varphi $.
\end{assumption}
Obviously Assumptions 
\ref{assumption 11.22.11.06} and \ref{assumption 07.10.16.1} 
are satisfied if $q_{\lambda}$ and $p_{\lambda}$
are independent of $x$. In the general case,
as it is discussed in 
\cite{GK1}, the above assumptions impose not only 
analytical conditions, but they are related also 
to some 
structural conditions, 
which can somewhat easier be analized under  the
symmetry condition (S).

\begin{assumption}            \label{assumption 18.12.07.06} 
For   all $t\in[0,T]$ 
\begin{equation}
                                         \label{3.9.1}
\sum_{\lambda\in\Lambda_{1}}\lambda q_{\lambda}(t,x )
\quad\hbox{is independent of}\quad   x .
\end{equation}
 \end{assumption}
 
In the main case of applications we will require
the last sum to be identically zero 
as in Assumption \ref{assumption 14.23.01.08}.

\begin{remark}                         \label{remark 07.10.17.4} 
Assumptions  \ref{assumption 11.22.11.06}  and \ref{assumption
07.10.16.1}  are discussed at length
and in many
details in \cite{GK1} and \cite{GK2}, and   
sufficient conditions, without involving test functions 
$\varphi$ are given for these assumptions  to 
be satisfied. 
In particular, it is shown in \cite{GK2} 
that if condition (S) holds,  $m\geq2$, 
$\tau_{\lambda}=1$,  
Assumptions 
\ref{assumption 16.12.07.06}
and \ref{assumption 1.26.11.06} are satisfied,  
and $q_{\lambda}\geq\kappa$ for a constant 
$\kappa>0$,  then
both Assumptions \ref{assumption 11.22.11.06}
and \ref{assumption 07.10.16.1} are satisfied 
for 
any $c_{0}>0$ 
 and $\delta\in(0,1)$,  
if
$h_{0}$ is sufficiently small and 
$ \tau_0$, $K$, and $\cK$
are chosen appropriately. Moreover, 
the condition $\kappa>0$
can be dropped, provided,
additionally, that $c_{0}$
is large enough 
(this time we need not assume that $h$ is small).
Remember, that by Remark \ref{remark 07.9.18.8} 
the condition that $c_{0}$ be large is, actually,
harmless as long as we are concerned with equations
on a finite time interval.
Mixed situations, when $c$ is large
at those points where some of $q_{\lambda}$
can vanish are also considered in \cite{GK2}. 

In \cite{GK1} we have seen that  
Assumption
\ref{assumption 11.22.11.06} imposes certain nontrivial 
{\em structural\/} conditions on $q_{\lambda}$ which cannot
be guaranteed by  
 the size of $c_{0}$ if  $q_{\lambda}$ is only once 
continuously differentiable.
In contrast,  
even without condition (S), given that Assumptions  
\ref{assumption 16.12.07.06}, \ref{assumption 11.22.11.06},
\ref{assumption 18.12.07.06}  
are satisfied and $m\geq2$, as  is shown in \cite{GK2}, 
Assumption \ref{assumption 07.10.16.1}
is also satisfied if $c_{0}$ is large enough.

\end{remark}

\begin{theorem}                         \label{theorem 1.25.11.08}
Let Assumption  \ref{assumption 16.12.07.06} through 
\ref{assumption 07.10.16.1} hold with $m\geq3$.  
Let $k\geq0$ and $l\in[0,m]$ be integers. 
Then 
for every multi-index 
$\alpha$ such that $|\alpha|\leq l$ the function
$D^{\alpha}u_h$ is a continuous function on $H_T$ and  
expansion \eqref{6.25.11.08}  
holds with
$D^{\alpha}r_h$ satisfying 
 \eqref{7.25.11.08}, 
provided one of the following 
conditions  is  met:

(i) $m\geq3k+3+l$;
    
(ii) $m\geq2k+3+l$ and condition (S) holds;

(iii) $k$ is odd, $m\geq2k+2+l$, and conditions (S) 
and \eqref{08.11.7.5} are satisfied. 
In each of the cases (i)-(iii) the constant  
  $N$  depends only   on $d$, $m$, 
$\delta$, $K$, $\tau_0$, $c_0$,  
$|\Lambda_1|$, 
 $\|\Lambda_1\|$,  $M_{0},\dots,M_m$. 
In case (iii) we have $u^{(j)}=0$ 
for all odd $j$ in the expansion.  
\end{theorem}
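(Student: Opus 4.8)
The plan is to follow the same scheme used to prove Theorem \ref{theorem 6.25.08.08}, but now carry one extra ``budget'' of $l$ derivatives throughout. First I would fix a multi-index $\alpha$ with $|\alpha|\le l$ and apply $D^{\alpha}$ formally to the finite difference equation \eqref{equation}; since $D^{\alpha}$ commutes with the shift operators $\delta_{h,\lambda}$, the function $D^{\alpha}u_{h}$ satisfies a finite difference equation of the same type, but with extra lower-order terms produced by the Leibniz rule acting on the coefficients $q_{\lambda},p_{\lambda},c$ and on $f,g$. The point of Assumptions \ref{assumption 11.22.11.06} and \ref{assumption 07.10.16.1} (through the estimates of \cite{GK1}, \cite{GK2}) is precisely that these derivative-differentiated schemes still obey $h$-independent (and here also, under (i)--(iii), the relevant order) bounds on derivatives up to order $m-|\alpha|$; this is what legitimizes differentiating and guarantees $D^{\alpha}u_{h}$ is continuous on $H_{T}$.

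Next I would write the candidate expansion. Plugging \eqref{08.11.2.9} into the equation for $D^{\alpha}u_{h}$ and matching powers of $h$, one sees that $D^{\alpha}u^{(0)},\dots,D^{\alpha}u^{(k)}$ are the natural candidates for the first $k+1$ coefficients: indeed $u^{(0)}$ solves \eqref{08.11.2.3} and the $u^{(j)}$ solve \eqref{08.11.2.6}, and by Theorems \ref{theorem 08.11.2.1} and \ref{theorem 08.11.7.2} they lie in $\frB^{m}$, resp.\ $\frB^{m-3j}$ or $\frB^{m-2j}$, so under the smoothness budgets stated in (i), (ii), (iii) one has at least $l$ bounded derivatives of each $u^{(j)}$, $j\le k$. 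Define then
$$
R_{h,\alpha}:=h^{-(k+1)}\Bigl(D^{\alpha}u_{h}-D^{\alpha}u^{(0)}-\sum_{1\le j\le k}\frac{h^{j}}{j!}D^{\alpha}u^{(j)}\Bigr),
$$
which, by the computation just sketched, equals $D^{\alpha}r_{h}$ with $r_{h}$ as in Theorem \ref{theorem 6.25.08.08}. The remaining task is the bound \eqref{7.25.11.08}.

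To get that bound I would argue as in Section \ref{section 5.23.11.08}: $D^{\alpha}r_{h}$ satisfies a finite difference equation with the same operator $L_{h}$ (plus the $D^{\alpha}$-differentiated lower-order corrections), whose free term is, up to harmless factors of $h$, a combination of $D^{\beta}u^{(j)}$ with $|\beta|\le|\alpha|+({}$something$)\le$ the available smoothness, together with finite difference truncation errors of $\cL^{(i)}$ against $u^{(j)}$. Thanks to the derivative estimates of \cite{GK1}, \cite{GK2} all these free terms are bounded by $N(\|f\|_{m}+\|g\|_{m})$ uniformly in $h$; the maximum principle enjoyed by $L_{h}$ (Assumption \ref{assumption 1.26.11.06} ensures the scheme's monotonicity, and $c\ge0$ handles the zeroth-order term) then converts this into the pointwise bound on $D^{\alpha}r_{h}$. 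The bookkeeping of which derivatives appear is exactly where the three smoothness thresholds $m\ge 3k+3+l$, $m\ge 2k+3+l$, $m\ge 2k+2+l$ come from: each order $h^{j}$ step costs three (resp.\ two, under (S)) derivatives via \eqref{08.11.7.3}--\eqref{08.11.7.4}, the remainder step costs one more, and $|\alpha|\le l$ must be subtracted from the total. In case (iii), \eqref{08.11.7.7} kills the odd-$j$ coefficients, which both saves derivatives and gives the stated vanishing $u^{(j)}=0$.

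The main obstacle is the last step done \emph{with $T$-independent constants}: one must verify that every derivative estimate invoked for the corrector fields $u^{(j)}$ and for $D^{\alpha}u_{h}$ is among those proved in \cite{GK1}, \cite{GK2} with bounds independent of $T$ (this is why Assumptions \ref{assumption 08.11.2.1}--\ref{assumption 07.10.16.1} are now imposed and $c\ge c_{0}>0$ is needed), and that the maximum-principle argument for $D^{\alpha}r_{h}$ does not reintroduce a factor growing in $T$ — the zeroth-order coefficient being bounded below by $c_{0}>0$ is exactly what prevents that. Getting the constant $N$ to depend only on the listed quantities $d,m,\delta,K,\tau_{0},c_{0},|\Lambda_{1}|,\|\Lambda_{1}\|,M_{0},\dots,M_{m}$, and on nothing else, is the delicate part and is where the earlier uniform estimates must be quoted with care.
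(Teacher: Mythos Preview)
Your overall plan is right in spirit, but the step ``apply the maximum principle to the equation satisfied by $D^{\alpha}r_{h}$'' does not work as written, and this is exactly where your argument diverges from the paper's.

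If you differentiate the equation for $w=u_{h}-\sum_{j\le k}h^{j}u^{(j)}/j!$ (equivalently, for $r_{h}$), the Leibniz rule acting on the second-order part $h^{-1}\sum_{\lambda}q_{\lambda}\delta_{h,\lambda}w$ produces terms of the form $h^{-1}(D^{\beta}q_{\lambda})\delta_{h,\lambda}(D^{\gamma}w)$ with $|\beta|\geq1$. These carry the same $h^{-1}$ as the leading term and are \emph{not} ``lower-order corrections'' that can be fed into Lemma~\ref{lemma 3.6.1} as a bounded free term $F$. So the maximum principle for $L_{h}$ cannot close the estimate on $D^{\alpha}w$; precisely these structural cross-terms are what Assumptions~\ref{assumption 11.22.11.06} and~\ref{assumption 07.10.16.1} are designed to control, and the maximum principle by itself knows nothing about them.

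The paper avoids differentiating the $w$-equation altogether. It keeps the \emph{undifferentiated} equation $w(t,x)=\int_{0}^{t}(L_{h}w+F)\,ds$ with $F=\sum_{j\le k}\frac{h^{j}}{j!}\mathcal O_{h}^{(k-j)}u^{(j)}$ as in \eqref{08.12.16.4}, and uses Lemma~\ref{lemma 3.25.08.08} together with Theorems~\ref{theorem 08.11.2.1}, \ref{theorem 08.11.7.2}, and \ref{theorem 08.11.3.1} (for $T$-independent constants) to get $\|F\|_{l}\leq N(\|f\|_{m}+\|g\|_{m})h^{k+1}$ under (i)--(iii). Then, instead of the maximum principle, it applies Theorem~\ref{theorem 17.25.06.07} to this equation: that theorem already packages the results of \cite{GK1},~\cite{GK2} under Assumptions~\ref{assumption 11.22.11.06}--\ref{assumption 07.10.16.1} and yields directly $\|D^{\alpha}w\|_{0}\leq N\|F\|_{l}$ with $N$ independent of $h$ and $T$. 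In short: do not push $D^{\alpha}$ through the scheme and then invoke the maximum principle; invoke the derivative-estimate Theorem~\ref{theorem 17.25.06.07} on the clean $w$-equation instead.
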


We prove this theorem in Section \ref{section 5.23.11.08}.  
Remember the definition of $\bar u_h$ 
and $\tilde u_h$ in 
\eqref{11.25.11.08} and 
\eqref{12.25.11.08}. The following   is 
an obvious consequence of Theorem~\ref{theorem 1.25.11.08}.

\begin{corollary}                         
                                       \label{corollary 10.25.11.08}
Suppose that the assumptions of 
Theorem \ref{theorem 1.25.11.08} are satisfied. 
Then 
\begin{equation*}
\sup_{H_T}|D^{\alpha}\bar u_h-D^{\alpha}u^{(0)}|\leq Nh^{k+1}
(\|f\|_m+\|g\|_m), 
\end{equation*}
and if condition (iii) is met then
\begin{equation*} 
\sup_{H_T}|D^{\alpha}\tilde u_h-D^{\alpha}u^{(0)}|\leq Nh^{k+1}
(\|f\|_m+\|g\|_m), 
\end{equation*}
where $N$ depends only on 
on $d$, $m$, 
$\delta$, $K$, $\tau_0$, $c_0$,  
$|\Lambda_1|$, 
 $\|\Lambda_1\|$, 
  $M_{0},\dots,M_m$. 
\end{corollary}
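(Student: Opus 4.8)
The plan is to deduce the corollary directly from Theorem~\ref{theorem 1.25.11.08} by applying the same linear-combination argument used in the proof of Theorem~\ref{theorem 1.08.02.08}, but now to the expanded functions $D^{\alpha}u_{h}$ rather than to $u_{h}$ itself. First I would fix a multi-index $\alpha$ with $|\alpha|\le l$ and invoke Theorem~\ref{theorem 1.25.11.08} to write, for each $j=0,1,\dots,k$,
\begin{equation*}
D^{\alpha}u_{2^{-j}h}=D^{\alpha}u^{(0)}
+\sum_{i=1}^{k}\frac{h^{i}}{i!\,2^{ij}}D^{\alpha}u^{(i)}
+2^{-j(k+1)}h^{k+1}D^{\alpha}r_{2^{-j}h},
\end{equation*}
where the remainders satisfy $\sup_{H_T}|D^{\alpha}r_{2^{-j}h}|\le N(\|f\|_m+\|g\|_m)$ with $N$ independent of $h$ (and of $j$, since $2^{-j}h\in(0,h_0]$). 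Forming $D^{\alpha}\bar u_h=\sum_{j=0}^{k}b_j D^{\alpha}u_{2^{-j}h}$ and using the two identities $\sum_{j=0}^{k}b_j=1$ and $\sum_{j=0}^{k}b_j 2^{-ij}=0$ for $i=1,\dots,k$ that define $(b_0,\dots,b_k)$ via the inverse Vandermonde matrix, every term with a power $h^{i}$, $1\le i\le k$, cancels, and the coefficient of $D^{\alpha}u^{(0)}$ collapses to $1$. One is left with
\begin{equation*}
D^{\alpha}\bar u_h-D^{\alpha}u^{(0)}
=h^{k+1}\sum_{j=0}^{k}b_j\,2^{-j(k+1)}D^{\alpha}r_{2^{-j}h},
\end{equation*}
whose supremum over $H_T$ is bounded by $N h^{k+1}(\|f\|_m+\|g\|_m)$, absorbing the fixed constants $|b_j|$ and $2^{-j(k+1)}\le1$ into $N$. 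This proves the first inequality.

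For the second inequality I would repeat the argument with $\tilde u_h=\sum_{j=0}^{\tilde k}\tilde b_j u_{2^{-j}h}$, $\tilde k=(k-1)/2$, and the coefficients $(\tilde b_0,\dots,\tilde b_{\tilde k})$ defined via the inverse of the Vandermonde matrix with entries $4^{-(i-1)(j-1)}$. Here one uses that in case (iii) of Theorem~\ref{theorem 1.25.11.08} the odd-index coefficients vanish, $u^{(i)}=0$ for odd $i$, so the expansion of $D^{\alpha}u_{2^{-j}h}$ effectively runs over the even powers $h^{2},h^{4},\dots$; the defining identities $\sum_j\tilde b_j=1$ and $\sum_j\tilde b_j 4^{-ij}=0$ for $i=1,\dots,\tilde k$ then annihilate all of $h^{2},h^{4},\dots,h^{2\tilde k}=h^{k-1}$, leaving a remainder of order $h^{k+1}$ exactly as before. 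The constant $N$ inherits the dependence listed in Theorem~\ref{theorem 1.25.11.08}, namely on $d$, $m$, $\delta$, $K$, $\tau_0$, $c_0$, $|\Lambda_1|$, $\|\Lambda_1\|$, and $M_0,\dots,M_m$.

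There is essentially no obstacle: the corollary is purely a bookkeeping consequence of the expansion already granted by Theorem~\ref{theorem 1.25.11.08}, together with the algebraic properties of the extrapolation coefficients, which are identical to those exploited in the proofs of Theorems~\ref{theorem 1.08.02.08} and~\ref{theorem 08.11.2.3}. The only point requiring a word of care is that the remainder bound \eqref{7.25.11.08} must be applied at the $k+1$ mesh-sizes $h,h/2,\dots,2^{-k}h$ simultaneously, which is legitimate precisely because that bound holds uniformly for all mesh-sizes in $(0,h_0]$ with a single constant $N$; hence I would simply write that the result is an immediate consequence, mirroring the earlier proofs, and omit the routine recomputation.
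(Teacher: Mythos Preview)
Your proposal is correct and matches the paper's approach exactly: the paper simply declares the corollary ``an obvious consequence of Theorem~\ref{theorem 1.25.11.08}'' without writing out a proof, and the argument you spell out---applying the Vandermonde cancellation of Theorems~\ref{theorem 1.08.02.08} and~\ref{theorem 08.11.2.3} to the expansion \eqref{6.25.11.08} of $D^{\alpha}u_{h}$---is precisely what is intended.
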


\begin{remark}
                                            \label{remark 08.12.14.1}
Observe that for $k=0$ Theorem \ref{theorem 1.25.11.08}
implies that 
\begin{equation}
                                              \label{08.12.14.3}
\sup_{H_{T}}|D^{\alpha}u_{h}- D^{\alpha}u^{(0)}|\leq Nh
\end{equation}
if $m\geq 3+|\alpha|$ and 
 Assumption  \ref{assumption 16.12.07.06} through 
\ref{assumption 07.10.16.1} hold.
In addition one can replace $D^{\alpha}u_{h}$ in \eqref{08.12.14.3}
with $\delta^{\alpha}_{h}$, where
$$
\delta^{\alpha}_{h}=\delta_{h,e_{1}}^{\alpha_{1}}\cdot...
\cdot\delta_{h,e_{d}}^{\alpha_{d}}
$$
and $e_{i}$ is the $i$th basis vector in $\bR^{d}$.
This follows easily from the mean value theorem and Theorem
\ref{theorem 17.25.06.07}      below.
The reader understands that similar assertion is true
in case of Corollary \ref{corollary 10.25.11.08}
with the only difference that one needs larger $m$ and better
finite-difference approximations of $D^{\alpha}$.
\end{remark}

Next we investigate the smoothness of $u_h$ in 
$x$ and $h$. Recall that 
for functions $\varphi$ depending on
$h$  we use the notation 
$D_h^r\varphi$ for the $r$-th derivative of   
$\varphi$ in $h$. As usual, $D^0_h\varphi:=\varphi$.

\begin{remark}
                                    \label{remark 08.2.19.1}
Suppose that Assumption 
\ref{assumption 16.12.07.06} is satisfied.
Take an $h_{1}\in(0,h_{0})$, 
consider   equation \eqref{equation} as an equation
about a function $u_{h}(t,x)$ as function of
$(h,t,x)\in[h_{1},h_{0}]\times H_{T}$ and look for solutions
in the  space $\frB^{m}(h_{1})=\frB^{m}_{T}(h_{1})$ which is 
defined  as the space of functions on
$[h_{1},h_{0}]\times H_{T}$ with finite norm
\begin{equation}
                                                        \label{09.4.16.1}
\sum_{|\alpha|+3r\leq m}
\sup_{[h_{1},h_{0}]\times H_{T}}
|D^{\alpha}D^{r}_{ h}u_{h}(t,x)|.
\end{equation}

It is obvious that
the integrand in \eqref{equation} can be considered
as the result of application of an operator,
which is bounded in $\frB^{m}(h_{1})$, to $u_{h}(s,x)$.
Therefore, a standard abstract theorem on solvability
of ODEs in Banach spaces shows that there exists a solution
of \eqref{equation} in $\frB^{m}(h_{1})$. Since just bounded
solutions are uniquely defined by \eqref{equation},
we conclude that our $u_{h}$ belongs to $\frB^{m}(h_{1})$
for any $h_{1}\in(0,h_{0})$. Obviously, if the derivatives
of the data are continuous in $x$, the same will hold
for $u_{h}$.

The above argument, actually, works if we replace
$|\alpha|+3r\leq m$ with $|\alpha|+r\leq m$ in \eqref{09.4.16.1}.
We talk about \eqref{09.4.16.1} in the above form because we will show that
under our future assumptions
the quantity \eqref{09.4.16.1} is bounded independently of $h_{1}$. 
\end{remark}

\begin{theorem}                      \label{theorem 17.25.06.07}                
 Let $k\geq0$ and $m\geq2$ be integers and suppose that
Assumptions 
\ref{assumption 16.12.07.06}
through \ref{assumption 07.10.16.1}  are satisfied.
Then, 
for each  integer $r\geq0$ such that
$$
3k+r\leq m,
$$
the generalized derivatives $D^{r} D_{h}^{k}u _h$ exist
on $ (0,h_{0}]\times H_{T}$,   
are bounded and we have 
\begin{equation}                           \label{20.19.02.07}
 |D^{r}D_{h}^{k} u _h |
\leq N(\|f \|_{m}+\|g \|_{m}) ,
\end{equation}
  where 
$N$ 
is a constant depending only on  
$m, \delta, c_0,\tau_0,K $, 
$M_0$, ..., $M_{m}$, 
 $|\Lambda_1|$, and 
$\|\Lambda_1\|$. 
In particular, $u_{h} \in\frB^{m}$ and
$$
\|u_{h} \|_{m}\leq N(\|f \|_{m}+\|g \|_{m}).
$$
\end{theorem}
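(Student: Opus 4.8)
The plan is to deduce the result from the $x$-derivative estimates for equation \eqref{equation} proved in \cite{GK1} and \cite{GK2} (which, crucially, are independent of $h$ and $T$), by differentiating \eqref{equation} repeatedly in $h$ and reading the equations obtained for $D_h^ku_h$ as instances of the same scheme with a modified free term.

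Two preliminary remarks. First, by Remark \ref{remark 08.2.19.1} the solution $u_h$ lies in $\frB^m(h_1)$ for every $h_1\in(0,h_0)$; in particular, for $3k+r\le m$ the generalized derivatives $D^rD_h^ku_h$ exist and are bounded on $[h_1,h_0]\times H_T$, and the manipulations below (differentiation under the integral sign, etc.) are legitimate. The only substantive task is therefore to make the bound uniform in $h_1$, i.e.\ valid on $(0,h_0]\times H_T$. Second, Assumption \ref{assumption 14.23.01.08} --- which is among the hypotheses --- is precisely what removes the apparent $h^{-1}$ singularity from $L_h^0$: by Taylor's formula, for smooth $\varphi$,
$$
L_h^0\varphi(x)=\sum_{\lambda\in\Lambda_1}q_\lambda(x)\int_0^1(1-\theta)\partial_\lambda^{2}\varphi(x+\theta h\lambda)\,d\theta+\sum_{\lambda\in\Lambda_1}p_\lambda(x)\int_0^1\partial_\lambda\varphi(x+\theta h\lambda)\,d\theta,
$$
the $h^{-1}$-order term dropping out because $\sum_i\big(\sum_{\lambda}\lambda_iq_\lambda\big)D_i\varphi=0$. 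Hence the operator $L_h^{(i)}$ obtained by differentiating the $h$-dependence of $L_h$ exactly $i$ times ($L_h^{(0)}=L_h$; for $i\ge1$ the $-c\varphi$ term, $c$ being independent of $h$, drops out) is given by the same kind of integrals with $\partial_\lambda^{i+2}$, $\partial_\lambda^{i+1}$ in place of $\partial_\lambda^{2}$, $\partial_\lambda$ and harmless extra factors $(1-\theta)\theta^i$, $\theta^i$; consequently, for $n\le m$,
$$
\|L_h^{(i)}\psi\|_n\le N\,\|\psi\|_{n+i+2},\qquad N=N\big(m,M_0,\dots,M_m,|\Lambda_1|\big),
$$
uniformly over $h\in(0,h_0]$.

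Fix $h$ and differentiate \eqref{equation} $k$ times in $h$. Since $g$ and $f$ do not depend on $h$, Leibniz' rule gives that $v_k:=D_h^ku_h$ solves
$$
v_k(t,x)=\int_0^t\big(L_hv_k(s,x)+F_k(s,x)\big)\,ds,\qquad F_k:=\sum_{i=1}^kC^i_kL_h^{(i)}v_{k-i},
$$
which is of the form \eqref{equation} with data $(F_k,0)$; as $v_k$ is bounded (Remark \ref{remark 08.2.19.1}) and bounded solutions are unique (Remark \ref{remark 9.19.01.08}), the estimates of \cite{GK2} apply to it. We now induct on $k$, assuming w.l.o.g.\ $3k\le m$ (otherwise the assertion is vacuous). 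For $k=0$ the estimate $\|u_h\|_m\le N(\|f\|_m+\|g\|_m)$, with $N$ independent of $h$ and $T$, is the $x$-derivative estimate of \cite{GK2} under Assumptions \ref{assumption 16.12.07.06}--\ref{assumption 07.10.16.1}. Granting $\|v_j\|_{m-3j}\le N(\|f\|_m+\|g\|_m)$ for $j<k$, the bound on $L_h^{(i)}$ yields
$$
\|F_k\|_{m-3k}\le N\sum_{i=1}^kC^i_k\|v_{k-i}\|_{m-3k+i+2}\le N\sum_{i=1}^kC^i_k\|v_{k-i}\|_{m-3(k-i)}\le N(\|f\|_m+\|g\|_m),
$$
where $m-3k+i+2\le m-3(k-i)$ since $i\ge1$, and $0\le m-3k+i+2\le m$ since $3k\le m$ and $1\le i\le k$. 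Applying the derivative estimate of \cite{GK2} to the equation for $v_k$ (free term $F_k$, zero initial data) gives $\|v_k\|_{m-3k}\le N\|F_k\|_{m-3k}\le N(\|f\|_m+\|g\|_m)$, which closes the induction. Finally, for $3k+r\le m$,
$$
|D^rD_h^ku_h|=|D^rv_k|\le[v_k]_r\le\|v_k\|_{m-3k}\le N(\|f\|_m+\|g\|_m),
$$
with $N$ independent of $h$, $h_1$ and $T$; the assertion $u_h\in\frB^m$, $\|u_h\|_m\le N(\|f\|_m+\|g\|_m)$ is the case $k=0$, $r=m$.

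Apart from the use of the already-proved, $T$-independent $x$-derivative estimates of \cite{GK1} and \cite{GK2}, the two delicate points are the elimination of the $h^{-1}$ singularity from $L_h^0$, where Assumption \ref{assumption 14.23.01.08} enters essentially, and the loss-of-derivatives bookkeeping: each differentiation in $h$ costs up to three $x$-derivatives (because of the second-difference part carrying $q_\lambda$), while each $x$-derivative of the solution costs one order, and one has to check that every summand $L_h^{(i)}v_{k-i}$ of $F_k$ is controlled by a norm of $v_{k-i}$ already furnished by the induction hypothesis. I expect this last verification to be the main obstacle, i.e.\ arranging that the auxiliary equations for the $v_k$ fall literally within the scope of the estimates imported from \cite{GK2}.
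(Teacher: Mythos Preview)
Your proposal is correct and follows essentially the same route as the paper: both differentiate \eqref{equation} $k$ times in $h$ using the Taylor representation of $L_h^0$ (Lemma \ref{lemma 17.27.01.08} / Corollary \ref{corollary 08.2.19.1}, which is where Assumption \ref{assumption 14.23.01.08} kills the $h^{-1}$ term), read the resulting equation for $u_h^{(k)}=D_h^ku_h$ as an instance of \eqref{equation} with free term $R_h^k$ (your $F_k$), and then apply the $x$-derivative estimate of \cite{GK2} (Theorem \ref{theorem 4.7.2}). The only cosmetic difference is that the paper writes the argument as an iteration of the recursive inequality $I_{kr}\le N\sum_{i=1}^k I_{k-i,\,i+2+r}$ down to $I_{0,3k+r}$, whereas you phrase it as an induction on $k$ with the hypothesis $\|v_j\|_{m-3j}\le N(\|f\|_m+\|g\|_m)$; these are equivalent bookkeeping for the same loss of $i+2\le 3i$ $x$-derivatives per $h$-derivative.
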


We prove this theorem in  Section \ref{section 3.7.1},   
and in Section \ref{section 1.25.11.08} we show that
the following fact, used
when we come to the elliptic case, is a simple corollary of it.
\begin{theorem}
                                      \label{theorem 08.11.3.1} 

Suppose that 
Assumptions 
\ref{assumption 16.12.07.06}
through \ref{assumption 07.10.16.1}
hold with $m\geq 2$.   
Then the constant $N$ in \eqref{08.11.3.2}
depends only on  
$m, \delta, c_0,\tau_0,K $, 
$M_{ 0 }$, ..., 
$M_{m}$, $|\Lambda_{1}|$, and
$\|\Lambda_1\| $ (thus, is independent of $T$).
The same is true for the constants $N$ in
Theorems \ref{theorem 08.11.7.2},
 \ref{theorem 6.25.08.08},
\ref{theorem 1.08.02.08}, and
\ref{theorem 08.11.2.3}.
\end{theorem}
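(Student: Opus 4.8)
\emph{Proof plan.} Everything will be deduced from Theorem~\ref{theorem 17.25.06.07}, whose constant is independent of $T$, together with an inspection of the proofs referred to in the statement. For \eqref{08.11.3.2} itself, I would apply Theorem~\ref{theorem 17.25.06.07} with $k=0$ to get $\|u_h\|_m\le N(\|f\|_m+\|g\|_m)$ for all $h\in(0,h_0]$, with $N$ depending only on $m,\delta,c_0,\tau_0,K,M_0,\dots,M_m,|\Lambda_1|$ and $\|\Lambda_1\|$. This uniform $\frB^m$-bound on $\{u_h\}$ produces, along a subsequence $h\to0$, a function $\bar u$ such that for each $|\alpha|\le m$ the generalized derivative $D^\alpha u_h$ converges weak-$*$ to $D^\alpha\bar u$; by lower semicontinuity $\|\bar u\|_m\le N(\|f\|_m+\|g\|_m)$, and, since (as part of the existence argument carried out in Section~\ref{section 1.25.11.08}) $\bar u\in\frB^2$ and solves \eqref{08.11.2.3}, the uniqueness in Theorem~\ref{theorem 08.11.2.1} gives $\bar u=u^{(0)}$. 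This yields \eqref{08.11.3.2} with the claimed $T$-free constant.

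The constants in Theorems~\ref{theorem 08.11.7.2}, \ref{theorem 1.08.02.08} and~\ref{theorem 08.11.2.3} then follow by bookkeeping. The $u^{(j)}$ solving \eqref{08.11.2.6} are produced by applying Theorem~\ref{theorem 08.11.2.1} finitely many times: each $u^{(j)}$ solves an equation of type \eqref{08.11.2.3} with zero initial value and right-hand side $\sum_{i=1}^{j}C^i_j\cL^{(i)}u^{(j-i)}$, where by \eqref{08.11.2.5} the coefficients of $\cL^{(i)}$ and their derivatives up to order $m$ are controlled by the $M_k$'s. Since only $k$ steps occur and the estimate \eqref{08.11.3.2} is now $T$-independent, an induction on $j$ (using the $\frB$-memberships already recorded in Theorem~\ref{theorem 08.11.7.2}) gives \eqref{08.11.7.3}--\eqref{08.11.7.4} with $T$-free constants. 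For Theorems~\ref{theorem 1.08.02.08} and~\ref{theorem 08.11.2.3}, the proofs displayed in the text show that the constant in \eqref{2.08.02.08}, and its analogue for $\tilde u_h$, is at most $\big(\sum_j|b_j|\big)$, resp.\ $\big(\sum_j|\tilde b_j|\big)$, times the constant in \eqref{08.11.2.10}; as $b_j,\tilde b_j$ depend only on $k$, these inherit $T$-independence once the constant in Theorem~\ref{theorem 6.25.08.08} has it.

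It remains to redo the estimate \eqref{08.11.2.10} of $r_h$ from the proof of Theorem~\ref{theorem 6.25.08.08} in Section~\ref{section 5.23.11.08}; this is the one place requiring more than bookkeeping. Put $v_h:=u_h-u^{(0)}-\sum_{j=1}^{k}(h^j/j!)u^{(j)}=h^{k+1}r_h$. Subtracting \eqref{08.11.2.3} and the system \eqref{08.11.2.6} from \eqref{equation} and Taylor-expanding $h^{-1}\delta_{h,\lambda}$ and $\delta_{h,\lambda}$ in $h$ --- the $h^{-1}$ first-order term of $h^{-1}\sum_\lambda q_\lambda\delta_{h,\lambda}$ cancelling by Assumption~\ref{assumption 14.23.01.08} (or by condition (S)), which is precisely why the operators $\cL^{(i)}$ were placed in \eqref{08.11.2.6} --- one finds that $v_h$ satisfies a finite-difference equation $v_h(t,x)=\int_0^t\big(L_hv_h(s,x)+h^{k+1}F_h(s,x)\big)\,ds$ with $v_h(0,\cdot)=0$, where $F_h$ is a fixed linear combination of $h$-Taylor remainders of the finite-difference operators applied to $u^{(0)},\dots,u^{(k)}$; each such remainder costs only finitely many $x$-derivatives of the $u^{(j)}$, so $\sup_{H_T}|F_h|\le N(\|f\|_m+\|g\|_m)$ with $N$ independent of $T$ by the previous paragraph. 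Now, since $L_h^01=0$ and, by Assumption~\ref{assumption 08.11.2.1}, $c\ge c_0>0$, the maximum principle for $L_h=L_h^0-c$ (a Gronwall-type comparison) gives $\sup_x|v_h(t,x)|\le\int_0^t e^{-c_0(t-s)}h^{k+1}\sup_x|F_h(s,x)|\,ds\le c_0^{-1}h^{k+1}\sup_{H_T}|F_h|$, whence $|r_h|\le N(\|f\|_m+\|g\|_m)$ with $N$ independent of $T$. In the original proof of Theorem~\ref{theorem 6.25.08.08}, where Assumption~\ref{assumption 08.11.2.1} is not in force, this last factor is $e^{NT}$ rather than $c_0^{-1}$, and that exponential is the sole source of $T$-dependence that the present, stronger hypotheses eliminate.

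The main obstacle I expect is exactly this last step: carefully identifying the defect $F_h$ as assembled from $h$-Taylor remainders of the discrete operators acting on the $u^{(j)}$, and checking that it involves only finitely many $x$-derivatives of each $u^{(j)}$ --- all of which are now controlled $T$-independently --- so that the $c_0$-version of the comparison estimate applies. Everything else reduces to the routine observation that finitely many applications of the $T$-free estimate \eqref{08.11.3.2} and finitely many fixed linear combinations cannot bring $T$ back.
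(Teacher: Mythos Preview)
Your proposal is correct and follows essentially the same route as the paper. The paper's proof is scattered across Remarks~\ref{remark 08.11.6.4} and~\ref{remark 08.11.8.1}: the first observes that under the full set of Assumptions~\ref{assumption 16.12.07.06}--\ref{assumption 07.10.16.1} one may feed Theorem~\ref{theorem 17.25.06.07} directly into the limiting argument of the proof of Theorem~\ref{theorem 08.11.2.1} to obtain \eqref{08.11.6.1} and hence \eqref{08.11.6.3} with a $T$-free constant; the second notes that the constants in Theorem~\ref{theorem 08.11.7.2} inherit this by induction, and explicitly leaves the remaining assertions (Theorems~\ref{theorem 6.25.08.08}, \ref{theorem 1.08.02.08}, \ref{theorem 08.11.2.3}) ``to the reader.'' Your write-up supplies exactly what the paper omits, and your treatment of the remainder $r_{h}$ --- applying the comparison principle with $c\ge c_{0}$ rather than first performing the substitution $u_{h}\mapsto u_{h}e^{-(M_{0}+1)t}$ --- is the intended mechanism; it matches Lemma~\ref{lemma 3.6.1} with $C=0$, $|\nu|^{-1}\le c_{0}^{-1}$. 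One small inaccuracy: in the paper's proof of Theorem~\ref{theorem 6.25.08.08} the maximum-principle step itself already yields a factor $|\nu|^{-1}\le 1$ (since after the substitution $c\ge1$); the $T$-dependence there enters through undoing that substitution and through the $T$-dependent constants of Theorems~\ref{theorem 08.11.2.1} and~\ref{theorem 08.11.7.2}, both of which your argument correctly eliminates.
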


Additional information on the behavior of
$D^{r}D_{h}^{k} u _h$ for small $h$ 
is provided by the following
result which we prove in Section \ref{section 3.7.1}.
\begin{theorem}
                                  \label{theorem 08.2.19.1}
Let $k\geq1$ be an {\em odd\/} number and suppose that 
Assumptions 
\ref{assumption 16.12.07.06} 
through 
\ref{assumption 07.10.16.1} 
hold with $m\geq 3k+1$.   Assume that the symmetry condition
(S) and \eqref{08.11.7.5} are satisfied.

Then, 
for any integer $r\geq0$ such that
$$
 3k+r\leq m-1
$$ 
we have
\begin{equation}
                                           \label{08.2.19.7}
\sup_{H_{T}}|D^{r}D_{h}^{k}u _{h}|
\leq N (\|f\|_m+\|g\|_m) h
\end{equation}
for all $h\in(0,h_{0}]$, where $N$ depends only on 
$m$, $\delta$, $c_0$, $\tau_0$, $K$, 
$|\Lambda_1|$, 
 $\|\Lambda_1\|$,
$M_0$,..., $M_{m}$.
\end{theorem}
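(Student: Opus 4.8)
The plan is to combine Theorem \ref{theorem 17.25.06.07}, which already gives the boundedness of $D^{r}D_{h}^{k}u_{h}$ under the stated smoothness, with the fact that under conditions (S) and \eqref{08.11.7.5} all the odd‑order coefficients $u^{(j)}$ in the expansion \eqref{08.11.2.9} vanish (Theorem \ref{theorem 08.11.7.2}(ii), applied with $k$ in place of $k$). The key observation is that the derivative $D_{h}^{k}u_{h}$ for odd $k$ has no zeroth‑order term in $h$ coming from the ``lower'' part of the Taylor expansion: differentiating \eqref{08.11.2.9} $k$ times in $h$ and setting $h=0$ picks out $u^{(k)}$, which is zero. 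So $D_{h}^{k}u_{h}$ vanishes at $h=0$, and one wants to upgrade this into the quantitative bound $|D^{r}D_{h}^{k}u_{h}|\le Nh$.

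First I would fix $h_{1}\in(0,h_{0})$ and work in the Banach space $\frB^{m}(h_{1})$ of Remark \ref{remark 08.2.19.1}, where $u_{h}$ is known to depend smoothly on $h$; all the manipulations below are then legitimate. Next, I would apply the already–proved Theorem \ref{theorem 17.25.06.07}, but \emph{one derivative higher in $h$}: since $m\ge 3k+1$ and $3k+r\le m-1$, we have $3(k+1)+r\le m$, so $D^{r}D_{h}^{k+1}u_{h}$ exists, is bounded, and satisfies $|D^{r}D_{h}^{k+1}u_{h}|\le N(\|f\|_{m}+\|g\|_{m})$ on $(0,h_{0}]\times H_{T}$. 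Now write, for each fixed $(t,x)$ and each $h\in(0,h_{0}]$,
\begin{equation*}
D^{r}D_{h}^{k}u_{h}(t,x)=D^{r}D_{h}^{k}u_{h}(t,x)\Big|_{h=0^{+}}
+\int_{0}^{h}D^{r}D_{h}^{k+1}u_{s}(t,x)\,ds,
\end{equation*}
where the boundary term is the limit as $h\downarrow0$, which exists by the boundedness just quoted. The bound $|D^{r}D_{h}^{k+1}u_{s}|\le N(\|f\|_{m}+\|g\|_{m})$ makes the integral at most $N(\|f\|_{m}+\|g\|_{m})h$, so everything reduces to showing that the boundary term vanishes, i.e. $D^{r}D_{h}^{k}u_{h}(t,x)\to0$ as $h\downarrow0$.

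To identify the limit at $h=0$, I would use the expansion \eqref{08.11.2.9} with this $k$ (valid by Theorem \ref{theorem 6.25.08.08}, case (iii), whose hypotheses are exactly (S), \eqref{08.11.7.5}, and $m\ge 2k+2$, which follows from $m\ge 3k+1$ since $k\ge1$): differentiating $k$ times in $h$, applying $D^{r}$, and letting $h\downarrow0$, all terms $h^{j}u^{(j)}$ with $j<k$ die, the $j=k$ term contributes $D^{r}u^{(k)}$, and the remainder term $h^{k+1}r_{h}$ contributes a term that goes to zero (again using the higher‑order bound from Theorem \ref{theorem 17.25.06.07} to control $D^{r}D_{h}^{i}(h^{k+1}r_{h})$ for $i\le k$, so that $D^{r}D_{h}^{k}(h^{k+1}r_{h})=O(h)$). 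Hence $D^{r}D_{h}^{k}u_{h}\big|_{h=0^{+}}=D^{r}u^{(k)}$, and by Theorem \ref{theorem 08.11.7.2}(ii), condition \eqref{08.11.7.7}, $u^{(k)}=0$ for odd $k$. This gives the boundary term $0$ and completes the estimate; the constant $N$ inherits its dependence on $m,\delta,c_{0},\tau_{0},K,|\Lambda_{1}|,\|\Lambda_{1}\|,M_{0},\dots,M_{m}$ from Theorem \ref{theorem 17.25.06.07}.

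The main obstacle I anticipate is not the integral‑remainder argument itself but the bookkeeping needed to justify interchanging $D^{r}$, $D_{h}^{k}$, and the $h\downarrow0$ limit in the expansion \eqref{08.11.2.9}, and more precisely showing that $D^{r}D_{h}^{i}(h^{k+1}r_{h})\to 0$ as $h\downarrow0$ for all $i\le k$ with the right uniformity in $(t,x)$. This is where one genuinely needs Theorem \ref{theorem 17.25.06.07} (which controls mixed $x$‑ and $h$‑derivatives of $u_{h}$, hence of $r_{h}$ through \eqref{08.11.2.9} and the already–established bounds on the $u^{(j)}$) rather than merely the existence of the expansion — indeed, as the introduction stresses, the expansion alone does not control $D_{h}$‑derivatives. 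Once that uniform control is in hand, the Leibniz expansion of $D_{h}^{i}(h^{k+1}r_{h})$ has every term carrying at least one factor of $h$, so the limit is as claimed.
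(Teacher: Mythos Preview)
Your overall strategy---integrate $D^{r}D_{h}^{k+1}u_{h}$ from $0$ to $h$ and show the boundary term vanishes---is conceptually sound, but the step ``since $m\ge 3k+1$ and $3k+r\le m-1$, we have $3(k+1)+r\le m$'' is simply false: from $3k+r\le m-1$ you only get $3(k+1)+r\le m+2$. Theorem~\ref{theorem 17.25.06.07} with $k+1$ in place of $k$ requires $3(k+1)+r\le m$, i.e.\ $3k+r\le m-3$, which is two derivatives short of the hypothesis $3k+r\le m-1$. So your argument proves the theorem only under the stronger assumption $3k+r\le m-3$, and gives nothing for $r=m-3k-1$ or $r=m-3k-2$.

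This is not an accident of bookkeeping. Each $h$-derivative costs three $x$-derivatives in Theorem~\ref{theorem 17.25.06.07}, so buying the factor of $h$ via one more $D_{h}$ necessarily costs three more $x$-derivatives. The paper's proof avoids this by working instead with the equation \eqref{08.2.19.5} satisfied by $u_{h}^{(k)}=D_{h}^{k}u_{h}$ and estimating the free term $R^{k}_{h}$ directly. Under (S) and \eqref{08.11.7.5} one has the cancellations \eqref{08.2.20.2}, which allow one to subtract the value at $h=0$ inside each $\lambda$-sum in $R^{k}_{h}$; the resulting difference $\varphi(t,x+h\theta\lambda)-\varphi(t,x)$ gains the factor of $h$ at the cost of only \emph{one} extra $x$-derivative. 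An induction on odd $k$ then handles the terms with even $i$ (where $k-i$ is odd and strictly smaller) via the inductive hypothesis, and the terms with odd $i$ via the same cancellation trick. That is why the sharp condition is $3k+r\le m-1$ rather than $3k+r\le m-3$.
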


\mysection{Main results for elliptic equations}

Here we assume that  $p_{\lambda}$, $q_{\lambda}$, 
$c$, and $f$ are independent of $t$
and turn now our attention to  the 
equations 
\begin{equation}                             \label{11.27.01.08}
L_hv_{h}(x)+f (x)=0\quad x\in\bR^d,
\end{equation}
\begin{equation}                             \label{08.11.3.4}
\cL v(x)+f (x)=0\quad x\in\bR^d.
\end{equation}

Naturally by a solution of \eqref{08.11.3.4} we 
mean
a function $v$ on $\bR^d$ 
such that it belongs to $\frB^2$ and 
\eqref{08.11.3.4} holds almost everywhere. 
Clearly, if a solution $v$ belongs to 
$\frB^3$ and $q_{\lambda}$, 
$p_{\lambda}$, $c$, and $f$ are continuous functions on 
$\bR^d$, then \eqref{08.11.3.4} holds 
everywhere.

First we prove the existence 
and uniqueness of the solutions of  
equations \eqref{11.27.01.08} and \eqref{08.11.3.4}.

\begin{theorem}                              \label{theorem 1.20.11.08}
Suppose that 
Assumption  \ref{assumption 16.12.07.06} 
is satisfied 
with an $m\geq0$ 
 and 
let Assumptions \ref{assumption 1.26.11.06} 
and \ref{assumption 08.11.2.1} 
hold.  
Then 
equation \eqref{11.27.01.08} has a unique 
bounded solution $v_h$. Moreover, $v_h$ belongs to 
$\frB^m$.  
\end{theorem}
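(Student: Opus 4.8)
The plan is to treat \eqref{11.27.01.08}, for each fixed $h\in(0,h_{0}]$ separately, as a fixed point equation for a strict contraction on $B(\bR^{d})$, the strictness coming entirely from Assumption \ref{assumption 08.11.2.1}. Writing $L_{h}$ out with the help of $\chi_{h,\lambda}=q_{\lambda}+hp_{\lambda}$ gives
\[
L_{h}\varphi(x)=\sum_{\lambda\in\Lambda_{1}}\frac{\chi_{h,\lambda}(x)}{h^{2}}
\big(\varphi(x+h\lambda)-\varphi(x)\big)-c(x)\varphi(x),
\]
so, setting $\gamma_{h}:=c+h^{-2}\sum_{\lambda\in\Lambda_{1}}\chi_{h,\lambda}$ (note $\gamma_{h}\ge c\ge c_{0}>0$ by Assumptions \ref{assumption 1.26.11.06} and \ref{assumption 08.11.2.1}), equation \eqref{11.27.01.08} is equivalent to $v_{h}=\cR_{h}v_{h}+\tilde f$ with
\[
\cR_{h}\varphi(x)=\sum_{\lambda\in\Lambda_{1}}\frac{\chi_{h,\lambda}(x)}{h^{2}\gamma_{h}(x)}\,
\varphi(x+h\lambda),\qquad \tilde f=f/\gamma_{h}.
\]
By Assumption \ref{assumption 1.26.11.06} the operator $\cR_{h}$ is nonnegative and $\cR_{h}1=1-c/\gamma_{h}$; since $q_{\lambda},p_{\lambda},c$ are bounded under Assumption \ref{assumption 16.12.07.06}, for each fixed $h$ the function $\gamma_{h}$ is bounded, hence $\cR_{h}1\le 1-c_{0}/\sup_{\bR^{d}}\gamma_{h}$ and $\|\cR_{h}\|_{B(\bR^{d})\to B(\bR^{d})}<1$. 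Therefore $I-\cR_{h}$ is boundedly invertible on $B(\bR^{d})$, and $v_{h}:=\sum_{n\ge0}\cR_{h}^{n}\tilde f$ is the unique bounded solution of \eqref{11.27.01.08}. (Uniqueness also follows directly from the maximum principle for $L_{h}$: a bounded $w$ with $L_{h}w=0$ obeys $c\,w=L^{0}_{h}w$, and evaluating this along a sequence where $w$ tends to $\sup_{\bR^{d}}w$ and using $c\ge c_{0}>0$, $\chi_{h,\lambda}\ge0$ forces $\sup_{\bR^{d}}w\le0$; similarly $\sup_{\bR^{d}}(-w)\le0$.)

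To obtain $v_{h}\in\frB^{m}$ I would bootstrap from this identity. Since $f,q_{\lambda},p_{\lambda},c\in\frB^{m}$ and $\gamma_{h}\ge c_{0}>0$, for fixed $h$ the coefficients $\chi_{h,\lambda}/(h^{2}\gamma_{h})$ of $\cR_{h}$ and the function $\tilde f$ again lie in $\frB^{m}$; in particular $v_{h}=\sum_{n\ge0}\cR_{h}^{n}\tilde f$ is a uniform limit of continuous functions, hence continuous. For the derivatives, differentiate $v_{h}=\cR_{h}v_{h}+\tilde f$: for a multi-index $\alpha$ the generalized function $D^{\alpha}v_{h}$ satisfies $(I-\cR_{h})D^{\alpha}v_{h}=G_{\alpha}$, where, by the Leibniz rule applied to the $x$-dependent coefficients of $\cR_{h}$, $G_{\alpha}$ is assembled from $D^{\beta}v_{h}$ with $|\beta|<|\alpha|$, from derivatives of the coefficients, and from $D^{\alpha}\tilde f$, and is therefore a bounded function as soon as the lower-order derivatives of $v_{h}$ are. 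Inverting $I-\cR_{h}$ on $B(\bR^{d})$ (legitimate because $\|\cR_{h}\|_{B\to B}<1$) shows that $D^{\alpha}v_{h}$ is bounded; an induction on $|\alpha|$ up to $|\alpha|=m$ then gives $v_{h}\in\frB^{m}$ with an ($h$-dependent) bound. One could equally well produce $v_{h}$ as the limit, as $t\to\infty$, of the solution $u_{h}$ of the parabolic problem \eqref{equation} with the given time-independent $f$ and $g\equiv0$ (Remark \ref{remark 9.19.01.08}), the convergence of $u_{h}(t,\cdot)$ and of its $x$-derivatives again being supplied by $c\ge c_{0}$.

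The only genuinely essential ingredient is Assumption \ref{assumption 08.11.2.1}: it is precisely what makes $\cR_{h}$ a strict contraction (equivalently, what makes the parabolic solution stabilize as $t\to\infty$), and without it \eqref{11.27.01.08} need not have any bounded solution at all, just as the elliptic equation $\cL v+f=0$ with $c\equiv0$ (e.g.\ $\cL=\Delta$) fails to for a generic bounded $f$. Everything else is routine: checking that, for fixed $h$, $\cR_{h}$ and $\tilde f$ inherit the $\frB^{m}$-regularity of the coefficients and data (where $\gamma_{h}\ge c_{0}>0$ is used once more), and carrying out the Leibniz-rule induction above. No $h$-uniform bounds are asserted at this stage; those require the additional structural assumptions that enter the later elliptic results.
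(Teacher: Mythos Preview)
Your proof is correct and follows essentially the same line as the paper's: both rewrite \eqref{11.27.01.08} as the fixed-point equation $v_{h}=\xi\sum_{\lambda}\chi_{h,\lambda}\,T_{h,\lambda}v_{h}+h^{2}\xi f$ with $\xi^{-1}=h^{2}c+\sum_{\lambda}\chi_{h,\lambda}$ and invoke the contraction principle (strictness coming from $c\ge c_{0}>0$) for existence and uniqueness in $B(\bR^{d})$. For the $\frB^{m}$-regularity the paper simply says ``using smooth successive iterations yields that $v_{h}\in\frB^{m}$'', whereas you differentiate the fixed-point identity and induct on $|\alpha|$; these are interchangeable routine arguments.
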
 
 
\begin{proof}
Observe that \eqref{11.27.01.08}
is equivalent to
$$
v_{h}(x)=h^{2}\xi(x)
f(x)+\xi(x)\sum_{\lambda\in\Lambda_{1}}
\chi_{\lambda}v_{h}(x+\lambda h),
$$
where 
$$
\xi^{-1} =h^{2}c +\sum_{\lambda\in\Lambda_{1}} \chi_{\lambda} . 
$$
It is seen that the existence and uniqueness
of bounded solution of \eqref{11.27.01.08} follows
by contraction principle. Using smooth successive
iterations yields that $v_{h}\in\frB^{m}  $.
\end{proof}

\begin{theorem}                        \label{theorem 1.22.11.08} 
 Let  Assumptions 
\ref{assumption 16.12.07.06}
through \ref{assumption 07.10.16.1}
hold with an $m\geq 2$.  Then equation 
\eqref{08.11.3.4} has a unique  solution $v$
in the space $\frB^{2}$.  
Moreover, $v\in\frB^{m}$ and there is a constant $N$  
depending only on  
$m$, $ \delta$, $c_0$, $\tau_0$, $K $, 
$M_0$, ..., $M_{m}$, $|\Lambda_{1}|$, and
$\|\Lambda_1\| $ such that 
\begin{equation}                           \label{1.23.11.08}
\|v\|_m\leq N\|f\|_m. 
\end{equation}
\end{theorem}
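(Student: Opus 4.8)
\textbf{Proof plan for Theorem \ref{theorem 1.22.11.08}.}
The plan is to derive the elliptic result from the parabolic theory already developed, using the exponential weight $e^{-c_0 t}$ to kill the $T$-dependence and passing to the limit $t\to\infty$. First I would consider, for each $T>0$, the parabolic problem \eqref{08.11.2.3} with $g\equiv0$ and the given (time-independent) $f$, and let $u_T(t,x)$ denote its solution, which exists and lies in $\frB^m_T$ by Theorem \ref{theorem 08.11.2.1}. The key point is that by Theorem \ref{theorem 08.11.3.1} the bound $\|u_T\|_m\leq N\|f\|_m$ holds with $N$ independent of $T$, under Assumptions \ref{assumption 16.12.07.06}--\ref{assumption 07.10.16.1}. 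Because $f$ is independent of $t$, the family $\{u_T\}$ is consistent (for $T_1<T_2$, $u_{T_2}$ restricted to $[0,T_1]$ equals $u_{T_1}$ by uniqueness), so there is a single function $u(t,x)$ on $H_\infty$, bounded together with its $x$-derivatives up to order $m$ uniformly in $t$, solving \eqref{08.11.2.3} on all of $H_\infty$.

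Next I would show that $u(t,x)$ converges, as $t\to\infty$, to a function $v(x)$ solving \eqref{08.11.3.4}. For this I would look at the time-difference $w(t,x):=u(t+s,x)-u(t,x)$ for fixed $s>0$; it satisfies the homogeneous version of \eqref{08.11.2.3} (no free term, zero initial data replaced by initial value $u(s,\cdot)-u(0,\cdot)=u(s,\cdot)$, which is bounded by $N\|f\|_m$). The operator $\cL$ enjoys a maximum principle and, thanks to Assumption \ref{assumption 08.11.2.1} ($c\geq c_0>0$), a Gronwall-type estimate gives $\sup_x|w(t,x)|\leq e^{-c_0 t}\sup_x|w(0,x)|\leq Ne^{-c_0 t}\|f\|_m$. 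Hence $u(t,\cdot)$ is Cauchy in the sup norm as $t\to\infty$; denote the limit by $v$. Applying the same contraction estimate to the spatial derivatives $D^\alpha w$ (using that $D^\alpha u(s,\cdot)$ is bounded by $N\|f\|_m$ and that differentiating the equation produces lower-order forcing terms controlled by Assumptions \ref{assumption 11.22.11.06}, \ref{assumption 07.10.16.1}, exactly as in \cite{GK1}, \cite{GK2}) yields convergence of $D^\alpha u(t,\cdot)$ for $|\alpha|\leq m$, so that $v\in\frB^m$ with $\|v\|_m\leq N\|f\|_m$, which is \eqref{1.23.11.08}. Passing to the limit in the integrated identity \eqref{08.11.7.1} written between $t$ and $t+1$ (the left side $\int\phi(u(t+1,x)-u(t,x))\,dx\to0$, and the integrand on the right is stationary in the limit) shows that $v$ satisfies $\int\phi(\cL v)\,dx+\int\phi f\,dx=0$ for all $\phi\in C_0^\infty$, i.e. $\cL v+f=0$ a.e., so $v$ is a solution in $\frB^2$.

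For uniqueness, suppose $v_1,v_2\in\frB^2$ both solve \eqref{08.11.3.4}; then $v:=v_1-v_2\in\frB^2$ satisfies $\cL v=0$ a.e. I would apply the (generalized) maximum principle for $\cL-$type operators with $c\geq c_0>0$: testing against a suitable nonnegative mollification or using that $v$ is bounded and $\cL v=0$ forces $c_0\sup v\leq \sup(cv)=\sup(\text{second-order terms})$, and since $v$ attains its sup in an appropriate mollified sense where the second-order part is $\leq0$, one gets $\sup v\leq0$; similarly $\sup(-v)\leq0$, hence $v\equiv0$. Care is needed because $v$ need not attain its supremum on $\bR^d$, so the standard trick is to perturb by $\varepsilon(1+|x|^2)$ or to mollify and use that $\cL$ applied to a bounded $\frB^2$ function with vanishing generalized Laplacian-type part is controlled; this is routine but is the step requiring the most attention.

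\textbf{Main obstacle.} The genuinely delicate part is the uniform-in-$T$ a priori bound $\|u_T\|_m\leq N\|f\|_m$, but this is precisely Theorem \ref{theorem 08.11.3.1}, which I may assume. Granting that, the remaining real work is the exponential decay of the time-increments (needing $c_0>0$ and the maximum principle applied not just to $u$ but to all its spatial derivatives, which is where Assumptions \ref{assumption 11.22.11.06} and \ref{assumption 07.10.16.1} enter to absorb the commutator terms) and the careful handling of the maximum principle on the unbounded domain $\bR^d$ for a function that may fail to attain its extremum. Everything else is a limiting argument in the weak formulation \eqref{08.11.7.1}.
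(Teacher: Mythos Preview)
Your overall strategy—derive the elliptic result from the $T$-independent parabolic estimate of Theorem \ref{theorem 08.11.3.1}—is the same as the paper's, but the execution differs in both halves, and your uniqueness argument has a real gap.

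\smallskip
\emph{Existence.} You take the long-time limit $v=\lim_{t\to\infty}u(t,\cdot)$ of the solution to $\partial_t u=\cL u+f$, $u(0)=0$. The paper instead solves $\partial_t u=(\cL+\nu)u$, $u(0)=f$ for a small $\nu>0$ (chosen so that $c-\nu\geq c_0/2$ and Assumptions \ref{assumption 11.22.11.06}--\ref{assumption 07.10.16.1} persist), and sets $v(x)=\int_0^\infty e^{-\nu t}u(t,x)\,dt$. One integration by parts then gives $\cL v=-f$ directly, and the bound $\|v\|_m\leq N\|f\|_m$ drops out of Theorem \ref{theorem 08.11.3.1} with no further work. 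Your route can be made to work, but the decay step you label ``Gronwall-type estimate, $\sup|w(t,\cdot)|\leq e^{-c_0 t}\sup|w(0,\cdot)|$'' is not available as stated: the paper proves no maximum principle for the continuous operator $\partial_t-\cL$ (Lemma \ref{lemma 3.6.1} is for $L_h$), and for degenerate $\cL$ with merely $\frB^2$ solutions the classical argument does not apply. What does work is the paper's trick: with $\tilde w=e^{\nu t}w$ solving $\partial_t\tilde w=(\cL+\nu)\tilde w$, Theorem \ref{theorem 08.11.3.1} gives $\|\tilde w\|_m\leq N\|w(0,\cdot)\|_m$, hence $|D^\alpha w(t,\cdot)|\leq Ne^{-\nu t}\|f\|_m$ for $|\alpha|\leq m$. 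So your existence proof is salvageable, but only by importing the very device the paper uses.

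\smallskip
\emph{Uniqueness.} Here your sketch is genuinely incomplete. A direct elliptic maximum principle for a possibly fully degenerate $\cL$ on $\bR^d$, applied to a solution known only to lie in $\frB^2$, is not routine: the perturbation $v-\varepsilon(1+|x|^2)$ attains its sup, but at that point you cannot assert $D^2v\leq0$ pointwise (the second derivatives are only bounded measurable), and mollifying destroys $\cL v=0$ by commutators that are \emph{not} small when the coefficients are merely bounded. The paper sidesteps this entirely: given $v\in\frB^2$ with $\cL v=0$, set $u(t,x)=e^{\nu t}v(x)$. Then $u$ solves $\partial_t u=(\cL+\nu)u$ with $u(0)=v$, so Theorem \ref{theorem 08.11.3.1} yields $e^{\nu T}|v(x)|=|u(T,x)|\leq N\|v\|_2$ with $N$ independent of $T$; letting $T\to\infty$ forces $v=0$. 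This reduces elliptic uniqueness to the parabolic estimate you already have, and is both rigorous and short.
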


\begin{proof}        
First we prove uniqueness. Let $v\in\frB^{2}$ satisfy
\eqref{08.11.3.4} with $f=0$. Take a constant $\nu>0$, 
so small  
that $c-\nu\geq c_0/2$ and conditions 
\eqref{3.24.1} and \eqref{3.24.01}
 hold  with $c-\nu$ and $\delta/2$ in place
of $c$ and $\delta$, respectively. 
Then for each $T>0$ 
the function $u(t,x):=e^{\nu t} v(x) )$, 
  $(t,x)\in H_{T}$,  
is a  solution of class $\frB^{2}_{T}$ of
the equation
\begin{equation}                       \label{8.22.11.08}
\frac{\partial}{\partial t}u=(\cL +\nu)u 
\quad\text{on $H_T$}
\end{equation}
with initial condition 
$u(0,x)=v(x) $. 
Hence by virtue of Theorem \ref{theorem 08.11.3.1}
for every 
$T>0$ 
$$
e^{\nu T}|v(x) |=|u(T,x)|\leq N \|v \|_2,  
$$ 
where $N$ is independent of $(T,x)$.
 Multiplying both sides 
of the above inequality by $e^{-\nu T}$ and letting
$T\to\infty$ we get $v=0$, which proves   uniqueness.

To show the existence of a solution in $\frB^{m}$, 
let $u$ be a function
defined on 
$H_{\infty} $ 
such that for each $T>0$ its restriction 
onto $H_T$ is the unique 
solution in $\frB^{m}_{T}$ of \eqref{8.22.11.08} with
initial  condition $u(0,x)=f(x)$ 
(see Theorem \ref{theorem 08.11.2.1}).   
By Theorem \ref{theorem 08.11.3.1} 
$$
\sup_{H_{\infty}}
 \sum_{r\leq m}|D^{r}u|\leq N\|f\|_m 
$$
with a constant $N$ depending only on  
$m$, $ \delta$, $c_0$, $\tau_0$, $K $,
$M_0$, ..., $M_{m}$, $|\Lambda_{1}|$, and
$\|\Lambda_1\| $. 
Hence  
$$
v(x):=\int_{0}^{\infty}e^{-\nu t}u(t,x)\,dt, \quad x\in\bR^d 
$$
is a well-defined function on $\bR^d$, 
$v\in\frB^m$, and 
$$
\cL v(x)=\int_{0}^{\infty}e^{-\nu t}\cL u(t,x)\,dt
$$
$$
=
\int_{0}^{\infty}e^{-\nu t}
(\frac{\partial}{\partial t}u(t,x)-\nu u(t,x) )\,dt
=-f(x), 
$$
where the last equality is obtained by 
integration by parts. 
Consequently, $v$ is a solution of \eqref{8.22.11.08} 
and it satisfies  estimate 
\eqref{1.23.11.08}.
\end{proof}
 
\begin{theorem}                      \label{theorem 17.25.06.08}                
 Let $k\geq0$ and  suppose that   
Assumptions \ref{assumption 16.12.07.06} 
through \ref{assumption 07.10.16.1} are satisfied
with an $m\geq 3k$.  
Then, 
  for any $h\in(0,h_{0}]$
and 
for each integer $r\geq0$, such that
$$
3k+r\leq m,
$$
  for the unique bounded 
solution $v_h$ of \eqref{11.27.01.08} 
 we have
\begin{equation}                           \label{20.19.02.08}
\sup_{(0,h_{0}]\times\bR^{d}}|D^{r}D_{h}^{k} v _h |
\leq N\|f \|_{m},
\end{equation}
  where 
$N$ 
is a constant depending only on  
$m, \delta, c_0,\tau_0,K $, 
$|\Lambda_1|$, $\|\Lambda_1\|$, 
$M_{ 0}$, ..., $M_{m}$. 
In particular, 
$$
\|v_{h} \|_{m}\leq N\|f \|_{m}.
$$
\end{theorem}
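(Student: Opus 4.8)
The plan is to obtain the elliptic estimate \eqref{20.19.02.08} from the corresponding parabolic estimate in Theorem \ref{theorem 17.25.06.07} by the same resolvent-type device already used in the proof of Theorem \ref{theorem 1.22.11.08}. First I would fix $h\in(0,h_{0}]$ and, recalling Remark \ref{remark 07.9.18.8}, reduce to the case $c\geq c_{0}$ with $c_{0}$ as large as convenient (this is harmless because a gauge transformation $v_{h}\mapsto v_{h}e^{-2Ct}$ — or rather the time-independent analogue, a shift of $c$ — changes neither the form of \eqref{11.27.01.08} nor the constants in the way claimed, and since $f,p_{\lambda},q_{\lambda},c$ are $t$-independent here the shift is just $c\rightsquigarrow c+\text{const}$). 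Then pick $\nu>0$ small enough that $c-\nu\geq c_{0}/2$ and Assumptions \ref{assumption 11.22.11.06} and \ref{assumption 07.10.16.1} still hold with $c-\nu$ and $\delta/2$ in place of $c$ and $\delta$.

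Next, for each $T>0$ let $u=u_{h}$ be the unique bounded solution in $\frB^{m}_{T}(h_{1})$ of the finite-difference Cauchy problem with operator $L_{h}^{0}-(c-\nu)$, forcing term $f$, and initial data $g\equiv 0$ — or more precisely, the solution of \eqref{equation} with $c-\nu$ in place of $c$; existence, uniqueness and the joint $(x,h)$-smoothness of this solution are exactly what Remark \ref{remark 08.2.19.1} and Theorem \ref{theorem 17.25.06.07} provide. Applying Theorem \ref{theorem 17.25.06.07} to this problem (with the shifted $c$ and $\delta$, which only affects the constant $N$ through its allowed dependence on $\delta,c_{0}$) gives, for every $r$ with $3k+r\leq m$,
$$
\sup_{(0,h_{0}]\times H_{\infty}}|D^{r}D_{h}^{k}u_{h}(t,x)|\leq N\|f\|_{m},
$$
uniformly in $T$. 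Since the bound is uniform in $t$ and the problem is autonomous, one checks (as in Theorem \ref{theorem 1.22.11.08}) that $u_{h}(t,x)$ stabilizes and, setting
$$
v_{h}(x):=\int_{0}^{\infty}e^{-\nu t}u_{h}(t,x)\,dt,
$$
the dominated convergence theorem legitimizes differentiating under the integral sign in both $x$ and $h$, so that $|D^{r}D_{h}^{k}v_{h}|\leq \nu^{-1}N\|f\|_{m}$, and an integration by parts in $t$ (using that $u_{h}$ solves the evolution equation and $u_{h}(0,\cdot)=0$, together with the decay supplied by $e^{-\nu t}$ and the uniform bound on $u_{h}$) shows that $v_{h}$ solves $L_{h}v_{h}+f=0$, i.e. \eqref{11.27.01.08}. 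By the uniqueness part of Theorem \ref{theorem 1.20.11.08} this $v_{h}$ is \emph{the} bounded solution, and \eqref{20.19.02.08} follows, with $N$ depending only on the listed quantities; the final assertion $\|v_{h}\|_{m}\leq N\|f\|_{m}$ is the case $k=0$, summed over $r\leq m$.

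The main obstacle I anticipate is the same one flagged in the introduction: justifying that the constants in Theorem \ref{theorem 17.25.06.07} are genuinely independent of $T$ (so that the resolvent integral over $[0,\infty)$ converges with a bound not blowing up), and checking that differentiation in $h$ commutes with the $t$-integral — the latter requires the \emph{joint} $(x,h)$-regularity of $u_{h}$ on the strip $[h_{1},h_{0}]\times H_{T}$ from Remark \ref{remark 08.2.19.1} plus the $T$-uniform bounds, and then a separate short argument that the resulting bound is also independent of $h_{1}$ (hence valid on all of $(0,h_{0}]$), which is precisely the content of the last paragraph of Remark \ref{remark 08.2.19.1}. Everything else — the gauge shift, the choice of $\nu$, the integration by parts, and the appeal to uniqueness — is routine once Theorem \ref{theorem 17.25.06.07} is in hand.
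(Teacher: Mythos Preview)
Your approach is correct and follows the resolvent construction used in Theorems \ref{theorem 1.22.11.08} and \ref{theorem 4.20.11.08}, but the paper takes a different, shorter route: instead of building $v_{h}$ from a parabolic solution and then identifying it via uniqueness, the paper starts from the already-known $v_{h}$ (Theorem \ref{theorem 1.20.11.08}), sets $u(t,x):=e^{\nu t}v_{h}(x)$, observes that $u$ solves $\partial_{t}u=L_{h}^{0}u-(c-\nu)u+e^{\nu t}f$ with initial data $v_{h}$, and applies Theorem \ref{theorem 17.25.06.07} to obtain $e^{\nu T}|D^{r}D_{h}^{k}v_{h}|\leq Ne^{\nu T}\|f\|_{m}+N\|v_{h}\|_{m}$; dividing by $e^{\nu T}$ and letting $T\to\infty$ kills the second term.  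The paper's argument is more compact but hides a subtlety you avoid entirely: its initial data $v_{h}$ is $h$-dependent, so Theorem \ref{theorem 17.25.06.07} does not apply verbatim and one must implicitly know that $D_{h}^{k}v_{h}$ is finite (for $h$ bounded away from $0$) to absorb the corresponding term in the limit.  Your construction, with $h$-independent data $g\equiv0$ and forcing $f$, makes Theorem \ref{theorem 17.25.06.07} apply directly and the differentiation under the integral is cleanly justified.

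One small slip: with your setup ($g=0$, forcing $f$, operator $L_{h}+\nu$) the integration by parts yields $L_{h}\big(\int_{0}^{\infty}e^{-\nu t}u_{h}\,dt\big)=-f/\nu$, not $-f$, because $\int_{0}^{\infty}e^{-\nu t}\,dt=\nu^{-1}$; so the actual solution of \eqref{11.27.01.08} is $\nu\int_{0}^{\infty}e^{-\nu t}u_{h}\,dt$.  This is harmless for the estimate (it only changes $N$), but if you want the formula to come out without the factor, take initial data $f$ and zero forcing, exactly as in the proof of Theorem \ref{theorem 1.22.11.08}.
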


\begin{proof} 
 To prove \eqref{20.19.02.08},
take a constant 
$\nu>0$ as in the proof of
Theorem \ref{theorem 1.22.11.08},
define 
$u(t,x):=v_h(x)e^{\nu t} $, and observe that
  $u$ is the unique bounded solution of 
$$
\frac{\partial}{\partial t}u
=L^{0}_{ h }u-(c-\nu)u+e^{\nu t}f, 
 \quad u(0,x)=v_h(x) .
$$
By Theorem \ref{theorem 17.25.06.07} 
  for any $T>0$
$$
e^{\nu T}|D^rD_{h}^{k} v_h  (x)|
=|D^rD_{h}^{k}u (T,x)|
\leq
N e^{\nu T}\|f\|_{m}
+
N\|v_h\|_{m}, 
$$
where $N$ is a constant, depending only on 
$m, \delta, c_0,\tau_0,K $, 
$|\Lambda_1|$, $\|\Lambda_1\|$, 
$M_0$, ..., $M_{m}$. 
By multiplying the extreme terms by $e^{-\nu T}$
and letting $T\to\infty$, we get the result. 
\end{proof}

From  estimate \eqref{08.2.19.7} we obtain 
the corresponding estimate for the derivatives 
of $v_h$. 

\begin{theorem}
                                  \label{theorem 09.24.08.08}
Let the conditions of Theorem 
\ref{theorem 08.2.19.1} hold.   
Then 
for any   
integer $r\geq0$ such that
$$
3k+r\leq m-1,
$$
for the solution $v_h$ of \eqref{11.27.01.08}
we have 
\begin{equation*}
                                           \label{13.24.08.08}
\sup_{\bR^{d}}|D^{r}D_{h}^{k}v _{h}|
\leq N \|f\|_m h
\end{equation*}
for all $h\in(0,h_{0}]$, where $N$ depends only on 
$m, \delta, c_0,\tau_0,K $, 
$|\Lambda_1|$, 
 $\|\Lambda_1\|$  and $M_0$,..., $M_{m}$.
\end{theorem}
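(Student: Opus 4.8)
The plan is to deduce this estimate from its parabolic counterpart, Theorem \ref{theorem 08.2.19.1}, by the same exponential-weight device used in the proofs of Theorems \ref{theorem 1.22.11.08} and \ref{theorem 17.25.06.08}. First I would fix a constant $\nu>0$, depending only on the quantities listed in the statement, so small that $c-\nu\geq c_{0}/2$ and that Assumptions \ref{assumption 11.22.11.06} and \ref{assumption 07.10.16.1} still hold with $c-\nu$ and $\delta/2$ in place of $c$ and $\delta$; the existence of such a $\nu$ is exactly what is established at the start of the proof of Theorem \ref{theorem 1.22.11.08}. Since condition (S), \eqref{08.11.7.5}, and Assumption \ref{assumption 16.12.07.06} involve $c$ only through the constant $M_{0}$, they survive the shift $c\mapsto c-\nu$ unchanged up to a harmless change of $M_{0}$; Assumptions \ref{assumption 1.26.11.06} and \ref{assumption 08.11.2.1} are likewise unaffected (for the latter, $c-\nu\geq c_{0}/2>0$).

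Next, let $v_{h}$ be the unique bounded solution of \eqref{11.27.01.08}, which lies in $\frB^{m}$ by Theorem \ref{theorem 1.20.11.08}, and put $u(t,x):=e^{\nu t}v_{h}(x)$. A one-line computation together with uniqueness of bounded solutions shows that $u$ is the unique bounded solution on $H_{T}$ of \eqref{equation} with $L^{0}_{h}-(c-\nu)$ in place of $L_{h}$, forcing term $e^{\nu t}f(x)$, and initial value $v_{h}$; its data lie in $\frB^{m}_{T}$. As $k$ is odd, $m\geq 3k+1$, and this modified scheme satisfies all the hypotheses of Theorem \ref{theorem 08.2.19.1}, applying that theorem with $3k+r\leq m-1$ gives, for every $T>0$,
\begin{equation*}
e^{\nu T}|D^{r}D_{h}^{k}v_{h}(x)|=|D^{r}D_{h}^{k}u(T,x)|\leq Ne^{\nu T}\|f\|_{m}h+N\|v_{h}\|_{m}h,
\end{equation*}
where I used that $f$ is independent of $t$ and $\nu>0$ (so the $\frB^{m}_{T}$-norm of $e^{\nu t}f(x)$ equals $e^{\nu T}\|f\|_{m}$) and that the constant in Theorem \ref{theorem 08.2.19.1} does not depend on $T$. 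Dividing by $e^{\nu T}$ and letting $T\to\infty$ kills the term $N\|v_{h}\|_{m}he^{-\nu T}$ and leaves $|D^{r}D_{h}^{k}v_{h}(x)|\leq N\|f\|_{m}h$, with $N$, once traced through, depending only on $m,\delta,c_{0},\tau_{0},K,|\Lambda_{1}|,\|\Lambda_{1}\|$ and $M_{0},\dots,M_{m}$.

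This argument is short and is mostly bookkeeping once Theorem \ref{theorem 08.2.19.1} is in hand. The one step that calls for genuine care — the ``hard part'', such as it is — is checking that the shift $c\mapsto c-\nu$ preserves Assumptions \ref{assumption 11.22.11.06} and \ref{assumption 07.10.16.1} with constants controlled solely by the original data; that is precisely why I would import the choice of $\nu$ from the proof of Theorem \ref{theorem 1.22.11.08} rather than redo it.
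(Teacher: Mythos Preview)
Your proof is correct and follows exactly the approach the paper indicates: the paper's own proof simply says to deduce the result from Theorem \ref{theorem 08.2.19.1} in the same way that Theorem \ref{theorem 17.25.06.08} is obtained from Theorem \ref{theorem 17.25.06.07}, and you have spelled out precisely that exponential-weight argument with the $\nu$-shift from the proof of Theorem \ref{theorem 1.22.11.08}. Your handling of the norm of $e^{\nu t}f$ and the passage $T\to\infty$ matches the paper's computation in the proof of Theorem \ref{theorem 17.25.06.08}, now with the extra factor $h$ carried along from \eqref{08.2.19.7}.
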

\begin{proof}
This theorem can be deduced from Theorem  
\ref{theorem 08.2.19.1} in the same way as 
Theorem \ref{theorem 17.25.06.08} is obtained  
from Theorem \ref{theorem 17.25.06.07}. 
\end{proof}

Now we want to establish an expansion for 
$v_h$, i.e., to show for an integer $k\geq0$ 
the existence of some functions
 $v^{(0)}$,...,$v^{(k)}$  
on $\bR^d$, and a function 
$R_h$ on $\bR^d$ for each $h\in(0,h_0]$ 
such that 
for all $x\in\bR^d$ 
and $h\in(0,h_0]$ 
\begin{equation}                           \label{2.20.11.08}
v_h(x)=v^{(0)}(x)+\sum_{1\leq j\leq k}
\frac{h^j}{j!}v^{(j)}(x)+h^{k+1}R_h(x), 
\end{equation}
\begin{equation}                         \label{3.20.11.08}
\sup_{h\in(0,h_0]}\sup_{\bR^d}
|R_h|\leq N\|f\|_m\quad 
\end{equation}         
with a constant $N$. 
 
\begin{theorem}                    \label{theorem 4.20.11.08} 
Suppose that
Assumptions  \ref{assumption 16.12.07.06} through 
\ref{assumption 07.10.16.1} 
are satisfied with an $m\geq3$.   
Let $k\geq0$ be an integer. 
Then expansion  \eqref{2.20.11.08}   
holds  with $v^{(0)}$ being the unique  $\frB^{m}$  
solution of \eqref{08.11.3.4} and 
$R_{h}$ satisfying  
 \eqref{3.20.11.08} 
provided one of the following conditions is met:

(i) $m\geq3k+3$;
    
(ii) $m\geq2k+3$ and condition (S) holds;

(iii) $k$ is odd, $m\geq2k+2$, and conditions (S) 
and \eqref{08.11.7.5} are satisfied. 

\noindent
In each of the cases (i)-(iii) the constant  
  $N$ in \eqref{3.20.11.08}  
depends only   on $d$, $m$,   
$\delta, c_0,\tau_0,K $, 
$|\Lambda_1|$, $\|\Lambda_1\|$,
  $M_{ 0},\dots,M_m$. 
Moreover, when (iii) holds we have
$v^{(j)}=0$ for all odd $j$.
\end{theorem}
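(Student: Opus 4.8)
The plan is to derive the elliptic expansion from the parabolic one, Theorem~\ref{theorem 1.25.11.08}, by the Laplace-transform device already employed in the proof of Theorem~\ref{theorem 1.22.11.08}. First fix $\nu>0$ so small that $c-\nu\geq c_0/2$ and that Assumptions \ref{assumption 11.22.11.06} and \ref{assumption 07.10.16.1} remain valid with $c-\nu$ and $\delta/2$ in place of $c$ and $\delta$; this is possible exactly as in the proof of Theorem~\ref{theorem 1.22.11.08}, and the remaining assumptions are unaffected by the replacement (up to a harmless change of $M_0$). For each $T>0$ let $w_h$ be the unique bounded solution on $H_T$ of the finite difference Cauchy problem
$$
\partial_{t}w_h=L^0_hw_h-(c-\nu)w_h,\qquad w_h(0,\cdot)=f
$$
(Remark~\ref{remark 9.19.01.08}); these solutions are consistent in $T$, so define $w_h$ on $H_{\infty}$, and the maximum principle gives $\sup_{H_{\infty}}|w_h|\leq\|f\|_0$. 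Integrating this equation against $e^{-\nu t}$ over $(0,\infty)$ and integrating by parts, exactly as in the proof of Theorem~\ref{theorem 1.22.11.08}, shows that $\int_0^{\infty}e^{-\nu t}w_h(t,\cdot)\,dt$ is a bounded solution of \eqref{11.27.01.08}, hence, by the uniqueness in Theorem~\ref{theorem 1.20.11.08}, equals $v_h$.

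Next I would apply Theorem~\ref{theorem 1.25.11.08} to this $w_h$ with $l=0$: its data $f,0,c-\nu$ are independent of $h$, the corresponding limit operator is $\cL+\nu$, $m\geq3$, and the cases (i)--(iii) of the present theorem are precisely the cases (i)--(iii) of Theorem~\ref{theorem 1.25.11.08} with $l=0$. This yields, on $H_T$ for every $T$,
$$
w_h=w^{(0)}+\sum_{1\leq j\leq k}\frac{h^j}{j!}w^{(j)}+h^{k+1}r_h,
$$
with $w^{(0)}$ solving $\partial_{t}w^{(0)}=(\cL+\nu)w^{(0)}$, $w^{(0)}(0,\cdot)=f$, the $w^{(j)}$ solving the system \eqref{08.11.2.6} with $\cL$ replaced by $\cL+\nu$, with $w^{(j)}=0$ for odd $j$ in case (iii), and with $\sup_{H_T}|r_h|\leq N\|f\|_m$ and all $w^{(j)}$ bounded by $N\|f\|_m$, where (by Theorems~\ref{theorem 08.11.7.2} and \ref{theorem 08.11.3.1} together with the statement of Theorem~\ref{theorem 1.25.11.08}) the constant $N$ is independent of $T$. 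Thus $w^{(0)},\dots,w^{(k)}$ and $r_h$ are bounded on $H_{\infty}$ uniformly in $h$, and I may set
$$
v^{(j)}:=\int_0^{\infty}e^{-\nu t}w^{(j)}(t,\cdot)\,dt\quad(0\leq j\leq k),\qquad R_h:=\int_0^{\infty}e^{-\nu t}r_h(t,\cdot)\,dt.
$$
These are well defined, the $v^{(j)}$ are independent of $h$ and vanish for odd $j$ in case (iii), and $\sup_{\bR^d}|R_h|\leq\nu^{-1}\sup_{H_{\infty}}|r_h|\leq N\|f\|_m$, which is \eqref{3.20.11.08}; integrating the displayed expansion of $w_h$ against $e^{-\nu t}$ gives \eqref{2.20.11.08}. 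Since $\nu$, $\delta/2$ and $c_0/2$ depend only on the quantities permitted in the statement, so does this $N$.

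It remains to identify $v^{(0)}$ with the $\frB^m$ solution of \eqref{08.11.3.4}. Integrating by parts,
$$
\cL v^{(0)}=\int_0^{\infty}e^{-\nu t}\cL w^{(0)}\,dt=\int_0^{\infty}e^{-\nu t}\big(\partial_{t}w^{(0)}-\nu w^{(0)}\big)\,dt=-w^{(0)}(0,\cdot)=-f,
$$
so $v^{(0)}$ solves \eqref{08.11.3.4}; the $T$-independent bound on $\|w^{(0)}\|_m$ gives $v^{(0)}\in\frB^m$, and by the uniqueness in Theorem~\ref{theorem 1.22.11.08} this $v^{(0)}$ is the one named in the statement. (An identical computation shows each $v^{(j)}$ satisfies $\cL v^{(j)}+\sum_{i=1}^{j}C^i_j\cL^{(i)}v^{(j-i)}=0$, though this is not needed here.)

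The one genuinely delicate point is the passage from the expansions of $w_h$ on the slabs $H_T$ to a single expansion on $H_{\infty}$: this is exactly where the independence of $T$ of the constants in Theorem~\ref{theorem 1.25.11.08} (and in Theorems~\ref{theorem 08.11.7.2} and \ref{theorem 08.11.3.1} for the coefficients $w^{(j)}$, which in turn rests on Assumptions \ref{assumption 11.22.11.06} and \ref{assumption 07.10.16.1}) is indispensable, for otherwise $w^{(j)}$ and $r_h$ need not be bounded on $H_{\infty}$ and the Laplace integrals would diverge. All the interchanges of $\int_0^{\infty}e^{-\nu t}(\cdot)\,dt$ with $L^0_h$, with $\cL$, and with $\partial_{t}$ (the last via integration by parts) are justified by the uniform-in-$t$ boundedness of $w_h$, $\partial_{t}w_h$, $w^{(j)}$, $\cL w^{(j)}$ and $r_h$, precisely as in the proof of Theorem~\ref{theorem 1.22.11.08}; the rest is routine.
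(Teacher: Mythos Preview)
Your proof is correct and follows essentially the same route as the paper's: choose a small $\nu>0$, represent $v_h$ as the Laplace integral of the solution $w_h$ of the parabolic scheme with $c-\nu$ in place of $c$, apply the parabolic expansion to $w_h$, and integrate term by term. The only cosmetic difference is that you cite Theorem~\ref{theorem 1.25.11.08} with $l=0$ (whose constants are already $T$-independent), whereas the paper cites Theorem~\ref{theorem 6.25.08.08} and then invokes Theorem~\ref{theorem 08.11.3.1} to make the constants $T$-independent; these amount to the same thing.
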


\begin{proof}
Take a small constant $\nu>0$, 
as in the proof of
Theorem \ref{theorem 1.22.11.08},  
let $u$ be   a function 
defined on 
$H_{\infty} $ 
such that for each $T>0$ its restriction 
onto $H_T$ is the unique 
solution in $\frB^{m}_{T}$ of  
$$
\frac{\partial}{\partial t}u_h=(L_h+\nu)u_h 
\quad (t,x)\in H_{\infty}
$$
$$
u_h(0,x)=f(x)\quad x\in\bR^d ,
$$
(see Remark \ref{remark 9.19.01.08}). As in the proof of
Theorem \ref{theorem 1.22.11.08} we get that
$$
v_h(x) =\int_0^{\infty}e^{-\nu t}u_h(t,x)\,dt.
$$
By Theorem \ref{theorem 6.25.08.08} 
in each of the cases (i)-(iii) we have 
\begin{equation}                       \label{6.20.11.08}
u_h(t,x)=u^{(0)}(t,x)
+\sum_{1\leq j\leq k}\frac{h^j}{j!}u^{(j)}(t,x)
+h^{k+1}r_h(t,x), 
\end{equation}
for all $(t,x)\in H_{\infty}$, $h\in(0,h_0]$, 
and by Theorem  \ref{theorem 08.11.3.1}   
we have 
\begin{equation}                            \label{2.23.11.08}
\sup_{h\in(0,h_0]}\sup_{H_{\infty}}
\{|u_h|+\sum_{j=0}^{k}|u^{(j)}|+|r_h|\}\leq N\|f\|_m
\end{equation}
with a constant $N$ depending only on 
$d$, $m$,   
$\delta, c_0,\tau_0,K $, 
$M_0$,...,$M_m$, $|\Lambda_1|$ and $\|\Lambda_1\|$. 
Multiplying both sides of equation 
\eqref{6.20.11.08} by $e^{-\nu t}$ and then 
integrating them over $[0,\infty)$ with respect to $dt$,  
we get 
expansion \eqref{2.20.11.08} with 
$$
R_h(x):=\int_0^{\infty}e^{-\nu t}r_h(t,x)\,dt, 
$$
$$
v^{(j)}(x):=\int_0^{\infty}e^{-\nu t}u^{(j)}(t,x)\,dt, 
\quad \text{for $j=0,\dots,k$}.
$$
Clearly, \eqref{2.23.11.08} implies that 
\eqref{3.20.11.08} holds with $N$ depending only on 
$d$, $m$,   
$\delta, c_0,\tau_0,K $, 
$M_{ 0}$,...,$M_m$, $|\Lambda_1|$, and $\|\Lambda_1\|$. 
As we know the function $u^{(0)}$
in \eqref{6.20.11.08} is the
$\frB^{m}$ solution of 
$$
\frac{\partial}{\partial t}u=(\cL+\nu)u 
\quad (t,x)\in H_{\infty}, 
$$
$$
u(0,x)=f(x)\quad x\in\bR^d,  
$$ 
which as we have seen in the proof of 
Theorem \ref{theorem 1.22.11.08}
guarantees  
that $v^{(0)}$ is the unique $\frB^{m}$ solution 
of equation \eqref{08.11.3.4}.
\end{proof}
\begin{remark}
 We can show 
similarly that $v^{(i)}$, $i=1,...,k$, is the unique 
  solution of the system
$$
\cL v^{(j)}(s,x)
+\sum_{i=1}^{j}C^{i}_{j}\cL^{(i)}v^{(j-i)} 
 =0
$$
in an appropriate class of functions (cf. 
Theorem \ref{theorem 08.11.7.2}).
\end{remark}

The following result can be obtained easily from 
Theorem \ref{theorem 1.25.11.08} by inspecting the proof 
of the previous theorem.

\begin{theorem}                    \label{theorem 8.25.11.08}
Let $p_{\lambda}$, $q_{\lambda}$, $c$, and 
$f$ satisfy the conditions of 
Theorem \ref{theorem 4.20.11.08}, with $m-l$ in place of 
$m$ in each of the conditions (i)--(iii) for an integer 
$l\in[0,m]$. Then $D^{\alpha}v_h$ 
is a bounded continuous function 
on $\bR^d$ for every multi-index 
$\alpha$, $|\alpha|\leq l$, and the  
expansion \eqref{2.20.11.08} is valid 
with $D^{\alpha}v_h$, $\{D^{\alpha}v^{(j)}\}_{j=0}^k$ 
and $D^{\alpha}R_h$ in place of 
$v_h$, $\{v^{(j)}\}_{j=0}^k$ 
and $R_h$, respectively. 
Furthermore, 
\eqref{3.20.11.08} holds with $D^{\alpha}R_h$ 
in place of $R_h$ and a constant 
$N$  depending only   on $d$, $m$,   
$\delta, c_0,\tau_0,K $, 
$|\Lambda_1|$, $\|\Lambda_1\|$,
  $M_{0},\dots,M_m$. 
In case (iii) we have $v^{(j)}=0$ 
for all odd $j$ in the expansion. 
\end{theorem}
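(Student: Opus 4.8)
The plan is to mimic the proof of Theorem~\ref{theorem 4.20.11.08}, replacing each application of Theorem~\ref{theorem 6.25.08.08} by the corresponding application of Theorem~\ref{theorem 1.25.11.08}, and each application of Theorem~\ref{theorem 08.11.3.1} by the $T$-independent derivative estimates contained in Theorems~\ref{theorem 1.25.11.08} and~\ref{theorem 17.25.06.07}. Concretely, I would first fix $\nu>0$ small as in the proof of Theorem~\ref{theorem 1.22.11.08}, so that $c-\nu\ge c_0/2$ and conditions \eqref{3.24.1}, \eqref{3.24.01} persist with $c-\nu$, $\delta/2$ in place of $c$, $\delta$. Then I would introduce $u_h(t,x)$ as the (unique, bounded, $\frB^{m}_{T}$ for every $T$) solution of $\partial_t u_h=(L_h+\nu)u_h$, $u_h(0,\cdot)=f$, and record the identity $v_h(x)=\int_0^\infty e^{-\nu t}u_h(t,x)\,dt$ exactly as before. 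The parameter shift means one works with the operator having $c-\nu$ in place of $c$, and $m-l$ is the smoothness level at which Theorem~\ref{theorem 1.25.11.08} is invoked, matching the hypotheses (i)--(iii) as stated.

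The key steps, in order, are: (1) apply Theorem~\ref{theorem 1.25.11.08} to $u_h$ with the given multi-index $\alpha$, $|\alpha|\le l$, to get that $D^{\alpha}u_h$ is continuous on $H_\infty$ and admits the expansion \eqref{6.25.11.08}, with $D^{\alpha}r_h$ bounded by $N\|f\|_m$ uniformly in $(t,x)\in H_\infty$ and $h\in(0,h_0]$, the constant $N$ being independent of $T$ (here one uses that the $t$-independence of the data makes the estimate over $H_T$ uniform in $T$, hence valid on $H_\infty$; alternatively one invokes Theorem~\ref{theorem 17.25.06.07}-type $T$-free bounds for the shifted operator); (2) multiply \eqref{6.25.11.08} by $e^{-\nu t}$ and integrate $dt$ over $[0,\infty)$, differentiating under the integral sign (justified by the uniform boundedness of $D^{\alpha}u_h$, $D^{\alpha}u^{(j)}$, $D^{\alpha}r_h$ together with the $e^{-\nu t}$ factor), to obtain the expansion \eqref{2.20.11.08} with $D^{\alpha}$ applied throughout, and with $v^{(j)}(x)=\int_0^\infty e^{-\nu t}u^{(j)}(t,x)\,dt$, $R_h(x)=\int_0^\infty e^{-\nu t}r_h(t,x)\,dt$; (3) read off \eqref{3.20.11.08} with $D^{\alpha}R_h$ from the uniform bound on $D^{\alpha}r_h$ and $\int_0^\infty e^{-\nu t}\,dt=\nu^{-1}$; (4) identify $v^{(0)}$ as the $\frB^m$ solution of \eqref{08.11.3.4}, exactly as in the proof of Theorem~\ref{theorem 4.20.11.08} via the proof of Theorem~\ref{theorem 1.22.11.08}; (5) in case (iii), note $u^{(j)}=0$ for odd $j$ by Theorem~\ref{theorem 1.25.11.08}, hence $v^{(j)}=0$ for odd $j$.

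The boundedness and continuity of $D^{\alpha}v_h$ claimed in the statement follows either directly from the expansion (each term on the right is bounded and continuous: $D^{\alpha}v^{(j)}\in B(\bR^d)$ by Theorem~\ref{theorem 1.22.11.08} and the analogous system for $v^{(j)}$, and $D^{\alpha}R_h$ is bounded), or, more cleanly, from Theorem~\ref{theorem 17.25.06.08} applied at smoothness level $m$; I would cite the latter for the a priori bound $\|v_h\|_m\le N\|f\|_m$ with $N$ independent of $T$, and get continuity from the fact that $u_h\in\frB^m_T$ for each $T$ has continuous $x$-derivatives (Remark~\ref{remark 08.2.19.1}) and the integral representation.

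The main obstacle I anticipate is purely bookkeeping rather than conceptual: one must check that every constant produced by Theorem~\ref{theorem 1.25.11.08} and the auxiliary estimates is genuinely independent of $T$ (so that the limit $T\to\infty$, or equivalently the integral over $[0,\infty)$, is legitimate with a uniform bound) and that the smoothness budget matches — i.e., that invoking Theorem~\ref{theorem 1.25.11.08} with $m$ replaced by $m-l$ in hypotheses (i)--(iii) is exactly what is needed for $D^{\alpha}$ with $|\alpha|\le l$ and still leaves enough derivatives for the $r_h$-estimate. Since Theorem~\ref{theorem 1.25.11.08} already delivers $T$-free constants (its constants depend only on $d,m,\delta,K,\tau_0,c_0,|\Lambda_1|,\|\Lambda_1\|,M_0,\dots,M_m$), and the shift to $c-\nu$, $\delta/2$ only changes $\delta$ and $c_0$ by fixed factors, this is routine; there is no new analytic difficulty beyond what was already handled in Theorems~\ref{theorem 1.22.11.08} and~\ref{theorem 4.20.11.08}.
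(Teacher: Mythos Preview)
Your proposal is correct and is exactly the approach the paper indicates: the paper simply states that Theorem~\ref{theorem 8.25.11.08} ``can be obtained easily from Theorem~\ref{theorem 1.25.11.08} by inspecting the proof of the previous theorem,'' and you have faithfully carried out that inspection. The only cosmetic point is that you need not speak of ``invoking Theorem~\ref{theorem 1.25.11.08} with $m$ replaced by $m-l$'': the hypotheses of Theorem~\ref{theorem 8.25.11.08} (namely conditions (i)--(iii) of Theorem~\ref{theorem 4.20.11.08} with $m-l$ in place of $m$) are verbatim the hypotheses (i)--(iii) of Theorem~\ref{theorem 1.25.11.08} at level $m$ with the given $l$, so Theorem~\ref{theorem 1.25.11.08} applies directly.
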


Set 
\begin{equation*}
\bar{v}_{h}=\sum_{j=0}^{k}
b_jv_{2^{-j}h}\,, 
\quad 
\tilde{v}_{h}=\sum_{j=0}^{\tilde k}
\tilde b_jv_{2^{-j}h}\,,
\end{equation*}
where $(b_0, b_1,\dots,b_{k})$ 
and $\tilde k$, 
$( \tilde b_0, \tilde b_1,\dots,\tilde b_{\tilde k})$ 
are defined in \eqref{1.26.11.08} and in \eqref{9.25.11.08}. 
Then we have the following corollary.

\begin{corollary}                         
                                       \label{corollary 3.08.02.08}
Suppose that the assumptions of 
Theorem \ref{theorem 8.25.11.08} are satisfied. 
Then for every multi-index 
$\alpha$ with $|\alpha|\leq l$,
\begin{equation*}
\sup_{\bR^d}|D^{\alpha}\bar v_h-D^{\alpha}v^{(0)}|\leq N\|f\|_mh^{k+1}, 
\end{equation*}
and if condition (iii) is met then
\begin{equation*} 
\sup_{\bR^d}|D^{\alpha}\tilde v_h-D^{\alpha}v^{(0)}|\leq N\|f\|_mh^{k+1}, 
\end{equation*}
where $N$ depends only on 
on $d$, $m$, 
$\delta$, $K$, $\tau_0$, $c_0$,  
$|\Lambda_1|$, 
 $\|\Lambda_1\|$, 
  $M_{0},\dots,M_m$. 
\end{corollary}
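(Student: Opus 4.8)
The plan is to deduce this corollary from Theorem \ref{theorem 8.25.11.08} by the same Richardson-extrapolation argument used to prove Theorem \ref{theorem 1.08.02.08}, now applied to the expansion of $D^{\alpha}v_h$ rather than to $u^{(0)}$. First I would invoke Theorem \ref{theorem 8.25.11.08}: under the stated assumptions, for every multi-index $\alpha$ with $|\alpha|\leq l$ the function $D^{\alpha}v_h$ is bounded and continuous on $\bR^d$, and
\begin{equation*}
D^{\alpha}v_h(x)=D^{\alpha}v^{(0)}(x)+\sum_{1\leq j\leq k}\frac{h^j}{j!}D^{\alpha}v^{(j)}(x)+h^{k+1}D^{\alpha}R_h(x),
\end{equation*}
with $\sup_{h\in(0,h_0]}\sup_{\bR^d}|D^{\alpha}R_h|\leq N\|f\|_m$. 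Replacing $h$ by $2^{-j}h$ for $j=0,1,\dots,k$ gives
\begin{equation*}
D^{\alpha}v_{2^{-j}h}=D^{\alpha}v^{(0)}+\sum_{i=1}^{k}\frac{h^i}{i!\,2^{ij}}D^{\alpha}v^{(i)}+2^{-j(k+1)}h^{k+1}D^{\alpha}R_{2^{-j}h}.
\end{equation*}

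Next I would form the linear combination $D^{\alpha}\bar v_h=\sum_{j=0}^{k}b_jD^{\alpha}v_{2^{-j}h}$ and use the two identities $\sum_{j=0}^{k}b_j=1$ and $\sum_{j=0}^{k}b_j2^{-ij}=0$ for $i=1,\dots,k$, which hold by the definition \eqref{1.26.11.08} of $(b_0,\dots,b_k)$ via the inverse Vandermonde matrix. These identities make the constant term reproduce $D^{\alpha}v^{(0)}$ exactly and annihilate every intermediate coefficient $D^{\alpha}v^{(i)}$, leaving only the remainder:
\begin{equation*}
D^{\alpha}\bar v_h-D^{\alpha}v^{(0)}=h^{k+1}\sum_{j=0}^{k}b_j2^{-j(k+1)}D^{\alpha}R_{2^{-j}h}.
\end{equation*}
Taking the supremum over $\bR^d$ and using the uniform bound on $D^{\alpha}R_{2^{-j}h}$ yields $\sup_{\bR^d}|D^{\alpha}\bar v_h-D^{\alpha}v^{(0)}|\leq N\|f\|_m h^{k+1}$, with $N$ absorbing the fixed coefficients $\sum_j|b_j|2^{-j(k+1)}$, hence depending only on the listed quantities. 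For the second inequality, under condition (iii) Theorem \ref{theorem 8.25.11.08} also gives $v^{(j)}=0$ for all odd $j$, so in the expansion of $D^{\alpha}v_h$ only even powers of $h$ survive; repeating the computation with $\tilde b_j$ determined by the Vandermonde matrix $\tilde V^{ij}=4^{-(i-1)(j-1)}$ (the node $4^{-j}=2^{-2j}$ matching the even powers) and $\tilde k=(k-1)/2$ produces $\sup_{\bR^d}|D^{\alpha}\tilde v_h-D^{\alpha}v^{(0)}|\leq N\|f\|_m h^{k+1}$ by the same cancellation.

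Since the entire argument is a fixed finite linear-algebra manipulation of an already-established expansion, there is essentially no serious obstacle. The only point requiring a word of care is checking that for condition (iii) the reduced system $\tilde V$ indeed kills all the nonzero (even-indexed) coefficients up to order $k$: with only $\tilde k+1=(k+1)/2$ free parameters one must verify that annihilating the coefficients of $h^2,h^4,\dots,h^{k-1}$ is exactly what $(1,0,\dots,0)\tilde V^{-1}$ achieves, which is immediate from the Vandermonde structure with nodes $2^{-2j}$. Everything else — continuity and boundedness of $D^{\alpha}\bar v_h$ and $D^{\alpha}\tilde v_h$, and the dependence of $N$ — is inherited directly from Theorem \ref{theorem 8.25.11.08}.
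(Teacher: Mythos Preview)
Your proposal is correct and is exactly the argument the paper has in mind: the corollary is stated without proof as an immediate consequence of Theorem \ref{theorem 8.25.11.08}, obtained by applying to the expansion of $D^{\alpha}v_h$ the same Vandermonde cancellation used in the proof of Theorem \ref{theorem 1.08.02.08} (and its variant Theorem \ref{theorem 08.11.2.3} for case (iii)).
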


\mysection{Proof of uniqueness in Theorem
\protect\ref{theorem 08.11.2.1} and a stipulation}
                                           \label{section 08.12.17.1}

We will see later that the proof of 
Theorem \ref{theorem 6.25.08.08} 
only uses the existence of sufficiently smooth
solutions of \eqref{08.11.2.3} and \eqref{08.11.2.6}. Therefore,
if $m\geq3$, uniqueness of $u^{(0)}$ follows from expansion
\eqref{08.11.2.9}.  
If $m=2$,
one can use simple ideas based on integrating
by parts. We briefly outline these ideas
referring for details to \cite{Ol1}, \cite{Ol}, \cite{OR}.

First, one may assume that $g=f=0$ and let $u^{(0)}$ 
be the corresponding 
solution.
Then, by introducing a new function
$v=u^{(0)}(\cosh |x|)^{-1}$ one reduces the issue
to uniqueness of $v$, which satisfies
an equation similar to \eqref{08.11.2.1}
with $g=f=0$ and different coefficients which we denote by
$\hat{q}_{\lambda}$, $\hat{p}_{\lambda}$, 
and $\hat c=c$, and, moreover, 
$v,Dv,D^{2}v
\in L_{2}(H_{T})$. After that one multiplies the
equation for $v$ by $v$ and integrates
over $H_{T}$. One uses integration by parts, 
  and  the fact that
due to the assumption $q_{\lambda}\geq0$ we have
$|D\hat q_{\lambda}|^{2}\leq 4
\hat q_{\lambda}\sup|D^{2}\hat q_{\lambda}|$.
One also uses
  Young's inequality implying that
$$
|v(\partial_{\lambda}\hat q_{\lambda})\partial_{\lambda}
 v|\leq  N|v\hat q_{\lambda}^{1/2}
\partial_{\lambda}
v|\leq  \hat q_{\lambda}  
(\partial_{\lambda}v)^{2}+Nv^{2}, 
$$
and the fact that $2\hat{v}\hat p_{\lambda}
\partial_{\lambda} \hat{v}=
\hat p_{\lambda}
\partial_{\lambda} (\hat{v})^{2}$.
Then  one quickly arrives at a relation like
$$
\int_{H_{T}} (N-c)|v|^{2}\,dxdt
\geq\int_{\bR^{d}}|v(T,x)|^{2}\,dx\geq0,
$$
where $N$ is a constant   independent of $c$.
If $c$ is large enough, the above inequality is
only possible if $v=0$, which proves
uniqueness if $c$ is large enough. In the general case
it only remains to observe that the
usual change of the unknown function taking
$v(t,x)e^{\lambda t}$ 
in place of $v$ for an appropriate $\lambda$
will lead to as large $c$ as we like.

\begin{remark}
                                    \label{remark 08.11.7.1}
Notice that apart from uniqueness in Theorems
\ref{theorem 08.11.2.1} and \ref{theorem 08.11.7.2}
 all our other assertions
and assumptions are stable under applying mollifications
of the data with respect to $x$. For instance, take
a nonnegative $\zeta\in C^{\infty}_{0}(\bR^{d})$
with unit integral, for $\varepsilon>0$ define
$\zeta_{\varepsilon}(x)=\varepsilon^{-d}\zeta(x/\varepsilon)$
and for locally summable $\psi(x)$ use the notation
$$
\psi^{(\varepsilon)}=\psi*\zeta_{\varepsilon}.
$$
Then $q_{\lambda}^{(\varepsilon)},p_{\lambda}^{(\varepsilon)},
c^{(\varepsilon)},f^{(\varepsilon)}$, and $g^{(\varepsilon)}$
will satisfy the same assumptions with the same
constants as the original ones 
and will be infinitely differentiable in $x$.

It is not hard to see that if our assertions are true
for the mollified data, then they are also true for the original
ones. For instance, 
let $v^{\varepsilon}$ be the solution
of \eqref{08.11.2.1} with the new data. The uniform in 
$\varepsilon$ estimates of the derivatives in $x$ and the equation
itself, guaranteeing that the first derivatives in time 
are bounded, show that $v^{\varepsilon}$ are uniformly
continuous in $[0,T]\times\{|x|\leq R\}$ for any $R$. Then
there is a sequence $\varepsilon_{n}\downarrow0$
such that $v^{\varepsilon_{n}}$ converges
uniformly in $[0,T]\times\{|x|\leq R\}$ for any $R$
to a bounded continuous function~$v$. 

This along with uniform
boundedness of $|D^{\alpha}v^{\varepsilon}|$, $|\alpha|\leq m$,
lead  to the fact that the generalized derivatives
$|D^{\alpha}v |$, $|\alpha|\leq m$, are bounded and admit
the same estimates as those of $v^{\varepsilon}$.
Also since $D^{\alpha}v^{\varepsilon_{n}}\to
D^{\alpha}v $ in the sense of distributions and all
of them are uniformly bounded, we conclude that this convergence
is true in the weak sense in any
$L_{2}([0,T]\times\{|x|\leq R\})$. Now it is easy  to
pass to the limit
in equation
\eqref{08.11.7.1} written for modified coefficients
and $v^{\varepsilon}$ in place of $u$ concluding that
 since the derivatives 
converge weakly and $q^{(\varepsilon)}_{\lambda}
\to q_{\lambda}$,..., $f^{(\varepsilon)}\to f$ uniformly
on $H_{T}$, $v$ satisfies \eqref{08.11.7.1}.

Similar argument takes care of Theorem \ref{theorem 08.11.7.2}
(in which uniqueness will be derived from uniqueness
in Theorem \ref{theorem 08.11.2.1}).

Our claim about stability of other results is almost 
obvious and\smallskip

\noindent{\em from this moment on we will assume that
the data are as smooth in $x$ as we like.}

\end{remark}
  
\mysection{Proof of Theorems  
\ref{theorem 17.25.06.07} 
and \ref{theorem 08.2.19.1}}                         
                                         \label{section 3.7.1}

In \cite{GK1} (see there Theorems 2.3 and 2.1 
and  Corollary 3.2 if $m=0$)
 and 
\cite{GK2} we obtained the following result on the 
smoothness in $x$ of the solution $u_h$ to 
equation \eqref{equation}.

\begin{theorem}
                                          \label{theorem 4.7.2}

Suppose that Assumptions 
\ref{assumption 16.12.07.06} and
\ref{assumption 08.11.2.1} are satisfied. 
Suppose that
(i) if $m=1$, then Assumptions 
\ref{assumption 1.26.11.06} 
and \ref{assumption 11.22.11.06} are satisfied,
and (ii) if $m\geq2$, then
Assumptions 
\ref{assumption 1.26.11.06},
\ref{assumption 11.22.11.06},  
 \ref{assumption 07.10.16.1}, and \ref{assumption 18.12.07.06}  
are satisfied.
Then for $h\in(0,h_{0}]$
we have that $D^{k}u_{h}$, $k=0,...,m$, are continuous in $x$
and
\begin{equation}
                                             \label{4.8.03}
 \sup_{H_{T}}\sum_{k=0}^{m} 
|D^{k}u_h| \leq N 
 (F_{m}+G_{m}) ,
\end{equation}
where 
$$
 F_{n}
=\sum_{k\leq n} \sup_{H_{T}}
|D^{k}f_h| , 
\quad 
G_n=\sum_{k\leq n} \sup_{\bR^{d}}
|D^{k}g_h| ,
$$
  and $N$ 
depends only on   $\tau_{0}$,
$m$, $\delta$,  $c_0$, 
$K$, 
 $|\Lambda_1|$, $\|\Lambda_1\|$, $M_{0},...,M_{m}$
($N$ depends on fewer parameters if $m\leq1$). 
\end{theorem}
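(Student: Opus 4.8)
The plan is to prove \eqref{4.8.03} by a discrete energy estimate combined with the maximum principle; the constants $\tau_{0},\delta,c_{0},K,|\Lambda_{1}|,\|\Lambda_{1}\|,M_{0},\dots,M_{m}$ enter exactly through the ingredients below. By Remark \ref{remark 08.11.7.1} I may assume the data are $C^{\infty}$ in $x$; existence of $u_{h}$, its continuity, and boundedness of $D^{\alpha}u_{h}$ for $|\alpha|\le m$ then follow from the Banach-space ODE argument of Remarks \ref{remark 9.19.01.08} and \ref{remark 08.2.19.1}, so only \eqref{4.8.03} is at issue, for a single $h$ with an $h$-independent constant. The basic tool is the discrete carr\'e-du-champ identity $L_{h}^{0}(\psi^{2})=2\psi L_{h}^{0}\psi+\cQ_{h}(\psi)$, which holds because, under Assumption \ref{assumption 1.26.11.06}, $L_{h}^{0}\varphi(x)=\sum_{\lambda}\chi_{h,\lambda}(x)h^{-2}\big(\varphi(x+h\lambda)-\varphi(x)\big)$ is of balanced jump-kernel type. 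For the base estimate, $L_{h}^{0}$ preserves nonnegative functions and annihilates constants while $c\ge c_{0}>0$ (Assumption \ref{assumption 08.11.2.1}), so comparing $|u_{h}|$ with the spatially constant supersolution $e^{-c_{0}t}G_{0}+\int_{0}^{t}e^{-c_{0}(t-s)}F_{0}\,ds$ gives $\sup_{H_{T}}|u_{h}|\le N(c_{0})(F_{0}+G_{0})$; to legitimize taking the supremum over all of $\bR^{d}$ I would work with $u_{h}(\cosh|x|)^{-1}$, as in the uniqueness argument of Section \ref{section 08.12.17.1}, which moves a harmless bounded perturbation into the coefficients and forces an interior extremum. (The bare $m=0$ case is Corollary~3.2 of \cite{GK1}.)

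The substantial part is the estimate of $D^{k}u_{h}$ for $1\le k\le m$, which I would carry out by induction on $m$ using a weighted quadratic functional together with the maximum principle just described. It is convenient to work with the enlarged family $\bar\delta_{h,\lambda}$, $\lambda\in\Lambda=\Lambda_{1}\cup\Lambda_{2}$, which treats $x$-derivatives and finite differences on the same footing: controlling $\sum_{k\le m}|D^{k}u_{h}|$ is equivalent, up to constants depending on $\tau_{0}$ and $\|\Lambda_{1}\|$, to controlling $\sum_{n\le m}\sum_{\mu\in\Lambda^{n}}|\bar\delta_{h,\mu}u_{h}|$. Applying $\bar\delta_{h,\mu}$ to \eqref{equation} and using the discrete Leibniz rule, $\bar\delta_{h,\mu}u_{h}$ satisfies an equation of the same form with extra forcing built from $L^{0}_{h,\lambda}T_{h,\lambda}$ and $Q_{h,\lambda}T_{h,\lambda}$ applied to lower-order differences (plus terms in which $c,f,g$ are differenced); the carr\'e-du-champ identity then yields, at the first level,
\[
(\partial_{t}-L_{h})\Big(\sum_{\lambda\in\Lambda}|\bar\delta_{h,\lambda}u_{h}|^{2}\Big)=-c\sum_{\lambda}|\bar\delta_{h,\lambda}u_{h}|^{2}-\sum_{\lambda}\cQ_{h}(\bar\delta_{h,\lambda}u_{h})+A_{h}(u_{h})+(\text{lower order}),
\]
and the higher levels produce exactly the combination $\sum_{\nu}A_{h}(\bar\delta_{h,\nu}\varphi)+(n-1)\sum_{\lambda\in\Lambda^{2}}(\bar\delta_{h,\lambda}\varphi)Q_{h,\lambda}T_{h,\lambda}\varphi$. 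Assumptions \ref{assumption 11.22.11.06} and \ref{assumption 07.10.16.1} (using also Assumption \ref{assumption 18.12.07.06} so that the first-order drift produced when $Q_{h,\mu}$ hits the $q_{\lambda}$'s does not obstruct the cancellation) are precisely the inequalities \eqref{3.24.1} and \eqref{3.24.01} that dominate these ``bad'' terms by $(1-\delta)$ times the dissipation $\cQ_{h}$ one order up, plus $K$ times the dissipation at lower orders, plus $2(1-\delta)c\,\cK_{h}(\cdots)+K\cK(\cdots)$ with $\cK_{h}\in\frK$ sub-Markovian. Taking $w=\sum_{n=0}^{m}\varepsilon_{n}\sum_{\mu\in\Lambda^{n}}|\bar\delta_{h,\mu}u_{h}|^{2}$ with $1=\varepsilon_{0}\gg\varepsilon_{1}\gg\cdots\gg\varepsilon_{m}>0$ chosen successively small, the $K$-multiples of lower-order dissipation get absorbed by the larger $\varepsilon_{n-1}$-weighted $\cQ_{h}$-terms, the $(1-\delta)$-versus-$1$ gap closes the top-order $\cQ_{h}$-terms, and the $c\,\cK_{h}(\cdots)$-term is controlled against the $-cw$ coming from the $-c$ in $L_{h}$ using $\cK_{h}1\le1$ and $c\ge c_{0}$. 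What remains is $(\partial_{t}-L_{h})w\le-\tfrac{c_{0}}{2}w+N(F_{m}^{2}+G_{m}^{2})$ with $w(0,\cdot)\le NG_{m}^{2}$, and the maximum principle gives $\sup_{H_{T}}w\le N(F_{m}^{2}+G_{m}^{2})$, i.e.\ \eqref{4.8.03}.

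The hard part lies entirely in the second step, and it is the demand that all of this be uniform in $h$: each $\bar\delta_{h,\lambda}$ with $\lambda\in\Lambda_{1}$ carries a factor $h^{-1}$, so a naive expansion of $(\partial_{t}-L_{h})|\bar\delta_{h,\mu}u_{h}|^{2}$ produces terms of apparent size $h^{-2}$. The content of Assumptions \ref{assumption 11.22.11.06}--\ref{assumption 07.10.16.1}, together with the discrete bound $|\bar\delta_{h,\lambda}q_{\lambda}|^{2}\le N q_{\lambda}$ (the analogue of $|Dq_{\lambda}|^{2}\le 4q_{\lambda}\sup|D^{2}q_{\lambda}|$, valid because $q_{\lambda}\ge0$ has two bounded differences), is exactly that after summing over $\mu$ and pairing each such term against $\cQ_{h}$ or against $c_{0}$ times a lower-order square, the singular factors cancel. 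Verifying these discrete identities and checking that the successive choice of the $\varepsilon_{n}$ genuinely closes the differential inequality is where the real effort goes; the smoothing of the data, the $(\cosh|x|)^{-1}$-localization, and the $m=0$ base case are routine.
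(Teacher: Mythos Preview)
This theorem is not proved in the present paper at all: it is quoted verbatim from the authors' companion papers, with the sentence ``In \cite{GK1} (see there Theorems 2.3 and 2.1 and Corollary 3.2 if $m=0$) and \cite{GK2} we obtained the following result\ldots'' immediately preceding the statement. So there is no proof here to compare your proposal against.

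Your sketch is a reasonable high-level outline of the energy method that \cite{GK1} and \cite{GK2} presumably implement --- the discrete carr\'e-du-champ identity, the weighted quadratic functional $w=\sum_{n}\varepsilon_{n}\sum_{\mu}|\bar\delta_{h,\mu}u_{h}|^{2}$ with successively small $\varepsilon_{n}$, the absorption of cross terms via Assumptions \ref{assumption 11.22.11.06} and \ref{assumption 07.10.16.1}, and a final application of the maximum principle --- and the paper's own Remark \ref{remark 07.10.17.4} confirms that those assumptions are exactly the structural hypotheses engineered to make this closure work. But since the current paper treats Theorem \ref{theorem 4.7.2} as an external input, the honest comparison is simply that your proposal attempts what the paper does not: it sketches a proof, whereas the paper only cites one. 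Whether the details of your sketch match those in \cite{GK1}, \cite{GK2} (in particular the precise bookkeeping of the $\varepsilon_{n}$ and the handling of the $\cK_{h}$ terms) cannot be judged from this paper alone.
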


To proceed further we need a few formulas. 

\begin{lemma}                   \label{lemma 17.27.01.08}
Let $\varphi$ be a function on $H_T$ and
$n\geq0$ be an integer.  

(i) Assume that  
the derivatives of 
$\varphi$  in $x\in\bR^d$ 
up to   order $n+1$ are continuous functions in $x$. 
Then for each $h>0$ 
\begin{equation}
                                         \label{08.2.19.3}
D^{n}_{h}
\sum_{\lambda\in\Lambda_1}p_{\lambda}\delta_{h,\lambda}\varphi
=\sum_{\lambda\in\Lambda_1}p_{\lambda}
\int_0^1\theta^{n}\partial_{\lambda}^{n+1}
\varphi(t,x+h\theta\lambda)\,d\theta 
\end{equation}
on $H_T$, where 
 $
\partial_{\lambda}\varphi
$ 
is introduced in \eqref{08.12.16.2}. 

(ii) Assume that the derivatives of 
$\varphi$ in $x$  up to 
  order  $n+2$  
are continuous functions in $x$, and that 
Assumption \ref{assumption 14.23.01.08} 
holds. Then 
\begin{equation}                                \label{08.2.19.4}
D^{n}_{h}
\sum_{\lambda\in\Lambda_1}h^{-1}q_{\lambda}
\delta_{h,\lambda}\varphi
=\sum_{\lambda\in\Lambda_1}q_{\lambda}
\int_0^1(1-\theta)\theta^{n}\partial_{\lambda}^{n+2}
\varphi(t,x+h\theta\lambda)\,d\theta, 
\end{equation}
on $H_T$. 
\end{lemma}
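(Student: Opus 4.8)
The plan is to reduce both identities to a single elementary observation. For a function $\varphi$ whose first $x$-derivatives are continuous, writing $\varphi(x+h\lambda)-\varphi(x)=\int_0^1\frac{d}{d\theta}\varphi(x+h\theta\lambda)\,d\theta$ and identifying the $\theta$-derivative via \eqref{08.12.16.2} gives
$$
\delta_{h,\lambda}\varphi(x)=\tfrac1h\big(\varphi(x+h\lambda)-\varphi(x)\big)=\int_0^1\partial_{\lambda}\varphi(x+h\theta\lambda)\,d\theta .
$$
From here the only tool needed is differentiation under the integral sign together with the chain rule $D_h\big[\partial_{\lambda}^{j}\varphi(x+h\theta\lambda)\big]=\theta\,\partial_{\lambda}^{j+1}\varphi(x+h\theta\lambda)$. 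The interchange of $D_h$ and $\int_0^1$ is justified each time by noting that, for fixed $(t,x)$ and $h$ in a compact subinterval of $(0,\infty)$, the points $x+h\theta\lambda$, $\theta\in[0,1]$, stay in a fixed compact set on which the relevant $x$-derivatives of $\varphi$ are continuous, hence bounded; this is exactly where the smoothness hypotheses ($n+1$ derivatives in (i), $n+2$ in (ii)) enter.

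For part (i), I would induct on $n$: from $D_h^{n-1}\delta_{h,\lambda}\varphi=\int_0^1\theta^{n-1}\partial_{\lambda}^{n}\varphi(x+h\theta\lambda)\,d\theta$ (the case $n=0$ being the displayed identity) one more differentiation in $h$ produces the extra factor $\theta$ and raises the directional order by one, giving $D_h^{n}\delta_{h,\lambda}\varphi=\int_0^1\theta^{n}\partial_{\lambda}^{n+1}\varphi(x+h\theta\lambda)\,d\theta$. Multiplying by $p_{\lambda}(t,x)$, which does not depend on $h$, and summing over the finite set $\Lambda_1$ yields \eqref{08.2.19.3}.

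Part (ii) is the delicate one, because $\sum_{\lambda}h^{-1}q_{\lambda}\delta_{h,\lambda}\varphi=\sum_{\lambda}h^{-2}q_{\lambda}\big(\varphi(x+h\lambda)-\varphi(x)\big)$ carries a factor $h^{-2}$, and one must not differentiate this expression term by term. Instead I would first establish the case $n=0$ by using Taylor's formula with integral remainder to second order,
$$
\varphi(x+h\lambda)-\varphi(x)=h\,\partial_{\lambda}\varphi(x)+h^{2}\int_0^1(1-\theta)\partial_{\lambda}^{2}\varphi(x+h\theta\lambda)\,d\theta ,
$$
so that $\sum_{\lambda}h^{-1}q_{\lambda}\delta_{h,\lambda}\varphi=h^{-1}\sum_{i}\big(\sum_{\lambda}q_{\lambda}\lambda_i\big)D_i\varphi(x)+\sum_{\lambda}q_{\lambda}\int_0^1(1-\theta)\partial_{\lambda}^{2}\varphi(x+h\theta\lambda)\,d\theta$, and the singular term vanishes because $\sum_{\lambda}\lambda q_{\lambda}=0$ by Assumption \ref{assumption 14.23.01.08}. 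This is \eqref{08.2.19.4} for $n=0$. Its right-hand side has no negative powers of $h$, so I may now differentiate that identity $n$ times in $h$, once more moving $D_h$ under $\int_0^1$ and applying the chain-rule identity above, arriving at the general formula with $\theta^{n}\partial_{\lambda}^{n+2}$.

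The main obstacle is precisely this bookkeeping in (ii): one has to extract the first-order cancellation coming from Assumption \ref{assumption 14.23.01.08} \emph{before} taking any $h$-derivative, since the naive term-by-term differentiation of the $h^{-2}$-expression is meaningless for the stated conclusion. Everything else --- the induction in (i), the justification of differentiating under the integral via compactness and continuity of the top-order derivative, and the fact that $p_{\lambda}$ and $q_{\lambda}$ are $h$-independent --- is routine.
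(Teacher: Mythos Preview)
Your argument is correct and coincides with the paper's own proof: establish the $n=0$ identities via the first- and second-order Taylor formulas (using Assumption \ref{assumption 14.23.01.08} to remove the $h^{-1}$-singular term in (ii)), then differentiate $n$ times in $h$ under the integral. The only cosmetic difference is that you phrase (i) as an induction on $n$ whereas the paper simply says ``differentiate $n$ times,'' which amounts to the same thing.
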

\begin{proof} 
By  Taylor's formula applied
to $\varphi(t,x+h\theta\lambda)$ as a
function of $\theta\in
[0,1]$
$$
\delta_{h,\lambda}\varphi(t,x)=
\int_0^1\partial_{\lambda}\varphi(t,x+h\theta\lambda)\,d\theta
$$
and  
$$
\delta_{h,\lambda}\varphi(t,x)
=\partial_{\lambda}\varphi(t,x)+
h\int_0^1(1-\theta)
\partial_{\lambda}^2\varphi(t,x+h\theta\lambda)\,d\theta. 
$$ 
Multiplying the first equality by $p_{\lambda}$ 
and summing up in $\lambda$ over $\Lambda_1$ we obtain
\eqref{08.2.19.3} for $n=0$. 
Multiplying the second equality by $q_{\lambda}$,
 summing up in $\lambda$ over $\Lambda_1$
 we obtain
\eqref{08.2.19.4} for $n=0$
since 
$$
\sum_{\lambda\in\Lambda_1}
q_{\lambda}\partial_{\lambda}\varphi=0
$$
due to Assumption \ref{assumption 14.23.01.08}.

After that it only remains to differentiate $n$ times
in $h$
both parts of the particular case of formulas
 \eqref{08.2.19.3} and  \eqref{08.2.19.4}.
The lemma is proved.
\end{proof}

Introduce
$$
u^{(j)}_{h}=D^{j}_{h}u_{h}
$$
and observe that by Remark \ref{remark 08.2.19.1}
under Assumption \ref{assumption 16.12.07.06} the functions
$\partial^{n}_{\lambda}u^{j}$ are well defined
if $n+j\leq m$. 
 By combining this with Lemma \ref{lemma 17.27.01.08}
 and the Leibnitz formula
we obtain the following.
\begin{corollary}
                                  \label{corollary 08.2.19.1}
Let Assumptions \ref{assumption 16.12.07.06}
and \ref{assumption 14.23.01.08} be satisfied. 
Let  $k \geq1$ be an integer such that $k+2\leq m$.
Then
\begin{equation}
                                         \label{08.2.19.5}                                   
  u^{(k)}_{h}(t,x)=  
 \int_{0}^{t}\big(L_{h}  u^{(k)}_{h}(s,x)+R^{k}_{h}(s,x)
 \big)\,ds
\end{equation}
on $(0,h_{0}]\times H_{T}$,
where
$$
R^{k}_{h}(t,x)=\sum_{i=1}^{k}C^{i}_{k}
\sum_{\lambda\in\Lambda_{1}}\int_{0}^{1}\theta^{i}\big[
 p_{\lambda}(t,x)(\partial_{\lambda}^{i+1}u_{h}^{(k-i)})
(t,x+h\theta
\lambda)
$$
$$
+(1-\theta)q_{\lambda}(t,x) (
\partial_{\lambda}^{i+2}u_{h}^{(k-i)})
(t,x+h\theta
\lambda)\big]\,d\theta.
$$
\end{corollary}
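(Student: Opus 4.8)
The plan is to differentiate the integral equation \eqref{equation} for $u_h$ exactly $k$ times in $h$ and then identify the resulting source term using the two formulas of Lemma \ref{lemma 17.27.01.08}. First I would note that under Assumption \ref{assumption 16.12.07.06} and the hypothesis $k+2\le m$, Remark \ref{remark 08.2.19.1} guarantees that $u_h(t,x)$ is $m$-fold differentiable in the combined variables $(x,h)$ with $|\alpha|+3r\le m$ — and in fact, as noted there, the argument works with $|\alpha|+r\le m$ — so all the objects $\partial_\lambda^{\,n} u_h^{(j)} = \partial_\lambda^{\,n} D_h^{\,j} u_h$ that appear are genuine bounded continuous functions whenever $n+j\le m$; in particular $n+j\le k+2\le m$ covers every term below. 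This legitimizes differentiating under the integral sign in \eqref{equation} and interchanging $D_h$ with $\partial_\lambda$.

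Next I would split the operator $L_h = L_h^0 - c$ into its three pieces as in \eqref{1.24.11.08}--\eqref{08.12.16.1}: the $h^{-1}\sum q_\lambda \delta_{h,\lambda}$ part, the $\sum p_\lambda \delta_{h,\lambda}$ part, and the zeroth-order $-c\varphi$ part. Applying $D_h^k$ to \eqref{equation} and using that $g$, $f$, $c$, $q_\lambda$, $p_\lambda$ do not depend on $h$, the Leibniz rule gives
\begin{equation*}
u_h^{(k)}(t,x) = \int_0^t\Big( -c(s,x)u_h^{(k)}(s,x) + \sum_{i=0}^k C_k^i\, D_h^{k-i}\big(\tfrac1h\textstyle\sum_\lambda q_\lambda\delta_{h,\lambda}u_h^{(i)}\big)_{\!h\text{-part }i} + (\text{same for }p)\Big)\,ds,
\end{equation*}
but it is cleaner to organize it as follows: the $i=0$ term in each Leibniz sum reproduces $L_h^0 u_h^{(k)}$ (since $D_h^0$ acts trivially on the coefficient-free $h$-dependence only through $\delta_{h,\lambda}$ and $h^{-1}$), so together with the $-cu_h^{(k)}$ term we recover $L_h u_h^{(k)}$, and everything else is collected into $R_h^k$. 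Concretely, the remaining terms are $\sum_{i=1}^k C_k^i$ times $D_h^{k-i}$ applied to $\sum_\lambda p_\lambda\delta_{h,\lambda}u_h^{(i)}$ and to $h^{-1}\sum_\lambda q_\lambda\delta_{h,\lambda}u_h^{(i)}$ — wait, I must be careful with which index the Leibniz rule puts the $h$-derivatives on; it puts $D_h^{j}$ ($j=k-i$, ranging over $1,\dots,k$) on the $\delta_{h,\lambda}$/$h^{-1}$ factor and leaves $u_h^{(i)}$ with $i=k-j$ ranging over $0,\dots,k-1$. Reindexing by $i:=k-i$ in the final answer matches the statement: the source is $\sum_{i=1}^k C_k^i$ times $D_h^i$ of the difference operators applied to $u_h^{(k-i)}$.

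Then I would invoke Lemma \ref{lemma 17.27.01.08} directly: part (i) with $n=i$ and $\varphi = u_h^{(k-i)}$ evaluates $D_h^i\sum_\lambda p_\lambda\delta_{h,\lambda}u_h^{(k-i)}$ as $\sum_\lambda p_\lambda\int_0^1\theta^i\,\partial_\lambda^{i+1}u_h^{(k-i)}(t,x+h\theta\lambda)\,d\theta$, and part (ii) — which is where Assumption \ref{assumption 14.23.01.08} enters, via $\sum_\lambda q_\lambda\partial_\lambda u_h^{(k-i)}=0$ — evaluates $D_h^i\sum_\lambda h^{-1}q_\lambda\delta_{h,\lambda}u_h^{(k-i)}$ as $\sum_\lambda q_\lambda\int_0^1(1-\theta)\theta^i\,\partial_\lambda^{i+2}u_h^{(k-i)}(t,x+h\theta\lambda)\,d\theta$. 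Adding these with the binomial weights $C_k^i$ produces exactly the claimed expression for $R_h^k$, and the smoothness bookkeeping from the first paragraph confirms that $\varphi=u_h^{(k-i)}$ indeed has the $n+1$ (resp. $n+2$) continuous $x$-derivatives the lemma requires, since $(i+1)+(k-i)=k+1\le m$ and $(i+2)+(k-i)=k+2\le m$. I expect the only genuinely delicate point to be the clean separation of the ``$L_h u_h^{(k)}$ part'' from the remainder in the Leibniz expansion — in particular making sure the $h^{-1}$ prefactor is correctly attributed (it must travel with the $q_\lambda\delta_{h,\lambda}$ block, not be differentiated in isolation) — and verifying the final reindexing; the rest is a mechanical application of the preceding lemma.
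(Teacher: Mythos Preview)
Your proposal is correct and follows essentially the same route as the paper: the paper's one-line proof simply says to combine Remark \ref{remark 08.2.19.1} (for the regularity bookkeeping $n+j\le m$) with Lemma \ref{lemma 17.27.01.08} and the Leibniz formula, which is exactly what you do. Your more explicit handling of the Leibniz split---separating the $i=0$ contribution as $L_h u_h^{(k)}$ and applying parts (i) and (ii) of the lemma to the $i\ge1$ terms with $\varphi=u_h^{(k-i)}$---fills in precisely the details the paper leaves implicit.
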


Now we are ready to prove Theorems \ref{theorem 17.25.06.07} 
and \ref{theorem 08.2.19.1}.  

\bigskip
{\it Proof of Theorem \ref{theorem 17.25.06.07}}. 
If $m=2$ or $k=0$, our assertion follow directly from
Theorem \ref{theorem 4.7.2}. Therefore, in the rest of
the proof we assume that $m\geq3$ and $k\geq1$.

We will be using \eqref{08.2.19.5}. Observe that
if   $1\leq i\leq k$, then 
$$
(i+2)+r+(k-i)=k+2+r\leq 3k+r\leq m.
$$
Thus by Remark \ref{remark 08.2.19.1} we know that
$D^{i+2+r}u^{(k-i)}_{h}$ are bounded and continuous
on $H_{T}$. It follows that $R^{k}_{h}\in\frB^{  r }$.
 By  Theorem \ref{theorem 4.7.2}
with $r$ in place of $m$  we obtain
$$
I_{kr}:=\sup_{H_{T}}\sum_{j\leq r}|D^{j} u^{(k)}_h | 
\leq N
\sup_{H_{T}}\sum_{j\leq r}  |D^{j}R^{k}_{h} |. 
$$
It is not hard to see that 
$$
|D^{j}R^{k}_{h} |\leq N\sup_{H_{T}}
\sum_{i=1}^{k}\sum_{n=1}^{i+2+j}|D^{n}u_{h}^{(k-i)}| 
\leq N\sum_{i=1}^{k}I_{k-i,i+2+j}.
$$
Hence,
$$
I_{kr}\leq  N 
\sum_{i=1}^{k}I_{k-i,i+2+r}.
$$
Here on the right the first index of $I_{kr}$ is reduced
by at least  1   and the sum of indices
increased by 2. Therefore,   after
$k$ iterations we will come
to the inequality
$$
I_{kr}\leq NI_{0, k+2k+r}.
$$
 
It only remains to
observe that $I_{0,3k+r}\leq I_{0,m}$ and the latter
quantity is estimated in Theorem \ref{theorem 4.7.2}.
The theorem is proved.

\bigskip
{\it Proof of Theorem \ref{theorem 08.2.19.1}}.
First of all observe that the symmetry assumption
and
\eqref{08.11.7.5} imply that for any smooth function
$\varphi(x)$, odd $i\geq0$, and any multi-index
$\alpha$, such that $|\alpha|\leq m$, we have
\begin{equation}
                                               \label{08.2.20.2}
\sum_{\lambda\in\Lambda_{1}}(D^{\alpha}p_{\lambda})
\partial_{\lambda}^{i+1}\varphi
=\sum_{\lambda\in\Lambda_{1}}(D^{\alpha}q_{\lambda})
\partial_{\lambda}^{i+2}\varphi=0.
\end{equation}
If $k=1$ and an integer $n\leq r$, 
then owing to \eqref{08.2.20.2}
 $$
\big|D^{n}\sum_{\lambda\in\Lambda_{1}}q_{\lambda}(t,x)
(\partial_{\lambda}^{3}u_{h})(t,x+h\theta\lambda)\big|
$$
$$
=\big|D^{n}\sum_{\lambda\in\Lambda_{1}}q_{\lambda}(t,x)
\big[(\partial_{\lambda}^{3}u_{h})(t,x+h\theta\lambda)
-\partial_{\lambda}^{3}u_{h}(t,x)\big]\big|
$$
$$
\leq Nh\sup_{H_{T}}\sum_{i\leq r}|D^{ i+4}u_{h}|
\leq Nh\|u\|_{m}
\leq N (\|f\|_m+\|g\|_m) h=:NJh,
$$
where the last two estimates follow from the fact 
that $r+4=r+3k+1\leq m$ and from Theorem
\ref{theorem 17.25.06.07}, respectively.
Similarly,
$$
\big|D^{n}\sum_{\lambda\in\Lambda_{1}}p_{\lambda}(t,x)
(\partial_{\lambda}^{2}u_{h})(t,x+h\theta\lambda)\big|
$$
$$
=\big|D^{n}\sum_{\lambda\in\Lambda_{1}}p_{\lambda}(t,x)
\big[(\partial_{\lambda}^{2}u_{h})(t,x+h\theta\lambda)
-\partial_{\lambda}^{2}u_{h}(t,x)\big]\big|
\leq NJh.
$$
Hence,
$$
\sup_{H_{T}}\sum_{n\leq r}|D^{n}R^{1}_{h}|
\leq N (\|f\|_m+\|g\|_m) h\leq NJh
$$
and applying Theorem \ref{theorem 4.7.2}
to \eqref{08.2.19.5}  
 yields
\eqref{08.2.19.7}.

Now we proceed by induction on $k$. Assume that
for an odd number   $j$ estimate \eqref{08.2.19.7}
holds whenever $3k+r\leq m-1$ {\em and\/} odd $k\leq j$.
This hypothesis is justified by the above for $j=1$
and  to prove the theorem it suffices to show that
the hypothesis  also holds with $ j+2$ in place of $j$.
Take an odd $k$ and an integer $r$ such that
$$
k\leq j+2,\quad 3k+r\leq m-1
$$
and again use \eqref{08.2.19.5}. 
As above, to obtain \eqref{08.2.19.7} 
it suffices to prove
that
\begin{equation}
                                               \label{08.2.20.4}
\sup_{H_{T}}\sum_{n\leq r}|D^{n}R^{k}_{h}|
\leq  NJh.
\end{equation}
Take an integer $n\leq r$.
Observe that  if $1\leq i\leq k$ and $i$ is even, then
$k-i$ is odd and $k-i\leq j+2-i\leq j$ and
$$
3(k-i)+i+2+n=3k+n-2i+2  \leq m-1-2i+2\leq m-1
$$ 
so that by the induction hypothesis
\begin{equation}                              \label{08.2.20.6}
\sup_{H_{T}}\big|D^{n}\sum_{\lambda\in\Lambda_{1}} 
 q_{\lambda}(t,x) (
\partial_{\lambda}^{i+2} u_{h}^{(k-i)} )
(t,x+h\theta
\lambda) \big|
\leq NJh.
\end{equation}
If $1\leq i\leq k$ and $i$ is odd,
then $i+2$ is odd too and as in the beginning of the
proof
$$
\big|D^{n}\sum_{\lambda\in\Lambda_{1}} 
 q_{\lambda}(t,x) (
\partial_{\lambda}^{i+2} u_{h}^{(k-i)})
(t,x+h\theta
\lambda) \big|
$$
$$
=\big|D^{n}\sum_{\lambda\in\Lambda_{1}}  
q_{\lambda}(t,x) \big[(
\partial_{\lambda}^{i+2} u_{h}^{(k-i)})
(t,x+h\theta
\lambda)-\partial_{\lambda}^{i+2} u_{h}^{(k-i)}(t,x)
 \big]\big|
$$
$$
\leq Nh\sup_{H_{T}}\sum_{i\leq k,l\leq r }
|D^{l+i+3} u_{h}^{(k-i)}|,
$$
where the last sup is majorated by $NJ $
owing to Theorem \ref{theorem 17.25.06.07} since
$$
3(k-i)+r+i+3  \leq m-1-2i+3\leq m.
$$
In both situations we have \eqref{08.2.20.6}.
Similarly, if $1\leq i\leq k$ and $i$ is odd, then   
$i+1$ is even and
$$
\big|D^{n}\sum_{\lambda\in\Lambda_{1}} 
 p_{\lambda}(t,x) (
\partial_{\lambda}^{i+1} u_{h}^{(k-i)})
(t,x+h\theta
\lambda) \big|
$$
$$
=\big|D^{n}\sum_{\lambda\in\Lambda_{1}} 
p_{\lambda}(t,x) \big[(
\partial_{\lambda}^{i+1} u_{h}^{(k-i)})
(t,x+h\theta
\lambda)-\partial_{\lambda}^{i+1} u_{h}^{(k-i)}(t,x)\big]\big|
$$
$$
\leq Nh\sup_{H_{T}}\sum_{i\leq k,l\leq  r }
|D^{l+i+2} u_{h}^{(k-i)}|,
$$ 
where the last sup is majorated by $NJ $ again
owing to Theorem \ref{theorem 17.25.06.07} since
$$
3(k-i)+r+i+2  \leq m-1-2i+2\leq m.
$$
Finally, if $1\leq i\leq k$ and $i$ is even, then
$k-i$ is  odd, 
  $k-i\leq  j+2-i\leq j$, and
$$
3(k-i)+r+i+1 \leq m-1-2i+1< m-1,
$$
so that by the induction hypothesis
$$
\big|D^{n}\sum_{\lambda\in\Lambda_{1}} 
 p_{\lambda}(t,x) (
\partial_{\lambda}^{i+1} u_{h}^{(k-i)})
(t,x+h\theta
\lambda) \big|
\leq NJ h,
$$
which is now shown to hold in both subcases.
By combining this with \eqref{08.2.20.6}
we come to \eqref{08.2.20.4} and the theorem is
proved.

\mysection{Proof of Theorems \protect\ref{theorem 08.11.2.1},
\protect\ref{theorem 08.11.7.2}, and
\protect\ref{theorem 08.11.3.1}}
                                       \label{section 1.25.11.08}

\begin{proof}[Proof of Theorem \ref{theorem 08.11.2.1}]
First we replace $q_{\lambda}$ with symmetric ones
using the fact that the symmetrization
 does not affect formula \eqref{08.11.3.6}. 
To this end
introduce 
$$
\Lambda_{1}^{s}=\Lambda_{1}
\cap(-\Lambda_{1}),\quad
 \hat{\Lambda}_{1}=\Lambda_{1} \cup(-
\Lambda_{1}).
$$
 On $\Lambda_{1}^{s}$
 we set $\hat{q}_{\pm\lambda}
=(1/2)(q_{\lambda}+q_{-\lambda})$. If $\lambda
\in\pm(\Lambda_{1}\setminus\Lambda_{1}^{s})$ we set
$\hat{q}_{\lambda}=(1/2) q_{\pm\lambda}$. Then
$\hat{\Lambda}_{1}$ and $\hat{q}_{\lambda}$
satisfy the symmetry condition (S) and can be used
to represent the first term on the right in
\eqref{08.11.3.6} in place of the original ones. 
Next, we  redefine and extend $p_{\lambda}$  
introducing $\hat{p}_{\lambda}$
on $\hat{\Lambda}_{1}$, so that
$\hat{p}_{\lambda}=M_{0} +p_{\lambda}$ 
on $\Lambda_{1}^{s}$,
 for $\lambda
\in \Lambda_{1}\setminus\Lambda_{1}^{s} $ we set
 $\hat{p}_{\pm\lambda}=M_{0} \pm(1/2)p_{\lambda}$,
and for $-\lambda
\in \Lambda_{1}\setminus\Lambda_{1}^{s} $ we set
 $\hat{p}_{\pm\lambda}=M_{0} \mp(1/2)p_{-\lambda}$. 
 (Remember that for the constant $M_0$ 
from Assumption \ref{assumption 16.12.07.06} 
we have $|p_{\lambda}|\leq M_0$.) 
Then $\hat{\Lambda}_{1}$ and $\hat{p}_{\lambda}$
can be used to represent the second term 
on the right in
\eqref{08.11.3.6} in place of the original ones. 
One of  the  advantages 
of the new $\hat{p}_{\lambda}$ is that
 $\hat{p}_{\lambda}\geq0$, which implies that the new
$\chi_{\lambda}$ satisfies Assumption
\ref{assumption 1.26.11.06}.

Define $\tau_{\lambda}>0$ arbitrarily.
As in Remark 6.4 of \cite{GK1} and Remark 4.3 of \cite{GK2} 
one shows that 
Assumptions \ref{assumption 11.22.11.06}
and \ref{assumption 07.10.16.1} are also 
satisfied for any $\delta\in(0,1)$, say $\delta=1/2$, 
if $c$
is sufficiently large (independently of $h$) and
$\tau_0  >0 ,K$,  and $\cK$
are chosen appropriately and depending only on
$d,|\Lambda_{1}|,
 \|\Lambda_1\| , M_{0},M_{1},M_{2}$.
We first concentrate on the case that $c$ is indeed 
sufficiently large. In that case by Theorem 
\ref{theorem 17.25.06.07}, 
for $h\in(0,h_{0}]$, there exists
a unique solution $u_{h}(t,x)$ of class $\frB^{m}_{T}$
 satisfying equation
\eqref{equation} with $\hat{L}_{h}$ in place of
$L_{h}$, where $\hat{L}_{h}$ is constructed from
$\hat \Lambda_{1}$, $\hat q_{\lambda}$, and 
 $\hat p_{\lambda}$.
Furthermore,  
\begin{equation}
                                         \label{08.11.6.1}
\|u_{h}\|_{m}
\leq N(\|f \|_{m}+\|g \|_{m}),
\end{equation}
 where 
$N$ is a constant depending only on  
$m, \inf c$, 
$|\Lambda_1|$, 
$M_0$,..., $M_{m}$, and
 $\|\Lambda_1\| $. Upon observing that owing 
to Remark \ref{remark 08.11.6.1}
$$
|\hat{L}_{h}u_{h} |\leq N(
\sup_{H_{T}}|D^{2}u_{h} |+\sup_{H_{T}}|D u_{h} |+
\sup_{H_{T}}| u_{h} |) 
$$
with $N$ independent of $h$,
we conclude from the equation for $u_{h}$
that their first derivatives in $t$ are 
bounded uniformly in $h$.
Therefore, there exists
a sequence $h(n)\downarrow0$ such that
$u_{h(n)}$ converges uniformly on $ [0,T]\times
\{x:|x|\leq R\} $ for any $R$ to a 
continuous function $v$.  Then
\eqref{08.11.6.1} implies that $v\in \frB^{m}$
and 
\begin{equation}
                                         \label{08.11.6.3} 
\|v\|_{m}\leq N(\|f \|_{m}+\|g \|_{m}) 
\end{equation}
with the same $N$ as in \eqref{08.11.6.1}.
If we take $\tau_{\lambda}\equiv1$, then
Remark 6.4 of \cite{GK1} and Remark 4.3 of \cite{GK2}
  imply that
 both $N$'s can be chosen to
depend only on $d$, $m$, $\inf c$,
$|\Lambda_{1}|$, and
$M_{0},...,M_{m}$.

Next, the modified equation \eqref{equation}
yields that for any $\phi\in C^{\infty}_{0}(\bR^{d})$
and $t\in[0,T]$
$$
\int_{\bR^{d}}u_{h}(t,x)\phi(x)\,dx
=\int_{\bR^{d}}g( x)\phi(x)\,dx
$$
$$
+\int_{0}^{t} \int_{\bR^{d}}\sum_{\lambda\in\hat{\Lambda}_{1}}
u_{h}(s,x)[(1/2)\Delta_{h,\lambda}(\hat{q}_{\lambda}
\phi)+\delta_{h,-\lambda}(
 \hat{p}_{\lambda }\phi)](s,x)
 \,dxds
$$
$$
+\int_{0}^{t} \int_{\bR^{d}}(-cu_{h}+f)\phi (s,x)
\,dx\,ds. 
$$
We pass to the limit in this equation and find that
$v$ satisfies an integral equation, integrating 
by parts in which proves that $v$ is a solution of
\eqref{08.11.2.3}.

Finally, we notice that the case that $c$ is not large
is reduced to the above one
by usual change of the unknown function taking
$v(t,x)e^{\lambda t}$ 
in place of $v$ for an appropriate $\lambda$, which
leads to  subtracting $\lambda v$ from 
the right-hand side of \eqref{08.11.2.1}. 
For the new equation we then find
a solution admitting estimate
\eqref{08.11.6.3} with $N$ independent of $T$
but coming back to the solution of the original equation
will bring an exponential factor depending on $T$.

This and uniqueness proved in Section \ref{section 08.12.17.1}
finish proving the theorem.
\end{proof}

\begin{remark}
                                     \label{remark 08.11.6.4}
In the above proof we considered arbitrary $\tau_{\lambda}>0$
for the following reason. If
Assumptions 
\ref{assumption 16.12.07.06}
through \ref{assumption 07.10.16.1}
hold with $m\geq 2$, then by Theorem \ref{theorem 17.25.06.07}
estimate
\eqref{08.11.6.1}
and hence   \eqref{08.11.6.3} hold with $N$
depending only on 
$m, \delta, c_0,\tau_0,K $, 
$M_{ 0 }$, 
..., $M_{m}$, $|\Lambda_{1}|$, and
$\|\Lambda_1\| $.
This proves
  the assertion of Theorem \ref{theorem 08.11.3.1} 
regarding the constant $N$ in Theorem \ref{theorem 08.11.2.1}.

\end{remark}

\begin{proof}[Proof of Theorem \ref{theorem 08.11.7.2}]
Notice that for each $j= 1,\dots,k$   equation 
\eqref{08.11.2.6} 
  does not involve the unknown functions 
$u^{(l)}$ with indices $l>j$. Therefore we can solve 
\eqref{08.11.2.6}  and 
prove the statements (i) and (ii) recursively on 
$j$. 

First we prove that there is at most 
one solution $(u^{(1)},\dots,u^{(k)})$ in the space 
$\frB^{2}\times\dots\times\frB^{2}$. 
Denote
$$
S_{j}=\sum_{i=1}^{j}C_{j}^{i}\cL^{(i)}u^{(j-i)}.
$$
We may assume that $u^{(0)}=0$. Then 
clearly $S_1=0$ and by Theorem \ref{theorem 08.11.2.1} 
we have $u^{(1)}=0$. If for a $j\in\{2,\dots k\}$ 
we have $u^{(1)}=u^{(2)}=\dots =u^{(j-1)}=0$, then 
clearly $S_j=0$ 
which by Theorem \ref{theorem 08.11.2.1} 
yields $u^{(j)}=0$. Hence the statements on uniqueness follow 
because for every $j=1,2,\dots, k$ we obviously have  
$\frB^{m-3j}\subset \frB^{2}$ when $m\geq 3k+2$ 
and $\frB^{m-2j}\subset \frB^{2}$ when $m\geq 2k+2$.  

While dealing with the existence of a  
solution
first take $j=1$. 
Observe that by Theorem \ref{theorem 08.11.2.1} 
we have $u^{(0)}\in\frB^m$ with $m\geq 5$ in case (i) 
and with $m\geq4$ in case (ii). Thus in case (i) 
we have $S_1\in\frB^{m-3}\subset\frB^{2}$ and by 
Theorem
\ref{theorem 08.11.2.1} it follows that
there exists $u^{(1)}\in\frB^{m-3}$ satisfying
\eqref{08.11.2.6} and admitting the estimate
$$
\|u^{(1)}\|_{m-3}
\leq N\|u^{(0)}\|_{m}.
$$
Taking the estimate of the last term again from Theorem   
\ref{theorem 08.11.2.1} we obtain \eqref{08.11.7.3}
for $j=1$.
In  case  (ii)   we have actually 
better smoothness of
$S_{1}$, because the first sum in \eqref{08.11.2.5}
is zero for $i=1$ and, for that matter, for all odd $i$.
It follows that $S_{1}\in\frB^{m-2}$ and this leads to
\eqref{08.11.7.4}  for $j=1$ as above.
By adding that under the conditions (S) and
 \eqref{08.11.7.5} we have
$\cL^{(1)}=0$, $S_{1}=0$, and $u^{(1)}=0$, we obtain 
  \eqref{08.11.7.7}
for $j=1$.

Passing to higher $j$ we assume that $k\geq2$.   
Suppose that,
for a $j\in\{2,...,k \} $ 
we have found $u^{(1)}$,...,$u^{(j-1)}$
with the asserted properties.  Then in the case (i) we have 
$$
\cL^{(i)}u^{(j -i)}\in \frB^{m-3j} \subset\frB^{2} 
$$ 
for $i=1,\dots,j $, since 
$$
 m-3(j -i)-(i+2)=m-3j+2i-2\geq m-3j\geq 2.
$$
Hence $S_{j}\in \frB^{m-3j}$ 
and therefore by Theorem
\ref{theorem 08.11.2.1}  
there exists $u^{(j)}\in\frB^{m-3j}$ satisfying
\eqref{08.11.2.6} and admitting the estimate 
$$
\|u^{(j)}\|_{m-3j}
\leq N\sum_{i=1}^{j }
\|u^{(j-i)}\|_{m-3 j+3 }
$$
$$
\leq N\sum_{i=1}^{j }
\|u^{(j-i)}\|_{m-3 (j-i) }
\leq N(\|f\|_{m}+\|g\|_{m}),
$$
where the last inequality follows by the induction
hypothesis.

In case  (ii)   we 
take into account that due to condition (S) 
 we have 
\begin{equation}                              \label{1.25.08.08}
\sum_{\lambda\in\Lambda_{1}}q_{\lambda}
\partial_{\lambda}^{i+2}\varphi=0, 
\end{equation}                                
and due to condition \eqref{08.11.7.5} we have 
\begin{equation}                              \label{2.25.08.08}
\sum_{\lambda\in\Lambda_{1}}p_{\lambda}
\partial_{\lambda}^{i+1}\varphi=0
\end{equation}
for odd numbers $i$ and sufficiently smooth 
functions $\varphi$. 
It follows that in case  (ii)    for
$i=1,...,j$ we have
$$
\cL^{(i)}u^{(j-i)}\in \frB^{m-2j}
\subset\frB^{2} , 
$$ 
since $\cL^{(1)}u^{(j-1)}\in \frB^{m-2(j-1)-2}$ and for $i\geq2$
$$
m-2(j-i)-(i+2)=m-2j+i-2
\geq m-2j\geq2.
$$
Hence $S_{j}\in \frB^{m-2j}$ 
and therefore by Theorem
\ref{theorem 08.11.2.1}  
there exists 
$u^{(j)}\in\frB^{m-2j}$ satisfying
\eqref{08.11.2.6} and admitting the estimate 
$$
\|u^{(j)}\|_{m-2j}
\leq N\|u^{(j-1)}\|_{m-2 j+2}
+N\sum_{i=2}^{j}\|u^{(j-i)}\|_{m-2j+3}
$$
$$
\leq N\sum_{i=1}^{j}\|u^{(j-i)}\|_{m-2(j-i)}, 
$$ 
and by using the induction hypothesis we come
to \eqref{08.11.7.4}.   

Furthermore,
in case (ii) if \eqref{08.11.7.5} is satisfied,
our induction hypothesis says that
$u^{(l)}=0$ for all odd $l\leq j-1$.
If $j$ is even, then, obviously, $u^{(l)}=0$ 
for all odd $l\leq  j$ as well. If $j$ is odd then 
to carry the induction forward it only remains
to prove that $u^{(j)}=0$. However,
for odd $i$ we have 
$$
\cL^{(i)}u^{(j-i)}=0
$$
  due to \eqref{1.25.08.08}-\eqref{2.25.08.08}.
This equality also holds
if $i\geq2$ and $i$ is even, since then $j-i$
is odd and $u^{(j-i)}=0$ by assumption. Thus, $S_{j}=0$ and $u^{(j)}=0$.
\end{proof}

\begin{remark}
                                     \label{remark 08.11.8.1}
The above proof is based on 
Theorem \ref{theorem 08.11.2.1}
and leads to estimates \eqref{08.11.7.3} and \eqref{08.11.7.4}
with $N$ depending only on the same parameters as
in Theorem \ref{theorem 08.11.2.1}.
Therefore, according to Remark \ref{remark 08.11.6.4}
 if
Assumptions 
\ref{assumption 16.12.07.06}
through \ref{assumption 07.10.16.1}
are satisfied and the restrictions on $m$ and $k$
from Theorem \ref{theorem 08.11.7.2} are met,
then  the constants $N$ in
estimates \eqref{08.11.7.3} and \eqref{08.11.7.4}
depend  only on 
$m, \delta, c_0,\tau_0,K $, 
$M_0$, ..., $M_{m}$, $|\Lambda_{1}|$, and
$\|\Lambda_1\| $.
This proves the
 part of assertions of Theorem \ref{theorem 08.11.3.1}
concerning Theorem \ref{theorem 08.11.7.2}. The proof of
its remaining assertions can be obtained in the same way
and is left to the reader.

\end{remark}

 \mysection{Proof of Theorem \protect\ref{theorem 6.25.08.08} 
 and \protect\ref{theorem 1.25.11.08}} 
                                     \label{section 5.23.11.08}

We need  some lemmas.   
 The first one is a simple lemma from 
undergraduate calculus on Taylor's 
expansion. 
\begin{lemma}                                       \label{lemma Taylor}  
Let $F$ be 
a real-valued function on $(0,1]$ 
such that for an integer $m\geq 0$  
the derivative $F^{(m+1)}(h)$ of order $m+1$ exists   
for all $h\in (0,1]$,  and $F^{(m+1)}$ is a 
bounded function on $(0,1]$.  
Then 
\begin{equation*}                                         \label{15.15.07.06} 
F^{(k)}(0):=\lim_{s\downarrow0}F^{(k)}(s)  
\end{equation*}
exist for $0\leq k\leq m$, and 
\begin{equation*}                                          \label{16.15.07.06}
F(h)=\sum_{k=0}^{m}\frac{h^k}{k!}F^{(k)}(0)+R_m(h)
\end{equation*}
holds for $h\in[0,1]$ with 
$$
R_m(h)=\int_0^h\frac{(h-s)^{m}}{m!}F^{(m+1)}(s)\,ds,
$$
so that 
$$
|R_m(h)|\leq \sup_{s\in(0,1]}|F^{(m+1)}(s)|
\frac{h^{m+1}}{(m+1)!}\quad 
\text{\rm for all $h\in[0,1]$}. 
$$
\end{lemma}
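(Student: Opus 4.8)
The plan is to prove the lemma by induction on $m$, reducing everything to the fundamental theorem of calculus together with one basic fact: a function on $(0,1]$ with bounded derivative is Lipschitz, hence absolutely continuous, hence extends continuously to $0$ and satisfies the Newton--Leibniz formula down to the endpoint $0$.

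\emph{Base case $m=0$.} Here $F$ has a bounded derivative $F'$ on $(0,1]$, so $F$ is Lipschitz on every $[\varepsilon,1]$ and $F(h)-F(h')=\int_{h'}^{h}F'(s)\,ds$ for $0<h'<h\le1$. Since $F'$ is bounded and measurable, $\int_0^h F'(s)\,ds$ converges absolutely, so letting $h'\downarrow0$ shows that $F^{(0)}(0):=\lim_{s\downarrow0}F(s)$ exists and $F(h)=F^{(0)}(0)+\int_0^h F'(s)\,ds=F^{(0)}(0)+R_0(h)$; the bound $|R_0(h)|\le h\sup_{(0,1]}|F'|$ is immediate.

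\emph{Inductive step.} Assume the assertion for $m-1$ and let $F$ satisfy the hypotheses for some $m\ge1$. First I would apply the induction hypothesis to $G:=F'$, which satisfies the hypotheses for $m-1$ because $G^{(m)}=F^{(m+1)}$ is bounded; this yields that $F^{(k)}(0)=G^{(k-1)}(0)$ exists for $1\le k\le m$ and
$$F'(h)=\sum_{k=1}^{m}\frac{h^{k-1}}{(k-1)!}F^{(k)}(0)+\int_0^h\frac{(h-s)^{m-1}}{(m-1)!}F^{(m+1)}(s)\,ds.$$
In particular $F'$ is bounded on $(0,1]$, so the base-case argument gives that $F^{(0)}(0):=\lim_{s\downarrow0}F(s)$ exists and $F(h)=F^{(0)}(0)+\int_0^h F'(s)\,ds$. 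Substituting the displayed formula for $F'(s)$, integrating the polynomial part term by term, and evaluating the iterated integral of $F^{(m+1)}$ by Fubini's theorem,
$$\int_0^h\!\!\int_0^s\frac{(s-r)^{m-1}}{(m-1)!}F^{(m+1)}(r)\,dr\,ds=\int_0^h F^{(m+1)}(r)\,\frac{(h-r)^m}{m!}\,dr=R_m(h),$$
I obtain $F(h)=\sum_{k=0}^{m}\frac{h^k}{k!}F^{(k)}(0)+R_m(h)$, and then $|R_m(h)|\le\sup_{(0,1]}|F^{(m+1)}|\int_0^h\frac{(h-r)^m}{m!}\,dr=\sup_{(0,1]}|F^{(m+1)}|\,\frac{h^{m+1}}{(m+1)!}$.

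The only point requiring care is the passage to the limit at $0$: one must know that the improper integral of the merely bounded measurable function $F^{(m+1)}$ (and, one level down, of $F'$) converges and that $F$ and its lower-order derivatives extend continuously to the endpoint $0$, so that the symbols $F^{(k)}(0)$ are legitimately defined before the Taylor identity is written down. This is handled once, in the base case, via absolute continuity; after that the induction is pure bookkeeping and the remainder formula drops out of a single use of Fubini's theorem. So the ``hard part'' is really just this endpoint issue, and it is quite mild.
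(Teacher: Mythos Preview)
Your proof is correct. The paper does not actually give a proof of this lemma: it introduces it as ``a simple lemma from undergraduate calculus on Taylor's expansion'' and states the result without argument. Your induction on $m$, reducing to absolute continuity for the base case and applying the inductive hypothesis to $F'$ together with Fubini for the remainder, is exactly the kind of routine verification the authors are taking for granted; the one nontrivial point---that boundedness of $F^{(m+1)}$ forces each $F^{(k)}$ to extend continuously to $0$---is handled cleanly by your Lipschitz/absolute-continuity observation in the base case.
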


To formulate our next lemma we recall the operators 
$L_h$, $\cL$ and $\cL^{(i)}$, defined 
in \eqref{1.24.11.08}, \eqref{08.11.3.6}, and 
\eqref{08.11.2.5}, respectively, and 
for 
each $h\in(0,h_0]$ and integer $j\geq 0$ 
introduce  the operator 
$$
\mathcal O^{(j)}_h=L_h-\cL
-\sum_{1\leq i\leq j}\frac{ h^i}{i!}\cL^{(i)}.
$$

\begin{lemma}                           \label{lemma 3.25.08.08}
Let Assumption \ref{assumption 14.23.01.08} hold. 
Assume that for some integer $l\geq0$
the functions $p_{\lambda},q_{\lambda}$ 
belong to $\frB^l$ for all $\lambda\in\Lambda_1$. 
Then for  any integer $j\geq0$  
\begin{equation}                         \label{6.23.11.08}
\|\mathcal O^{(j)}_h\varphi\|_l\leq N
\|\varphi\|_{l+j+3}h^{j+1}
\end{equation}
for all $h\in(0,h_0]$ and  
$\varphi\in\frB^{l+j+3}$, where $N$ is a constant 
depending only on $ |\Lambda_1|, M_{0},...,M_{l}$.  
\end{lemma}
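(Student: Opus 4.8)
The plan is to reduce the estimate to a direct Taylor expansion of the finite-difference operators $\delta_{h,\lambda}$ and $h^{-1}\delta_{h,\lambda}$ in the parameter $h$, exactly as prepared by Lemma~\ref{lemma 17.27.01.08}. Recall that $L_h\varphi=\sum_{\lambda\in\Lambda_1}p_\lambda\delta_{h,\lambda}\varphi+h^{-1}\sum_{\lambda\in\Lambda_1}q_\lambda\delta_{h,\lambda}\varphi-c\varphi$, while $\cL\varphi=\sum_\lambda p_\lambda\partial_\lambda\varphi+\tfrac12\sum_\lambda q_\lambda\partial_\lambda^2\varphi-c\varphi$, and $\cL^{(i)}\varphi=\tfrac1{i+1}\sum_\lambda p_\lambda\partial_\lambda^{i+1}\varphi+\tfrac1{(i+1)(i+2)}\sum_\lambda q_\lambda\partial_\lambda^{i+2}\varphi$. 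So the $-c\varphi$ terms cancel in $\mathcal O_h^{(j)}\varphi$, and I only need to control the two sums coming from $p_\lambda$ and from $h^{-1}q_\lambda$ separately.

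First I would handle the $p_\lambda$ part. By Taylor's formula for the smooth function $\theta\mapsto\varphi(x+h\theta\lambda)$ on $[0,1]$, one has, for each $\lambda$,
$$
\delta_{h,\lambda}\varphi(x)=\sum_{i=0}^{j}\frac{h^i}{(i+1)!}\partial_\lambda^{i+1}\varphi(x)+\frac{h^{j+1}}{(j+1)!}\int_0^1(j+1)(1-\theta)^{j}\,\theta^{?}\cdots
$$
— more precisely, applying the integral form of the remainder, $\delta_{h,\lambda}\varphi(x)-\sum_{i=0}^{j}\tfrac{h^i}{(i+1)!}\partial_\lambda^{i+1}\varphi(x)$ equals a constant multiple of $h^{j+1}\int_0^1(1-\theta)^{j}\partial_\lambda^{j+2}\varphi(x+h\theta\lambda)\,d\theta$. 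Multiplying by $p_\lambda$ and summing over $\lambda\in\Lambda_1$ shows that the $p$-part of $\mathcal O_h^{(j)}\varphi$ is a sum over $\lambda$ of $p_\lambda$ times $h^{j+1}$ times an integral of $\partial_\lambda^{j+2}\varphi$ evaluated along the segment, because the term $\tfrac1{(i+1)!}h^i p_\lambda\partial_\lambda^{i+1}$ matches the $p$-part of $\cL$ for $i=0$ and of $\tfrac{h^i}{i!}\cL^{(i)}$ for $1\le i\le j$ (note $\tfrac1{i!}\cdot\tfrac1{i+1}=\tfrac1{(i+1)!}$). The sup-norm of order $l$ of this remainder is then bounded by $N h^{j+1}\sum_{\lambda}\|p_\lambda\|_l\,[\varphi]_{l+j+2}\le N h^{j+1}\|\varphi\|_{l+j+2}$, using Assumption~\ref{assumption 16.12.07.06} for the bound on $D^k p_\lambda$ and Leibniz's rule to distribute the $l$ derivatives; here I also use that differentiating $\partial_\lambda^{j+2}\varphi(x+h\theta\lambda)$ in $x$ just produces $D^\alpha\partial_\lambda^{j+2}\varphi$ at the shifted point.

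Next I would do the $h^{-1}q_\lambda$ part, which is where Assumption~\ref{assumption 14.23.01.08} enters. The Taylor remainder now must go one order further: $\delta_{h,\lambda}\varphi(x)=\partial_\lambda\varphi(x)+\sum_{i=1}^{j+1}\tfrac{h^i}{(i+1)!}\partial_\lambda^{i+1}\varphi(x)+c_{j}h^{j+2}\int_0^1(1-\theta)^{j+1}\partial_\lambda^{j+3}\varphi(x+h\theta\lambda)\,d\theta$ for a universal constant $c_j$. Summing $h^{-1}q_\lambda$ times this over $\lambda$, the leading term $h^{-1}\sum_\lambda q_\lambda\partial_\lambda\varphi$ \emph{vanishes} by Assumption~\ref{assumption 14.23.01.08} (which gives $\sum_\lambda\lambda q_\lambda=0$, hence $\sum_\lambda q_\lambda\partial_\lambda\varphi=0$). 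The surviving terms are $\sum_{i=1}^{j+1}\tfrac{h^{i-1}}{(i+1)!}\sum_\lambda q_\lambda\partial_\lambda^{i+1}\varphi$ plus the $h^{j+1}$ remainder; reindexing $i\mapsto i+1$ shows the $i=1$ term ($h^0$) is exactly the $q$-part of $\cL$ (since $\tfrac1{2!}=\tfrac12$), and the terms with $2\le i\le j+1$ give $\tfrac{h^{i-1}}{(i+1)!}\sum_\lambda q_\lambda\partial_\lambda^{i+1}\varphi$, which matches the $q$-part of $\tfrac{h^{i-1}}{(i-1)!}\cL^{(i-1)}$ because $\tfrac1{(i-1)!}\cdot\tfrac1{i(i+1)}=\tfrac1{(i+1)!}$. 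Thus again everything cancels against $\cL+\sum_{1\le i\le j}\tfrac{h^i}{i!}\cL^{(i)}$ and the $q$-part of $\mathcal O_h^{(j)}\varphi$ is $c_j h^{j+1}\sum_\lambda q_\lambda\int_0^1(1-\theta)^{j+1}\partial_\lambda^{j+3}\varphi(x+h\theta\lambda)\,d\theta$, whose $\|\cdot\|_l$ norm is at most $Nh^{j+1}\|\varphi\|_{l+j+3}$. Combining the two parts yields \eqref{6.23.11.08} with $N$ depending only on $|\Lambda_1|$ and $M_0,\dots,M_l$.

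The only real subtlety — and the step I would flag as the main obstacle — is bookkeeping: matching the coefficients $\tfrac{h^i}{i!}\cL^{(i)}$ against the Taylor coefficients of both $\delta_{h,\lambda}$ and $h^{-1}\delta_{h,\lambda}$ with the correct shift in the index $i$ (one shift for the $p$-terms, a different shift for the $q$-terms), and checking the factorial identities $\tfrac1{i!(i+1)}=\tfrac1{(i+1)!}$ and $\tfrac1{(i-1)!\,i(i+1)}=\tfrac1{(i+1)!}$ so that the cancellation is exact rather than approximate. Everything else is a routine application of the integral form of Taylor's remainder together with Leibniz's rule and the bounds from Assumption~\ref{assumption 16.12.07.06}; in particular no maximum principle or PDE theory is needed here, only the smoothness hypothesis $\varphi\in\frB^{l+j+3}$ and $p_\lambda,q_\lambda\in\frB^l$.
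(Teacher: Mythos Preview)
Your proposal is correct and follows essentially the same approach as the paper: both identify $\mathcal O_h^{(j)}\varphi$ as the Taylor remainder of $h\mapsto L_h\varphi$ at order $j+1$, using Assumption~\ref{assumption 14.23.01.08} to kill the singular $h^{-1}\sum_\lambda q_\lambda\partial_\lambda\varphi$ term. The only difference is organizational: the paper applies the abstract Taylor lemma (Lemma~\ref{lemma Taylor}) to $F(h)=L_h\varphi(t,x)$ and then invokes Lemma~\ref{lemma 17.27.01.08} for the explicit form of $D_h^{j+1}L_h\varphi$, thereby avoiding the explicit coefficient-matching that you carry out by hand --- but the computations are equivalent.
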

 
\begin{proof}
 We may assume that 
the derivatives in $x$ of $\varphi$ 
up to order $l+j+3$ 
are bounded continuous functions on $H_T$.
By Lemma \ref{lemma 17.27.01.08}  the derivatives 
of the function $L_h\varphi$ in $h$ up to  
the
$(l+j+1)$st order 
are bounded functions on $(0,h_0]\times H_T$ 
and        
\begin{equation*}                             \label{4.25.08.08}           
(\cL\phi)(t,x)=\lim_{h\to0}(L_h\varphi)(t,x),
\end{equation*} 
\begin{equation*}                             \label{5.25.08.08}  
(\cL^{(i)}\phi)(t,x)
=\lim_{h\to0}(D^i_hL_h\phi)(t,x). 
\end{equation*}
Thus applying Lemma \ref{lemma Taylor} 
to $F(h):=L_h\varphi(t,x)$ for 
fixed $(t,x)$ and using 
Lemma \ref{lemma 17.27.01.08}, we have 
$$
\mathcal O_h^{(j)}\varphi
=\int_0^h\frac{(h-\vartheta)^{j}}{j!}
L^{(j+1)}_{\vartheta}\varphi\,d\vartheta  
$$
$$
=\sum_{\lambda\in\Lambda_1}q_{\lambda}
\int_0^h\frac{(h-\vartheta)^{j}}{j!}\int_0^1
(1-\theta)\theta^{j+1}\partial_{\lambda}^{j+3}
\varphi(t,x+\vartheta\theta\lambda)\,d\theta
\,d\vartheta
$$
$$
+\sum_{\lambda\in\Lambda_1}p_{\lambda}
\int_0^h\frac{(h-\vartheta)^{j}}{j!}\int_0^1
\theta^{j+1}\partial_{\lambda}^{j+2}
\varphi(t,x+\vartheta\theta\lambda)\,d\theta
\,d\vartheta. 
$$
 Now estimate 
\eqref{6.23.11.08} follows easily. 
 \end{proof}

 The next lemma is a version 
of the maximum principle for $\partial/\partial t-L_{h}$. It
is a special  case of Corollary 3.2 in \cite{GK1}.

\begin{lemma}
                                        \label{lemma 3.6.1}
Let Assumption
\ref{assumption 16.12.07.06} with $m=0$ be satisfied and let
$\chi_{h,\lambda}\geq0$ for all $\lambda\in\Lambda_{1}$.
Let $v$ be a bounded
 function on $H_{T}$, such that 
the partial derivative $\partial v(t,x)/\partial t$
exists in $H_{T}$. 
Let   
$F$ be a nonnegative integrable function on $[0,T]$,
and let $C$ be a nonnegative bounded function on $H_T$ 
such that 
$$
\nu:=\sup_{H_{T}}(C-c)<0. 
$$ 
Assume that
  for all $(t,x)\in H_{T}$ we have
\begin{equation}
                                               \label{07.9.23.1}
 \frac{\partial}{\partial t}  v 
\leq  L_{h} v + C\bar{v}_{+}+F,
\end{equation}
where $\bar{v} (t)=\sup\{v (t,x):x\in \bR^d\}$.
Then in $[0,T]$ we have
\begin{equation}
                                        \label{3.11.1}
\bar{v}(t)\leq  
\bar v_{+}(0)
+|\nu|^{-1}\sup_{[0,t]}F,
\end{equation}
where $a_+:=(|a|+a)/2$ for real numbers $a$. 
\end{lemma}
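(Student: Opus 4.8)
\emph{The statement is, as noted, a special case of \cite[Corollary 3.2]{GK1}; I sketch a direct argument.} First I would recast \eqref{3.11.1} as the claim that $v(t,x)\le\phi(t)$ on $H_{T}$, where $\phi(t):=\bar v_{+}(0)+|\nu|^{-1}\sup_{[0,t]}F$; since $\phi\ge0$ this is equivalent to \eqref{3.11.1}, and $\phi$ is nondecreasing with $\phi(0)=\bar v_{+}(0)$ and $|\nu|\phi(t)\ge F(t)$ (if $\sup_{[0,T]}F=\infty$ the assertion is vacuous, so $\phi$ may be assumed bounded). Two structural facts then enter. First, since $\chi_{h,\lambda}=q_{\lambda}+hp_{\lambda}\ge0$, one has $L^{0}_{h}\psi(t,x)=h^{-2}\sum_{\lambda\in\Lambda_{1}}\chi_{h,\lambda}(t,x)(\psi(x+h\lambda)-\psi(x))$, so that $L^{0}_{h}\psi(t,x)\le0$ at any $x$ where $\psi(t,\cdot)$ attains its supremum over $\bR^{d}$. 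Second, $\nu<0$ forces $c\ge C+|\nu|\ge|\nu|>0$, i.e. $C\le c-|\nu|$.

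Next I would bring in a barrier $\Phi\ge1$ with $\Phi(x)\to\infty$ as $|x|\to\infty$ and $L^{0}_{h}\Phi\le K\Phi$ on $H_{T}$ for a constant $K$; the choice $\Phi(x)=\sum_{i=1}^{d}\cosh x_{i}$ works, the estimate being a consequence of $\cosh(a+b)=\cosh a\cosh b+\sinh a\sinh b$ and the bounds on $q_{\lambda},p_{\lambda}$. For $\varepsilon>0$ I would set $v^{\varepsilon}(t,x)=v(t,x)-\varepsilon e^{Kt}\Phi(x)$; since $L_{h}(e^{Kt}\Phi)\le Ke^{Kt}\Phi$, this $v^{\varepsilon}$ still satisfies $\partial_{t}v^{\varepsilon}\le L_{h}v^{\varepsilon}+C\bar v_{+}+F$ with the \emph{original} $\bar v_{+}$, and because $v$ is bounded while $\Phi$ is not, for each $t$ the supremum $m^{\varepsilon}(t):=\sup_{x}v^{\varepsilon}(t,x)$ is attained, $m^{\varepsilon}$ is continuous in $t$, and $m^{\varepsilon}(t)\uparrow\bar v(t)$ as $\varepsilon\downarrow0$. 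Suppose \eqref{3.11.1} failed, say $\bar v(t_{2})>\phi(t_{2})$; then $m^{\varepsilon}(t_{2})>\phi(t_{2})$ for small $\varepsilon$, and with $t_{3}$ the last time in $[0,t_{2}]$ at which $m^{\varepsilon}\le\phi$ one has $m^{\varepsilon}(t)>\phi(t)\ge0$ on $(t_{3},t_{2}]$, hence $\bar v_{+}(t)=\bar v(t)\ge m^{\varepsilon}(t)>0$ there. Evaluating \eqref{07.9.23.1} at a maximizer $x(t)$ of $v^{\varepsilon}(t,\cdot)$, combined with the envelope bound $\frac{d^{+}}{dt}m^{\varepsilon}(t)\le\partial_{t}v^{\varepsilon}(t,x(t))$, the sign $L^{0}_{h}v^{\varepsilon}(t,x(t))\le0$, the cancellation of the two $\varepsilon Ke^{Kt}\Phi(x(t))$ contributions, and the inequalities $C\le c-|\nu|$ and $v(t,x(t))\ge m^{\varepsilon}(t)$, I would obtain, on $(t_{3},t_{2}]$,
\begin{equation*}
\frac{d^{+}}{dt}m^{\varepsilon}(t)\le(\sup_{H_{T}}C)\,(\bar v(t)-m^{\varepsilon}(t))-|\nu|m^{\varepsilon}(t)+F(t).
\end{equation*}
Writing $m^{\varepsilon}=(m^{\varepsilon}-\phi)+\phi$ and using $|\nu|\phi\ge F$, $m^{\varepsilon}-\phi>0$, and the monotonicity of $\phi$, this reduces to $\frac{d^{+}}{dt}(m^{\varepsilon}-\phi)\le(\sup_{H_{T}}C)\,\rho^{\varepsilon}$ with $\rho^{\varepsilon}:=\bar v-m^{\varepsilon}\ge0$; integrating from $t_{3}$, where $m^{\varepsilon}-\phi$ vanishes, to $t_{2}$ gives $m^{\varepsilon}(t_{2})-\phi(t_{2})\le(\sup_{H_{T}}C)\int_{0}^{T}\rho^{\varepsilon}(t)\,dt$. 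Since $\rho^{\varepsilon}\downarrow0$ pointwise and is dominated by the integrable $\rho^{1}$, the right-hand side tends to $0$, whereas the left-hand side tends to $\bar v(t_{2})-\phi(t_{2})>0$ — the desired contradiction, which proves $v\le\phi$, that is, \eqref{3.11.1}.

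The hard part is precisely the coupling through $C\bar v_{+}$: because $\bar v_{+}(t)=\sup_{x}v_{+}(t,x)$ is a supremum over all of $\bR^{d}$, it is \emph{not} controlled by the value of $v$ at the maximizer $x(t)$ of the perturbed function $v^{\varepsilon}(t,\cdot)$ (the perturbation $-\varepsilon e^{Kt}\Phi$ shifts the argmax away from wherever $v$ is close to its supremum), so the estimate cannot be closed at fixed $\varepsilon$; the reconciliation comes only in the limit $\varepsilon\downarrow0$, through the $L^{1}$-convergence to zero of the gap $\rho^{\varepsilon}=\bar v-m^{\varepsilon}$. The remaining points are routine: the regularity of $t\mapsto m^{\varepsilon}(t)$ and the Danskin-type bound on its upper Dini derivative — harmless since in all applications $v$ is smooth — and the fact that $\phi$ need not be differentiable, which is why one works with upper Dini derivatives and uses only that $\phi$ is nondecreasing.
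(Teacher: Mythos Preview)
The paper does not prove this lemma at all; it simply records that it is a special case of \cite[Corollary~3.2]{GK1}. Your direct argument is a standard and essentially correct maximum-principle sketch: the barrier $\Phi=\sum_i\cosh x_i$ forces the supremum of $v^\varepsilon$ to be attained, $\chi_{h,\lambda}\ge0$ gives $L^0_h v^\varepsilon\le0$ at the maximizer, the two $\varepsilon Ke^{Kt}\Phi$ contributions do cancel, and the key algebraic step $-cv+C\bar v=(C-c)v+C(\bar v-v)\le-|\nu|m^\varepsilon+(\sup C)\rho^\varepsilon$ (valid on $(t_3,t_2]$ since there $v(t,x(t))\ge m^\varepsilon(t)>0$ and $\bar v-v(t,x(t))\le\bar v-m^\varepsilon$) is correct. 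The passage to the limit via dominated convergence of $\rho^\varepsilon\le\rho^1$, with $\rho^1$ bounded on $[0,T]$, then yields the contradiction. The only soft spot is the one you already flag: under the bare hypothesis that $\partial_t v$ merely exists, neither the continuity of $t\mapsto m^\varepsilon(t)$ nor the envelope inequality $D^+m^\varepsilon(t)\le\partial_t v^\varepsilon(t,x(t))$ is automatic, and without them the integration of the Dini inequality is formal. In the paper's uses of the lemma $v$ is always a solution of \eqref{equation} or of \eqref{6.25.08.08} and hence smooth in $t$ (cf.\ Remark~\ref{remark 08.2.19.1}), so your disclaimer is adequate for those applications; for the lemma exactly as stated one would either add a mild regularity hypothesis (e.g.\ joint continuity of $\partial_t v$) or defer to \cite{GK1}.
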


\begin{proof}[Proof of Theorem \ref{theorem 6.25.08.08}] 
By taking  $u_he^{-(M_0+1)t} $ in place of $u_h$, 
we may assume that $c\geq1$. 
Consider first the case $k=0$. 
Since $m\geq3$, by Theorem \ref{theorem 08.11.2.1} 
equation \eqref{08.11.3.6} has a 
solution $u^{(0)}$, which belongs to $\frB^m$ and 
estimate \eqref{08.11.3.2} holds. 
Clearly, $w:=u_h-u^{(0)}$ is the unique bounded solution 
of the equation  
\begin{equation}                                \label{6.25.08.08}
w(t,x)=\int_0^t\big( L_hw(s,x)+F(s,x)\big)\,ds, 
\quad (t,x)\in H_T, 
\end{equation}
 where 
$F:=\mathcal O_h^{(0)}u^{(0)}=L_hu^{(0)}-\cL u^{(0)}$. 
By Lemma \ref{lemma 3.25.08.08} and 
estimate \eqref{08.11.3.2} 
$$
\|\mathcal O^{(0)}_hu^{(0)}\|_0\leq N
\sum_{\lambda\in\Lambda_1}
(\|p_{\lambda}\|_0+\|q_{\lambda}\|_0)\|u^{(0)}\|_{3}h
\leq N(\|f\|_3+\|g\|_3)h
$$
with constants $N$ depending only on $d$, $|\Lambda_1|$ 
$M_0, M_1, M_3$, and $T$. After that
  an application of Lemma \ref{lemma 3.6.1} to 
equation \eqref{6.25.08.08} proves the statement 
of Theorem \ref{theorem 6.25.08.08} for $k=0$.

Let $k\geq1$. Then by Theorem \ref{theorem 08.11.7.2} 
the system of equations \eqref{08.11.2.6} has a 
bounded solution $\{u^{(i)}\}_{i=1}^k$. 
Observe that for  
\begin{equation}
                                              \label{08.12.16.3}
w:=u_h-\sum_{j=0}^{k}u^{(j)}\frac{h^j}{j!}  
\end{equation}
we have 
equation \eqref{6.25.08.08}
with
$$
F:=L_hu^{(0)}-\cL u^{(0)}
+\sum_{j=1}^{k}L_hu^{(j)}\frac{h^j}{j!}
-\sum_{j=1}^{k}\cL u^{(j)}\frac{h^j}{j!}-G, 
$$
  and
$$
G:=\sum_{j=1}^k\sum_{i=1}^{j}
\frac{1}{i!(j-i)!}\cL^{(i)}u^{(j-i)}h^j
=\sum_{i=1}^k\sum_{j=i}^{k}
\frac{1}{i!(j-i)!}\cL^{(i)}u^{(j-i)}h^j 
$$
$$
=\sum_{i=1}^k\sum_{l=0}^{k-i}
\frac{1}{i!l!}\cL^{(i)}u^{(l)}h^{l+i} 
=\sum_{l=0}^{k-1}\frac{h^{l}}{l!}
\sum_{i=1}^{k-l}\frac{h^i}{i!}\cL^{(i)}u^{(l)}
$$
$$
=\sum_{j=0}^{k}\frac{h^{j}}{j!}
\sum_{1\leq i\leq k-j} \frac{h^i}{i!}\cL^{(i)}u^{(j)}.
$$
Hence by simple arithmetics 
\begin{equation}
                                              \label{08.12.16.4}
F=\sum_{j=0}^k\frac{h^j}{j!} 
\mathcal O_h^{(k-j)}u^{(j)} .  
\end{equation}
Notice that 
$$
\text{$k-j+3\leq m-3j$ \,for $j=0,1,\dots,k$ \,in case (i)},  
$$
$$
\text{$k-j+3\leq m-2j$ \,for $j=0,1,\dots,k$ \,in case (ii)},  
$$
$$
\text{$k-j+3\leq m-2j$ \,for $j=0,1,\dots,k-1$ \,in case (iii)}.   
$$
Therefore by Theorem \ref{theorem 08.11.7.2} 
under each of (i), (ii), and (iii)
$$
\|u^{(j)}\|_{k-j+3}
\leq N(\|f\|_m+\|g\|_m)
$$
for $j=0,1\dots,k$ ($u^{(k)}=0$ in the case  (iii)).  
Thus by Lemma \ref{lemma 3.25.08.08} 
$$
 \|\mathcal O_h^{(k-j)}u^{(j)}\|_0 \leq 
Nh^{k-j+1} \|u^{(j)}\|_{k-j+3} 
\leq Nh^{k+1-j}(\|f\|_m+\|g\|_m).  
$$
Consequently,
\begin{equation*}
 \|F\|_{0}\leq N(\|f\|_m+\|g\|_m)h^{k+1}
\quad \text{for $h\in(0,h_0]$}, 
\end{equation*}
where  $N$ depends 
only   on $d$, $m$,   $ |\Lambda_1|  $, 
  $M_{ 0},\dots,M_m$, and $T$.
Hence we get \eqref{08.11.2.9} by 
Lemma \ref{lemma 3.6.1}, and the proof is   
complete. 
\end{proof}

\begin{proof}[Proof of Theorem \ref{theorem 1.25.11.08}]
Coming back to the above
  proof of Theorem \ref{theorem 6.25.08.08}
we see that function \eqref{08.12.16.3}
satisfies \eqref{6.25.08.08} with $F$ given by 
\eqref{08.12.16.4}. 
We notice that 
$$
\text{$k-j+3+l\leq m-3j$ \,for $j=0,1,\dots,k$
 \,in case (i)},  
$$
$$
\text{$k-j+3+l\leq m-2j$ \,for $j=0,1,\dots,k$ 
\,in case (ii)},  
$$
$$
\text{$k-j+3+l\leq m-2j$ \,for $j=0,1,\dots,k-1$ 
\,in case (iii)}.   
$$
Therefore 
by Theorem \ref{theorem 08.11.2.1}, 
when  $k=0$, and by Theorem \ref{theorem 08.11.7.2},    
when $k\geq1$, 
under each of (i), (ii), and (iii)
$$
\|u^{(j)}\|_{k-j+3+l}
\leq N(\|f\|_m+\|g\|_m)
$$
for $j=0,1\dots,k$ ($u^{(k)}=0$ in case (iii)). 
By Theorem \ref{theorem 08.11.3.1} the constant 
$N$ depends only on 
$m$, $\delta$, $c_0$, $\tau_0$, $K $, 
$M_{0}$, ..., 
$M_{m}$, $|\Lambda_{1}|$, and
$\|\Lambda_1\|$. 
By Lemma \ref{lemma 3.25.08.08}
$$
\|\mathcal O_h^{(k-j)}u^{(j)}\|_l\leq 
Nh^{k-j+1}\|u^{(j)}\|_{k-j+l+3}, 
$$
where $N$ is a constant depending only on 
 $|\Lambda_1|$, $M_0$,\dots $M_l$. 
Hence 
$$
\|F\|_l\leq N(\|f\|_m+\|g\|_m)h^{k+1}
\quad \text{for $h\in(0,h_0]$}. 
$$
Consequently, applying Theorem \ref{theorem 17.25.06.07} 
to equation \eqref{6.25.08.08},
for any multi-index $\alpha$, 
$|\alpha|\leq l$, for 
$$
r^{(\alpha)}_h:=h^{-(k+1)}
\big(D^{\alpha}u_h-\sum_{j=0}^{k}D^{\alpha}u^{(j)}
\frac{h^j}{j!}\big)
$$
we have 
$$
\|r^{(\alpha)}_h\|_{0}=
h^{-(k+1)}\|D^{\alpha}w\|_{0}
\leq N(\|f\|_m+\|g\|_m), 
$$
with a constant $N$ depending only on 
$m$, $d$, $\delta$, $c_0$, $\tau_0$, $K $, 
$M_{0}$, ..., 
$M_{m}$, $|\Lambda_{1}|$ and
$\|\Lambda_1\|$,   
which proves the theorem.
\end{proof}

\end{document}